\crefname{theorem}{theorem}{Theorems}
\Crefname{Theorem}{Theorem}{Theorems}
\newaliascnt{lemma}{theorem}
\crefname{lemma}{lemma}{lemmas}
\Crefname{Lemma}{Lemma}{Lemmas}
\newaliascnt{corollary}{theorem}
\crefname{corollary}{corollary}{corollaries}
\Crefname{Corollary}{Corollary}{Corollaries}
\newaliascnt{proposition}{theorem}
\crefname{proposition}{proposition}{propositions}
\Crefname{Proposition}{Proposition}{Propositions}
\crefname{figure}{figure}{figures}
\Crefname{Figure}{Figure}{Figures}
\newaliascnt{definition}{theorem}
\crefname{definition}{definition}{definitions}
\Crefname{Definition}{Definition}{Definitions}
\newtheorem{assumption}{\textbf{H}\hspace{-3pt}}
\Crefname{assumption}{H\hspace{-3pt}}{H\hspace{-3pt}}
\crefname{assumption}{H\hspace{-3pt}}{H\hspace{-3pt}}
\newtheorem{assumptionE}{\textbf{E}\hspace{-3pt}}
\Crefname{assumptionE}{E\hspace{-3pt}}{E\hspace{-3pt}}
\crefname{assumptionE}{E\hspace{-3pt}}{E\hspace{-3pt}}
\newcommand{\coint}[1]{\left[#1\right)}
\newcommand{\ocint}[1]{\left(#1\right]}
\newcommand{\ooint}[1]{\left(#1\right)}
\newcommand{\ccint}[1]{\left[#1\right]}
\def\pa{\upsilon}
\def\bu{\mathsf{a}}
\def\calH{\mathcal{H}}
\def\calC{\mathcal{C}}
\def\calW{\mathcal{W}}
\def\calL{\mathcal{L}}
\def\calA{\mathcal{A}}
\def\calB{\mathcal{B}}
\def\calK{\mathcal{K}}
\def\calP{\mathcal{P}}
\def\A{A}
\newcommand{\bs}{\boldsymbol}
\newcommand{\eqdef}{\ensuremath{\stackrel{\mathrm{def}}{=}}}
\def\eqsp{} 
\def\as{\ensuremath{\mathrm{a.s.}}}
\newcommand{\F}{\mathcal{F}}
\newcommand{\PE}{\mathbb{E}}
\newcommand{\PP}{\mathbb{P}}
\newcommand{\X}{\mathsf{X}}
\newcommand{\Xfield}{\mathcal{X}}
\newcommand{\thetaset}{\Theta}
\newcommand{\cgeom}{\lambda}
\newcommand{\rmd}{\mathrm{d}}
\def\ints{\mathrm{int}}
\def\mcf{\mathcal{F}}
\newcommand{\pas}{\gamma}
\newcommand{\dimpar}{d}
\def\Rset{\mathbb{R}}
\def\rset{\mathbb{R}}
\def\Nset{\mathbb{N}}
\def\un{\mathbbm{1}}
\newcommand{\indic}[1]{\mathbbm{1}_{\{#1\}}}
\newcommand{\param}{\theta}
\newcommand{\btheta}{{\bf \theta}}
\newcommand{\quant}{q}
\newcommand{\incr}{t}
\DeclareMathOperator*{\argmin}{arg\,min}
\newcommand{\norm}[1]{\left| #1 \right|}
 \newcommand{\pscal}[2]{\left< #1, #2
  \right>} \newcommand{\suite}[1] {\left\{ #1_n, n \in \Nset \right\}}
\renewcommand{\H}[2]{\ifthenelse{\equal{#2}{}}{H_{#1}}{H_{#1}(#2)}}
\newcommand{\tH}[2]{\ifthenelse{\equal{#2}{}}{\widetilde{H_{#1}}}{\widetilde{H_{#1}}(#2)}}
\renewcommand{\P}[1]{P_{\param_{#1}}}
\newcommand{\g}[1]{g_{\param_{#1}}}
\def\ie{\emph{i.e.}}
\def\eg{e.g.}
\def\iid{i.i.d.}
\def\wrt{w.r.t.}
\newcommandx\sequence[3][2=,3=]
\def\argmax{\mathrm{argmax}}
\begin{document}
\title{Convergence of Markovian Stochastic Approximation with discontinuous dynamics}
\author{  G. Fort \footnotemark[1] \footnotemark[2]
\and E. Moulines \footnotemark[1]
\and A. Schreck \ \footnotemark[1]
\and M. Vihola \footnotemark[3]}

\maketitle

\slugger{sicon}{xxxx}{xx}{x}{x--x}

\renewcommand{\thefootnote}{\fnsymbol{footnote}}

\footnotetext[2]{corresponding author. mail: gersende.fort@telecom-paristech.fr}
\footnotetext[1]{LTCI ; T\'el\'ecom ParisTech \& CNRS}
\footnotetext[3]{University of Jyv\"askyl\"a ; Department of Mathematics and Statistics}

\renewcommand{\thefootnote}{\arabic{footnote}}

\begin{abstract}
  This paper is devoted to the convergence analysis of stochastic approximation
  algorithms of the form $\theta_{n+1} = \theta_n + \gamma_{n+1}
  \H{\theta_n}{X_{n+1}}$ where $\suite{\theta}$ is a $\Rset^d$-valued sequence,
  $\suite{\gamma}$ is a deterministic step-size sequence and $\suite{X}$ is a
  controlled Markov chain. We study the convergence under weak assumptions on
  smoothness-in-$\theta$ of the function $\theta \mapsto \H{\theta}{x}$. It is
  usually assumed that this function is continuous for any $x$; in this work,
  we relax this condition. Our results are illustrated by considering
  stochastic approximation algorithms for (adaptive) quantile estimation and a
  penalized version of the vector quantization.
\end{abstract}

\begin{keywords} Stochastic approximation, discontinuous dynamics, state-dependent noise, controlled Markov chain.  \end{keywords}

\begin{AMS} 62L20, secondary: 90C15, 65C40\end{AMS}

\pagestyle{myheadings}
\thispagestyle{plain}
\markboth{G. FORT ET~AL.}{CONVERGENCE OF DISCONTINUOUS MARKOVIAN SA}

\section{Introduction}

Stochastic Approximation (SA) methods have been introduced by
\cite{robbins:monro:1951} as algorithms to find the roots of
$h:\thetaset \to \Rset^\dimpar$ where $\thetaset$ is an open subset of
$\Rset^{\dimpar}$ (equipped with its Borel $\sigma$-field
$\mathcal{B}(\thetaset)$) when only noisy measurements of $h$ are
available. More precisely, let $\X$ be a space equipped with a
countably generated $\sigma$-field $\Xfield$, $\{P_{\param}, \param
\in \thetaset \}$ be a family of transition kernels on $(\X, \Xfield)$
and $H: \X \times \thetaset \to \Rset^{\dimpar}$, $(x,\param) \mapsto
\H{\param}{x}$ be a measurable function. We consider
\begin{equation}
  \label{eq:AS:generaldefinition}
  \param_{n+1} = \param_n + \pas_{n+1} \H{\param_n}{X_{n+1}}
\end{equation}
where $\suite{\pas}$ is a sequence of deterministic nonnegative step sizes and
$\suite{X}$  is a controlled  Markov chain, \ie, for any non-negative measurable function $f$,
\[
\PE[f(X_{n+1})\vert \mcf_n]= P_{\param_n} f(X_{n}) \eqsp, \quad \PP-\as\  \eqsp, \quad \mcf_n= \sigma( (X_\ell,\param_\ell), \ell \leq n) \eqsp.
\]
It is assumed that for each $\param \in \thetaset$, $P_\param$ admits a unique
stationary distribution $\pi_\param$ and that $h(\param)= \int_{\X}
\H{\param}{x} \pi_\param(\rmd x) = \pi_\param(\H{\param}{})$ (assuming that
$\pi_\param(|H_\param|) < \infty$).  This setting encompasses the cases
$\suite{X}$ is a (non-controlled) Markov chain by choosing $P_\param =P$ for
any $\param$; the Robbins-Monro case by choosing $P_\param(x,\cdot) =
\pi_\param(\cdot)$ where $\pi_\param$ is a distribution on $\X$; the case when
$\suite{X}$ is an i.i.d.  sequence with distribution $\pi$ by choosing
$P_\param(x,\cdot) = \pi(\cdot)$ for any $x,\param$.

The goal of this paper is to provide almost sure convergence results of the
sequence $\suite{\param}$ under conditions on the regularity of the $\param
\mapsto \H{\param}{x}$ which does not include continuity, which is usually
assumed in the literature.  When $\suite{X}$ is a controlled Markov chain,
\cite{benveniste:metivier:priouret:1990} and \cite[Theorem 4.1]{tadic:1998}
establish \as\ convergence under the assumption that for any $x$, $\param
\mapsto \H{\param}{x}$ is H\"older-continuous. This assumption traces back to
\cite[Eq.  (4.2)]{kushner:1981} and the same assumption is imposed in
\cite[assumption (DRI2) and Proposition~6.1.]{andrieu:moulines:priouret:2005}.

In order to prove convergence, a preliminary step is to establish that the
sequence $\suite{\param}$ is $\PP$-\as\ in a compact set of $\Theta$, a
property referred to as \emph{stability} in~\cite{kushner:yin:2003}. It is
common in applications that stability fails to hold or it cannot be
theoretically guaranteed. When the `unconstrained' process as stated above can
be shown to be stable, the proof often requires problem specific arguments; see
for instance \cite{saksman:vihola:2010,andrieu:tadic:vihola:2015}.  Different
algorithmic modifications for ensuring stability have been suggested in the
literature.  It is sometimes possible to modify $H$ without modifying the
stationary points in order to ensure stability as suggested in
\cite{lemaire:pages:2010} (see also \cite{bardou:frikha:pages:2009} and
\cite{jourdain:lelong:2009} for applications of this approach). An alternative
is to adapt the step sizes that control the growth of the iterates
(\cite{kamal:2010}).  Another idea is to replace the single draw in
(\ref{eq:AS:generaldefinition}) by a Monte Carlo sum over many realizations of
$X_{n+1}$ (\cite{younes:1999}). Such modifications usually require quite
precise understanding of the properties of the system in order to be
implemented efficiently.  The values of $\theta_n$ may simply be constrained to
take values in a compact set $K$ \cite{kushner:clark:1978,kushner:yin:2003}.
The choice of the constraint set $K$ requires prior information about the
stationary points of $h$, as ill-chosen constraint set $K$ may even lead to
spurious convergence on the boundary of $K$.  It is possible to modify the
projection approach by constraining $\theta_n$ to take values in compact sets
$K_n$, which eventually cover the whole parameter space $\cup_n K_n =
\thetaset$ \cite{andradottir:1995}. In the controlled Markov chain setup, this
approach requires relatively good control on the ergodic properties of the
related Markov kernels near the `boundary' of $\thetaset$
\cite{andrieu:vihola:2011}.

We focus on the self-stabilized stochastic approximation algorithm
with controlled Markov chains \cite{andrieu:moulines:priouret:2005},
which is based on truncations on random varying sets as suggested in
\cite{chen:zhu:1986}. The main difference to the expanding projections
approach of \cite{andradottir:1995,andrieu:vihola:2011} is the occasional 
`restart' of the process; see Section \ref{section:cv:results}.  
The main advantages of this algorithm include 
that it does not introduce spurious convergence as projections to
fixed set, it provides automatic calibration of the step size, but
it does not require precise control of the behavior of the system near
the boundary of $\thetaset$ like the expanding projections and the
averaging approaches. The convergence properties of the algorithm
under the controlled Markov chain setup is studied in 
\cite{andrieu:moulines:priouret:2005} and in
a different setup in \cite{lelong:2013}. This algorithm has been used 
in various applications, including adaptive Monte Carlo
\cite{lapeyre:lelong:2011} and
adaptive Markov chain Monte Carlo \cite{andrieu:moulines:2006}.

\Cref{theorem:general} provides sufficient conditions implying that the
number of truncations is finite almost surely.  Therefore, this stabilized
algorithm follows the equation (\ref{eq:AS:generaldefinition}) after some
random (but almost surely finite) number of iterations.  We then prove the
almost sure convergence of $\suite{\param}$ to a connected component of a
limiting set which contains the roots of $h$. We also provide a new set of
sufficient conditions for the almost sure convergence of a SA sequence which
weakens the conditions used in earlier contributions (see
\Cref{lemma:rappel:AMP:item2}). We illustrate our results for (adaptive)
quantile and multidimensional median approximation.  We also analyze a
penalized version of the $0$-neighbors Kohonen algorithm.

The paper is organized as follows: the stabilized stochastic approximation
algorithm, the main assumptions and the convergence result are given in
\Cref{sec:maintheorems}.  \Cref{section:examples} is devoted to 
applications.  The proofs are postponed to Sections \ref{section:proof} and
\ref{section:proofs-examples}.

\section{Main results}
\label{section:cv:results}
\label{sec:maintheorems}
\label{sec:description:algo}
\label{sec:assumptions}

Let $\{\calK_q, q \in \Nset\}$ be a
sequence of compact subsets of $\thetaset$ such that
\begin{equation}
\label{eq:ConditionCompact} \bigcup_{i \geq 0} \calK_i= \thetaset,
\quad \textrm{and} \quad \calK_i \subset \ints (\calK_{i+1}), \quad
i \geq 0,
\end{equation}
where $\ints(A)$ denotes the interior of the set $A$.  The stable algorithm,
described in \Cref{algo:SA:stabilise}, proceeds as follows. We first run {\tt
  SA}$(\bs \gamma, \calK_0, x, \param)$ (see \Cref{algo:SA:basique}) until the
first time instant for which $\param_{n} \not \in \calK_0$. When it occurs,
\textit{(i)} the active set is replaced with a larger one $\calK_1$,
\textit{(ii)} the stepsize sequence $\bs \gamma$ is shifted and replaced with
the sequence $\bs \gamma^{\leftarrow 1} \eqdef \{\gamma_{1+k}, k \in \Nset \}$
and \textit{(iii)} {\tt SA}$(\bs \gamma^{\leftarrow 1}, \calK_1, x,\param)$ is
run. The above procedure is repeated until convergence; see \eg\ ~\cite{andrieu:moulines:priouret:2005}).
\begin{algorithm}[h]
  \SetKwInOut{Input}{Input} \Input{A positive sequence $\bs \rho = \{\rho_n, n
    \in \Nset \}$ \; A point $(x,\param) \in \X \times \thetaset$ \;A subset
    $\calK \subseteq \thetaset$} Initialization: $(X_0, \param_0) =
  \left(x,\param \right)$, $n =0$ \; \Repeat{$\param_{n} \notin \calK$}{Draw
    $X_{n+1} \sim P_{\param_n}(X_n, \cdot)$ \label{line:algo1:updateX} \;
    $\param_{n+1} = \param_n + \rho_{n+1} \H{\param_n}{X_{n+1}}$
    \label{line:algo1:updateT}  \; $n \leftarrow n+1$}
\caption{Stochastic Approximation algorithm {\tt SA}$(\bs \rho,\calK,x,\param)$ \label{algo:SA:basique}}
\end{algorithm}

\begin{algorithm}[ht!]
  \SetKwInOut{Input}{Input} \SetKwInOut{Output}{Output} \Input{A positive
    sequence $\bs \gamma = \suite{\gamma}$ and a  $(x,\param) \in \X \times
    \thetaset$; } \Output{The sequence $\{(X_n, \param_n, I_n), n \in \Nset \}$
  } $(X_0, \param_0, I_0) = \left(x,\param, 0 \right)$ \; $n =0$\tcc*{$n$:
    number of iterations} $\zeta_0=0$ \tcc*{$\zeta$: number of iterations in
    the current active set} \Repeat{convergence of the sequence $\{\param_n, n
    \in \Nset \}$}{ \eIf{$\zeta_n=0$
      \label{line:nonhomMCinit:algo2}}{ $X_{n+1/2} = x$, $\param_{n+1/2} = \theta$ \;}{$X_{n+1/2} = X_n$, $\param_{n+1/2} =
        \param_n$ \;} Draw $X_{n+1} \sim P_{\param_{n+1/2}}(X_{n+1/2}, \cdot)$
      \; $\param_{n+1} = \param_{n+1/2} + \gamma_{I_n+\zeta_n+1}
      \H{\param_{n+1/2}}{X_{n+1}}$ \; \eIf{$\param_{n+1} \in \calK_{I_n}$}{$I_{n+1} =
        I_n$, $\zeta_{n+1}= \zeta_n+1$
        \label{line:nonhomMCend:algo2}}{$I_{n+1} = I_n+1$, $\zeta_{n+1}=0$} $n \leftarrow n+1$
 }
\caption{A Stable Stochastic Approximation algorithm \label{algo:SA:stabilise}}
\end{algorithm}
Consider the following assumptions:
\begin{assumption} \label{hyp:PhiandH} The function $(x,\param) \mapsto \H{\param}{x}$  from
  $ \X \times \thetaset$ to $\Rset^d$ is measurable. There exists a measurable
  function $W: \X \to \coint{1, \infty}$ such that for any compact set $\calK
  \subset \thetaset$, $\sup_{\param \in \calK} |\H{\param}{}|_W < \infty$ where
  $|f|_W \eqdef \sup_\X  |f|/W$.
\end{assumption}

\begin{assumption} \label{hyp:noyau}
 \begin{enumerate}[label=(\alph*)]
 \item \label{hyp:noyau:loi_inv} For any $\param$ in $\thetaset$, the kernel
   $P_{\param}$ has a unique invariant distribution $\pi_{\param}$.
 \item \label{hyp:noyau:ergo:geom} For any compact $\mathcal{K}\subseteq
   \thetaset$, there exist constants $C>0$ and $\cgeom \in \ooint{0,1}$ such
   that for any $x \in \X$, $l \geq 0$,
   $\sup_{\param \in \mathcal{K}} \| \P{}^l(x,.)-\pi_{\param} \|_W \leq C
    \cgeom^l W(x)$ and $\sup_{\param \in \calK} \pi_\param(W) < \infty$,
 where $\| \mu \|_W \eqdef \sup_{\{f, |f|_W \leq 1\}} |\mu(f)|$.
\item \label{hyp:noyau:control:W:traj} There exists $p >1$ and for any compact
  set $\calK \subset \thetaset$, there exist constants $\varrho \in
  \ooint{0,1}$ and $b < \infty$ such that
  $\sup_{\param \in \calK}   P_\theta W^p(x) \leq \varrho W^p(x) + b$.
 \end{enumerate}
\end{assumption}
When $\P{} = P$ for any $\param$, sufficient conditions for \Cref{hyp:noyau}-\ref{hyp:noyau:ergo:geom}
are given in \cite[Chapters 10 and 15]{meyn:tweedie:1993}: they are mainly
implied by the drift condition \Cref{hyp:noyau}-\ref{hyp:noyau:control:W:traj} assuming the level sets $\{W \leq M\}$ are petite~\cite[Lemma 2.3]{fort:moulines:priouret:2010}.

We also introduce an assumption on the smoothness-in-$\param$ of the transition
kernels $\{P_\param, \param \in \thetaset \}$. Let $\pscal{\cdot}{\cdot}$
denote the usual scalar product in $\Rset^d$ and $\norm{\cdot}$ denote the
associated norm. Denote
\begin{equation}
  \label{def:D}
  D_W(\param,\param') \eqdef \sup_{x\in \X} \frac{\|P_{\param}(x,\cdot) - P_{\param'}(x,\cdot) \|_W}{W(x)} \eqsp.
\end{equation}
\begin{assumption}\label{hyp:Wfluctuation}
  There exists $\pa \in \ocint{0,1}$ such that for any compact $\calK \subset
  \thetaset$, 
  $$\sup_{\param, \param' \in \calK} \frac{D_W(\param,
\param')}{\norm{\param - \param'}^{\pa}}  < \infty \eqsp.
  $$
\end{assumption}
In \cite[Section 6]{andrieu:moulines:priouret:2005} it is assumed that there
exists $\alpha \in \ocint{0,1}$ such that for any compact set $\calK$,
$\sup_{\param_1,\param_2 \in \calK} \norm{ \param_1 - \param_2 }^{-\alpha} \ 
|\H{\param_1}{}-\H{\param_2}{}|_W < \infty$.  We consider here a weaker
condition.
 \begin{assumption} \label{hyp:H:deltaH} Let $\alpha \in \ocint{0,1}$. For any compact set $\mathcal{K}\subseteq \thetaset$,
   there exists a constant $C>0$ such that for all $\delta>0$,
\begin{align*}
  \sup_{ \param \in \mathcal{K}} \int \pi_{\param}(\rmd x) \sup_{\{\param' \in \calK, \norm{
    \param'- \param} \leq \delta \}} \norm{ \H{\param'}{x} - \H{\param}{x}} \leq C \, \delta^{\alpha} \eqsp.
\end{align*}
\end{assumption}
Denote by $h$ the mean field
  \begin{equation}
    \label{eq:meanfield}
    h(\param) \eqdef \int \pi_{\param}(\rmd x) \ \H{\param}{x} \eqsp.
  \end{equation}
The following assumption is classical in stochastic approximation theory (see for
  example \cite[Part II, Section 1.6]{benveniste:metivier:priouret:1990}, or
  \cite[Section 3.3]{borkar:2008}, \cite{laruelle:pages:2012}).
\begin{assumption}
\label{hyp:LyapunovFunction} There exists a continuously differentiable function $w : \Theta \to \coint{0, \infty}$ such that
\begin{enumerate}[label=(\alph*)]
\item \label{hyp:LyapunovFunction:compacity} For any $M  > 0$, the level
  set $\{ \param \in \thetaset, w(\param) \leq M \}$ is a compact set of
  $\thetaset$.
\item \label{hyp:LyapunovFunction:calL} The set $\calL$ of stationary points, defined by
  \begin{equation}
    \label{eq:definition:setL}
    \calL \eqdef \left \{ \param \in \Theta , \pscal{\nabla w(\param)}{h(\param)}= 0 \right \}  \eqsp,
  \end{equation}
is compact.
\item \label{hyp:LyapunovFunction:scal} For any $\param \in \Theta \setminus \calL$, $\pscal{\nabla w(\param)}{h(\param)} < 0$.
\end{enumerate}
\end{assumption}
Note that under \Cref{hyp:noyau}, \Cref{hyp:Wfluctuation} and
\Cref{hyp:H:deltaH}, $h$ is H\"older-continuous on $\thetaset$
(see~\Cref{lemme:ecart:h} below). We finally provide conditions on the
stepsize sequence $\bs \pas = \suite{\pas }$.
\begin{assumption} \label{hyp:sommes:pas:poly}
  $\bs \pas = \{\gamma_0/(n+1)^{\beta}, n \in \Nset \}$ with $\gamma_0>0$ and
\[
\beta \in \ocint{ \frac{1}{p} \vee \frac{1+(\alpha \wedge \pa)/p }{1+
    \left(\alpha \wedge \pa \right) } ;1} \eqsp,
\]
where $\pa$ and $\alpha$ are respectively defined in \Cref{hyp:Wfluctuation}
and \Cref{hyp:H:deltaH}.
\end{assumption}
We denote by $\overline{\PP}_{x,\param,i}$ (resp.
$\overline{\PE}_{x,\param,i}$) the canonical probability (resp. the canonical
expectation) associated to the process $\{(X_n, \param_n, I_n), n \in \Nset \}$
defined by \Cref{algo:SA:stabilise} when $(X_0, \param_0, I_0) = (x,\param,i)$.
The main results of this contribution is summarized in the following  theorem which shows that 
\begin{enumerate}[label=(\roman*)]
\item the number of updates of the active set is finite almost surely;
\item  the process converges to the set of stationary points.
\end{enumerate}
\begin{theorem}
\label{theorem:general}
Let $\{\calK_n, n \in \Nset \}$ be a compact sequence satisfying
(\ref{eq:ConditionCompact}) and $(x_\star, \param_\star) \in \X \times
\calK_0$. Assume \Cref{hyp:PhiandH} to \Cref{hyp:sommes:pas:poly}.  The
sequence $\{(X_n, \param_n), n \in \Nset \}$ given by \Cref{algo:SA:stabilise}
started from $(x_\star, \param_\star)$ is stable:
\begin{equation}\label{theorem:general:stabilite}
   \overline{\PP}_{x_\star,\param_\star,0} \left( \bigcup_{i \geq 0}
      \bigcap_{k \geq 0} \{ \param_k \in \ \calK_i \} \right) = 1 \eqsp.
\end{equation}
If in addition, one of the following assumptions holds
\begin{enumerate}[label=(\roman*)]
\item \label{theorem:general:hyp1surw} $w(\calL)$ has an empty interior,
\item \label{theorem:general:hyp2surw} $\nabla w$ is locally Lipschitz on
  $\Theta$, \Cref{hyp:noyau}-\ref{hyp:noyau:control:W:traj} is satisfied with
  $p \geq 2$, and \Cref{hyp:sommes:pas:poly} is strengthened with the condition
  $\beta > 1/2$,
\end{enumerate}
then the sequence $\{\theta_k, k \geq 0 \}$ converges to a connected component
of $\calL$:
\begin{equation} \label{theorem:general:convergence}
   \overline{\PP}_{x_\star,\param_\star,0} \left( \lim_{k \to \infty} \mathrm{d}(\param_k, \calL)=0,  \lim_k \norm{\param_{k+1} - \param_k} =0  \right) =1 \eqsp,
 \end{equation}
 where $\mathrm{d}(x,A)$ denotes the distance from $x$ to the set $A$.
\end{theorem}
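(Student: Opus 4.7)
The proof follows the two-stage strategy from~\cite{andrieu:moulines:priouret:2005}: first establish the stability property~\eqref{theorem:general:stabilite} (almost-sure finiteness of the restart counter $I_n$), then deduce~\eqref{theorem:general:convergence} by applying the deterministic SA convergence lemma \Cref{lemma:rappel:AMP:item2} to the unrestarted recursion that the algorithm follows once it stops leaving its active set.

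The backbone is the Poisson-equation decomposition. Under \Cref{hyp:noyau}, for each $\param\in\thetaset$ the solution $\hat H_\param\eqdef\sum_{k\ge 0}(P_\param^k H_\param - \pi_\param(H_\param))$ is well defined, $W$-bounded uniformly on compacts of $\thetaset$, and combining \Cref{hyp:Wfluctuation} with \Cref{hyp:H:deltaH} yields an estimate of the form
\[
\bigl|P_\param \hat H_\param(x) - P_{\param'}\hat H_{\param'}(x)\bigr|
\le C\,\|\param-\param'\|^{\alpha\wedge\pa}\,W(x)
\]
on compacts. Write then $\H{\param_n}{X_{n+1}} = h(\param_n) + \Delta M_{n+1} + \xi_{n+1}$, where $\Delta M_{n+1}\eqdef\hat H_{\param_n}(X_{n+1}) - P_{\param_n}\hat H_{\param_n}(X_n)$ is an $\mcf_{n+1}$-martingale increment and $\xi_{n+1}$ collects the telescoping residual $P_{\param_n}\hat H_{\param_n}(X_n) - P_{\param_{n-1}}\hat H_{\param_{n-1}}(X_{n-1})$ together with the stepsize correction. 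The $W^p$-drift \Cref{hyp:noyau}-\ref{hyp:noyau:control:W:traj} gives $\sup_n\overline{\PE}[W^p(X_n)]<\infty$ within each epoch of constant $I_n$, and the lower bound on $\beta$ in \Cref{hyp:sommes:pas:poly} renders $\sum_n \pas_{n+1}\xi_{n+1}$ absolutely convergent and $\sum_n\pas_{n+1}\Delta M_{n+1}$ a.s.\ convergent via Doob's inequality.

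Stability then follows from a Lyapunov argument based on \Cref{hyp:LyapunovFunction}: a first-order expansion gives $w(\param_{n+1})-w(\param_n) = \pas_{n+1}\pscal{\nabla w(\param_n)}{h(\param_n)} + \text{(summable noise)}$, so the strict negativity of the drift of $w$ off $\calL$ forbids $\{\param_n\}$ from leaving an arbitrarily large sublevel set of $w$ infinitely often; combined with~\eqref{eq:ConditionCompact} this forces $I_n$ to stabilize and proves~\eqref{theorem:general:stabilite}. Once the active set is frozen to some $\calK_i$, \Cref{lemma:rappel:AMP:item2} applied with $w$ as Lyapunov function and the above noise bounds delivers convergence of $\{\param_k\}$ to $\calL$: alternative (i) (empty interior of $w(\calL)$) rules out asymptotic drift along a plateau of $w$, while alternative (ii) (Lipschitz $\nabla w$, $p\ge 2$, $\beta>1/2$) produces $\|\param_{n+1}-\param_n\|\to 0$ through a second-order Taylor expansion of $w$, yielding~\eqref{theorem:general:convergence}.

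\textbf{Main obstacle.} The critical technical novelty compared with~\cite{andrieu:moulines:priouret:2005} is handling \Cref{hyp:H:deltaH}, which controls $\param'\mapsto\H{\param'}{x}$ only in a $\pi_\param$-integrated sense rather than pointwise in $x$. The residual $\xi_{n+1}$ must therefore be bounded not by a pointwise Hölder estimate on $H_{\param_n}-H_{\param_{n-1}}$ but by first swapping $P_{\param_n}^k$ for $\pi_{\param_n}$ at geometric cost $\cgeom^k W(x)$ via \Cref{hyp:noyau}-\ref{hyp:noyau:ergo:geom}, and then invoking \Cref{hyp:H:deltaH} for the integrated increment together with \Cref{hyp:Wfluctuation} for the kernel part. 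Keeping the resulting remainder summable under \Cref{hyp:sommes:pas:poly} is exactly what dictates the lower bound $(1+(\alpha\wedge\pa)/p)/(1+(\alpha\wedge\pa))$ on $\beta$.
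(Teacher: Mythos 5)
Your overall architecture coincides with the paper's: a Poisson-equation decomposition of $\H{\param_n}{X_{n+1}} - h(\param_n)$ into a martingale increment plus telescoping residuals, a stability step showing the restarts stop, and then the deterministic result \Cref{lemma:rappel:AMP:item2} with alternatives (i)/(ii) handled essentially as in the paper. However, there is a genuine gap at the core of your argument: the claimed estimate
\[
\bigl|P_\param \hat H_\param(x) - P_{\param'}\hat H_{\param'}(x)\bigr|
\le C\,\norm{\param-\param'}^{\alpha\wedge\pa}\,W(x)
\]
(your $\hat H_\param$ is the paper's $g_\param$) is not a consequence of the assumptions and fails in general. A pointwise-in-$x$ H\"older bound of this type is exactly what \cite{andrieu:moulines:priouret:2005} derives from their stronger hypothesis $\sup_{\param_1,\param_2 \in \calK}\norm{\param_1-\param_2}^{-\alpha}|\H{\param_1}{}-\H{\param_2}{}|_W < \infty$, which this paper deliberately drops: \Cref{hyp:H:deltaH} controls the modulus of continuity of $\param \mapsto \H{\param}{x}$ only after integration against $\pi_\param$. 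For discontinuous fields such as the quantile example, $\H{\param}{x}-\H{\param'}{x}$ has modulus $1$ on a set of $x$ of positive measure however close $\param,\param'$ are, and one application of $P_\param$ provides no smoothing in general (e.g.\ for Metropolis--Hastings-type kernels with a rejection atom at $x$, a central motivating case). The paper's substitute, \Cref{prop:ContinuitePoisson}, necessarily retains the extra term $\sum_{l=1}^n P_\vartheta^l \calH_{\vartheta,L}(x)$, where $\calH_{\vartheta,L}(x)=\sup_{\{\theta \in \calK:\norm{\theta-\vartheta}\le L\}}\norm{\H{\theta}{x}-\H{\vartheta}{x}}$, and this term cannot be absorbed into $C\norm{\param-\param'}^{\alpha}W(x)$.

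Your ``main obstacle'' paragraph correctly identifies the difficulty, but the one-sentence remedy (swap $P^k$ for $\pi$ at geometric cost $\cgeom^k W(x)$, then invoke \Cref{hyp:H:deltaH}) does not repair the argument and is inconsistent with the display above. The obstruction is that along the trajectory the parameters $\param_j,\param_{j-1}$ are random and correlated with the state $X_j$, whereas \Cref{hyp:H:deltaH} requires a deterministic center, a sup over a deterministic ball, and integration against the stationary law. The paper resolves this with machinery that your proposal does not contain: a lagged reference point $\vartheta=\param_{j-\psi_j}$ with window $\psi_j\sim\tau\ln j$; the event $\calA_r(k,\sigma)$ that increments stay below $\rho_\ell^r$, whose complement is controlled by Markov's inequality and the $W^p$ drift of \Cref{hyp:noyau} (this is precisely where the exponent $p$ enters the stepsize condition of \Cref{hyp:sommes:pas:poly}, a feature your pointwise bound would never produce); and the conditional-expectation propagation in the proof of \Cref{prop:controle:Sln}, pushing $P_{\param_{j-\psi_j}}^l\calH_{\param_{j-\psi_j},L_j}(X_j)$ back to the state $X_{j-\psi_j}$ before ergodicity and \Cref{hyp:H:deltaH} can legitimately be applied. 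Without these steps the summability of the residual series is unsubstantiated, and with it both the stability bound \eqref{theorem:general:stabilite} (which the paper obtains from \Cref{lemma:rappel:AMP:item1}, \Cref{coro:controle:Sln} and a geometric-decay/Borel--Cantelli argument over restart times) and condition (C-iv) of \Cref{lemma:rappel:AMP:item2} in the convergence step.
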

\begin{proof}
  The proof is postponed to \Cref{section:proof}.
\end{proof}

\section{Examples}
\label{section:examples}
For any $x \in \Rset^d$ and any $r>0$,  we define $\calB(x,r) \eqdef \{y \in
\Rset^d, \norm{y-x} \leq r\}$.

\subsection{Quantile estimation}
\label{sec:quantiles}
Let $P$ be a Markov kernel on $\X \subseteq \Rset^d$ having a stationary
distribution $\pi$. Let $\phi: \X \to \Rset$ be a measurable function. We want
to compute the quantile $\quant \in \ooint{0,1}$ under $\pi$ of the random
variable $\phi(X)$.  Quantile estimation has been considered
in~\cite[Chapter 1]{duflo:1997}; more refined algorithms can also be found
in~\cite{bardou:frikha:pages:2009,egloff:leippold:2010}. We consider the
stochastic approximation procedure $\param_{n+1} =\param_n + \gamma_{n+1}
\H{\param_n}{X_{n+1}}$ where
\begin{align} \label{H:quantiles}
  \H{\param}{x} \eqdef \quant - \indic{\phi(x) \leq {\param}} \eqsp,
\end{align}
and $\{X_k, k \geq 0\}$ is a Markov chain with Markov kernel $P$. In this
example, the Markov kernel is kept fixed \ie\ $P_\param = P$ and $\pi_\param =
\pi$ for all $\param \in \Rset$
\begin{proposition} \label{ex:quantile:d1}
  Assume that the push-forward measure of $\pi$ by $\phi$ has a density w.r.t.
  the Lebesgue measure on $\Rset$, bounded on $\Rset$ and $\int \norm{\phi(y)}
  \pi(y) \rmd y < \infty$. Assume also that \Cref{hyp:noyau} is satisfied with
  $P_\param = P$ and $\pi_\param = \pi$ for any $\param$.  Then
  \Cref{hyp:PhiandH}, \Cref{hyp:H:deltaH} and \Cref{hyp:LyapunovFunction} are
  satisfied with $\alpha =1$ and $w$, $\calL$ given by
\begin{align}
\label{eq:w:quantiles}
w({\param}) &= \frac{1}{2} \int \norm{ {\param} - \phi(y) } \pi(y) \, \rmd y +
\left( \frac{1}{2} - \quant \right) {\param} \eqsp,  \\
\calL &= \left\{ \param \in \Rset: \PP(\phi(y) \leq \param) = \quant \right\}
\eqsp.
\label{L:quantiles}
\end{align}
Furthermore, $w(\calL)$ has an empty interior.
\end{proposition}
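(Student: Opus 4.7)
The plan is to verify the four assumptions \Cref{hyp:PhiandH}, \Cref{hyp:H:deltaH}, \Cref{hyp:LyapunovFunction} (with the stated $w$ and $\calL$), and the empty-interior property, one after another. \Cref{hyp:PhiandH} is essentially free: $|\H{\param}{x}| \leq 1$ uniformly in $(\param,x)$ since $q \in (0,1)$ and $\indic{\phi(x) \leq \theta} \in \{0,1\}$, so $\sup_{\param \in \calK} |\H{\param}{}|_W \leq 1$ for any $W \geq 1$. For \Cref{hyp:H:deltaH}, I would use the pointwise identity
\[
 |\H{\param'}{x} - \H{\param}{x}| = \bigl|\indic{\phi(x) \leq \param} - \indic{\phi(x) \leq \param'}\bigr| = \indic{\phi(x) \in I(\param,\param')},
\]
where $I(\param,\param')$ is the interval with endpoints $\param,\param'$. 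Taking the sup over $\{\param':|\param'-\param|\leq \delta\}$ bounds this by $\indic{\phi(x) \in (\param-\delta,\param+\delta]}$, whose integral against $\pi$ equals the mass that the push-forward of $\pi$ by $\phi$ puts on an interval of length $2\delta$; since this push-forward has a bounded density on $\Rset$, this is at most $2\|f_\phi\|_\infty\, \delta$, yielding \Cref{hyp:H:deltaH} with $\alpha=1$.

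The heart of the proof is the Lyapunov structure. I would first argue that $w$ is well-defined and continuously differentiable on $\Rset$ by writing $|{\param}-\phi(y)| = ({\param}-\phi(y))_+ + (\phi(y)-{\param})_+$ and differentiating under the integral (justified by dominated convergence since the integrand is 1-Lipschitz in $\param$ and $\int |\phi| d\pi < \infty$). A straightforward computation, exploiting the absence of atoms of the push-forward (guaranteed by the bounded density hypothesis) to identify $\PP(\phi \leq \param) = \PP(\phi < \param)$, then yields
\[
 \nabla w(\param) = \PP(\phi(Y) \leq \param) - q,
\]
which is exactly $-h(\param)$. Consequently $\pscal{\nabla w(\param)}{h(\param)} = -|\nabla w(\param)|^2 \leq 0$, vanishing iff $\param \in \calL$ as defined in \eqref{L:quantiles}, giving \Cref{hyp:LyapunovFunction}-\ref{hyp:LyapunovFunction:scal}. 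Compactness of level sets \ref{hyp:LyapunovFunction:compacity} follows from the asymptotic estimate $w(\param) \sim (1-q)\param$ as $\param\to+\infty$ and $w(\param) \sim -q\,\param$ as $\param\to-\infty$ (both tending to $+\infty$ since $q \in (0,1)$), and continuity of $w$. For \ref{hyp:LyapunovFunction:calL}, $\calL$ is the preimage of $\{q\}$ under the continuous, nondecreasing function $F_\phi({\param}) = \PP(\phi(Y)\leq \param)$, hence a closed interval, and it is bounded because $F_\phi(\param) \to 0,1$ as $\param \to \mp\infty$.

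For the empty-interior claim, the key observation is that on the interval $\calL = [\param_-,\param_+]$ (possibly a singleton), we have $\nabla w \equiv 0$ by the formula above, so $w$ is constant on $\calL$ and $w(\calL)$ is a single point, which obviously has empty interior in $[0,\infty)$. The main potential obstacle is the justification of differentiation under the integral sign and the explicit identification $\nabla w = F_\phi - q$, which crucially relies on the atomlessness of the push-forward of $\pi$; without that, the derivative would only be a subgradient and the equality $h = -\nabla w$ would fail at points where $F_\phi$ jumps. The bounded-density hypothesis is precisely what closes this gap.
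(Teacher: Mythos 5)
Your proof is correct and follows essentially the same route as the paper: the uniform bound $|\H{\param}{x}|\leq 1$ for \Cref{hyp:PhiandH}, the interval-indicator bound combined with the bounded push-forward density for \Cref{hyp:H:deltaH} with $\alpha=1$, differentiation under the integral to get $w'(\param)=\PP(\phi(Y)\leq\param)-q=-h(\param)$, coercivity of $w$ for compact level sets, and constancy of $w$ on the interval $\calL$ for the empty-interior claim. The only point you leave untreated is that $w$ as defined in \eqref{eq:w:quantiles} need not be nonnegative as required by \Cref{hyp:LyapunovFunction}; the paper closes this in one sentence by observing that the Lyapunov function is defined up to an additive constant (the shift changes none of the properties you established), so your argument is complete up to that remark.
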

\begin{proof}
  The proof is postponed to \Cref{sec:proof:quantile:d1}.
\end{proof}

Therefore, by \Cref{theorem:general}, \Cref{algo:SA:stabilise} applied with a
sequence $\{\gamma_n, n \geq 0\}$ satisfying \Cref{hyp:sommes:pas:poly},
provides a sequence $\{\theta_n, n \geq 0\}$ converging almost-surely to the
quantile of order $q$ of $\phi(X)$ when $X \sim \pi$.

\subsection{Stochastic Approximation  Cross-Entropy (SACE) algorithm}
\label{sec:quantile:ACE}
Let $q \in \ooint{0,1}$, $\X \subseteq \Rset^d$ and $p$ be a density on $\X$
w.r.t. the Lebesgue measure.  The goal is to find the $q$-th quantile $\theta$
of $\phi(X)$, \ie\ $\theta$ such that $\int \indic{\phi(x) \geq \theta} p(x)
\rmd x = 1-q$.  We are particularly interested in extreme quantiles, \ie\ $q
\approx 1$ for which plain Monte Carlo methods are not efficient. We consider
an approach combining MCMC and the cross-entropy method (see \eg~\cite[Chapter
13]{kroese:taimre:botev:2011}).  Let $\calP =\{ g_\nu, \nu \in \mathcal{V}
\subseteq \Rset^v\}$ be a parametric family of distributions \wrt\ the Lebesgue
measure on $\X \subseteq \Rset^d$.  The importance sampling estimator amounts
to compute, for a given value of $\theta$,
\begin{equation}
\label{eq:IS-estimate}
n^{-1} \sum_{i=1}^n \indic{\phi(Z_i) \geq \theta} w_\nu(Z_i)\eqsp,
\end{equation}
where $\sequence{Z}[i][\Nset]$ is an \iid\ sequence distributed under the
instrumental distribution $g_\nu$ and $w_\nu(z)= p(z)/ g_\nu(z)$ is the
\emph{importance weight function}.  The choice of the parameter $\nu$ is of
course critical to reduce the variance of the estimator.  The optimal
importance sampling distribution, also called the zero-variance importance
distribution, is proportional to $\indic{\phi(z) \geq \theta} p(z)$. Note that
the optimal sampling distribution is known up to a normalizing constant, which
is the tail probability of interest.

The cross-entropy method amounts to choose the parameter $\nu$ by minimizing
the Kullback-Leibler divergence of $g_\nu$ from the optimal importance
distribution, or equivalently choose $\nu = \hat{\nu}$ with
\begin{equation}
\label{eq:CE-criterion}
\hat{\nu}= \argmax_{\nu \in \mathcal{V}}   \int \log g_\nu(y) \indic{\phi(y) \geq \theta} p(y) \rmd y.
\end{equation}
This integral is not directly available but can be approximated by Markov Chain Monte Carlo,
\begin{equation}
\label{eq:CE-criterion-empirical}
\hat{\nu} = \argmax_{\nu \in \mathcal{V}} \frac{1}{m} \sum_{i=1}^m \log g_\nu(Y_i)
\end{equation}
where $\sequence{Y}[i][\Nset]$ is a Markov chain with transition kernel
$Q_\theta$ where $Q_\theta$ has stationary density $p_\theta(z) \propto
\un_{\{\phi(z) \geq \theta \}} p(z)$.

In the sequel, it is assumed that $\calP$ is a canonical exponential family,
\ie\ there exist measurable functions $S: \X \to \Rset^v$, $A:\Rset^d \to
\Rset^+$, $B: \Rset^v \to \Rset$ such that, for all $\nu \in \mathcal{V}$ and
for all $x \in \X$,
\[
g_\nu(x) = A(x) \, \exp\big(B(\nu) + \pscal{\nu}{S(x)} \big) \eqsp.
\]
In such a case, solving the optimization problem
\eqref{eq:CE-criterion-empirical} amounts to estimate the sufficient statistics
$\bar{S}_m= m^{-1} \sum_{i=1}^m S(Y_i)$ and then to compute the maximum of $\nu
\mapsto B(\nu) + \pscal{\nu}{\bar{S}_m}$.  We assume that for any $s \in
\Rset^v$, the function $\nu \mapsto B(\nu) +\pscal{\nu}{s}$ admits a unique
maximum on $\mathcal{V}$ denoted by $\hat{\nu}(s)$.  When estimating the
quantile, the value of $\theta$ is not known a priori, and the above process
should be used several times for different values of $\theta$, which may be
cumbersome.

In the Stochastic Approximation version of the Cross Entropy (SACE algorithm),
we replace the Monte Carlo approximations \eqref{eq:IS-estimate} and
\eqref{eq:CE-criterion-empirical} by stochastic approximations. Given a
sequence of step-sizes $\{\gamma_n, n \geq 0 \}$ and a family of MCMC kernels
$\{Q_\theta, \theta \in \Rset \}$ such that $Q_\theta$ admits $p(x)
\indic{\phi(x) \geq \theta}$ as unique invariant distribution, the SACE
algorithm proceeds as follows
\begin{algorithm}[ht!]
  \SetKwInOut{Input}{Input} \SetKwInOut{Output}{Output} \Input{Initial values:
    $x \in \X$, $\nu_0 \in \mathcal{V}$, $\sigma_0 \in S(\X)$ } \Output{The
    sequence $\sequence{\theta}[n][\Nset]$ } $n =0$ \tcc*{$n$: number of
    iterations} $\nu_0 =\hat{\nu}(\sigma_0)$ \; $X_0 = x$ \;
  \Repeat{convergence of $\sequence{\theta}[n][\Nset]$}{Conditionally to the
    past, draw independently $Y_{n+1} \sim Q_{\theta_n}(Y_n, \cdot)$ and
    $Z_{n+1} \sim g_{\nu_n}$ \; $\theta_{n+1} = \theta_n + \gamma_{n+1} \left(
      q - \indic{\phi(Z_{n+1}) < \theta_n} p(Z_{n+1})/g_{\nu_n}(Z_{n+1})
    \right)$ \; $\sigma_{n+1} = (1-\gamma_{n+1}) \sigma_n + \gamma_{n+1}
    S(Y_{n+1})$ \; $\nu_{n+1} = \hat{\nu}(\sigma_{n+1}) $\; $n \leftarrow n+1$}
\caption{SACE algorithm \label{algo:ACE}}
\end{algorithm}

 This algorithm can be casted into the stochastic approximation
  form $\vartheta_{n+1} = \vartheta_n + \gamma_{n+1} \H{\vartheta_n}{X_{n+1}}$, by
  setting
\[
\vartheta_n =
\begin{bmatrix}
  \theta_n \\
  \sigma_n
\end{bmatrix} \qquad X_n =
\begin{bmatrix}
  Y_n \\
  Z_n
\end{bmatrix}  \qquad  \H{(\theta, \sigma)}{y,z} =
\begin{bmatrix}
  q - \indic{\phi(z) < \theta} \ p(z)/g_{\hat{\nu}(\sigma)}(z)
  \\
  S(y) - \sigma
\end{bmatrix}
\eqsp.
\]
It is easily seen from~\Cref{algo:ACE} that $\sequence{X}[n][\Nset]$ is a
controlled Markov chain: the conditional distribution of $X_{n+1}$ given the
past is $P_{\vartheta_n}(X_n, \cdot)$ where
\[
P_{(\theta, \sigma)}((y,z), \rmd (y',z')) = Q_{\theta}(y, \rmd y') \, g_{\hat{\nu}(\sigma)}(z') \rmd z' \eqsp;
\]
this kernel possesses a unique invariant distribution with density
\[
\pi_{(\theta, \sigma)}(y,z) = p_\theta(y) \, g_{\hat{\nu}(\sigma)}(z) \, \eqsp.
\]
Therefore, the mean field function $h$ is given by (up to a transpose)
\[
(\theta, \sigma) \mapsto \left( \int \un_{ \{\phi(z) \geq \theta\}} p(z) \rmd z
  - q, \ \int S(y) p_\theta(y) \rmd y - \sigma \right) \eqsp.
\]
We establish that SACE satisfies \Cref{hyp:H:deltaH} in the case $\calP$ and
$\pi$ satisfy the following assumptions
\begin{assumptionE}\label{hyp:ACE}
  \begin{enumerate}
  \item  There exists $\alpha \in \ocint{0,1}$ and for any compact set $\calK$ of
  $S(\X)$, there exists a constant $C$ such that $\left| B(\hat \nu(\sigma)) -
    B(\hat \nu (\sigma')) \right| + \norm{\hat \nu(\sigma) - \hat \nu(\sigma')} \leq C
  \norm{\sigma - \sigma'}^{\alpha}$ for any $\sigma, \sigma' \in \calK$.
\item  The push-forward distribution of $p$ by $\phi$ possesses a bounded density
  w.r.t. the Lebesgue measure on $\Rset$.  In addition, there exists $\delta
  >0$ such that
\[
\int \left(1+ \norm{S(x)} \right) \exp\left(1+ \delta \norm{S(x)} \right) \,
p(x) \rmd x < \infty \eqsp.
\]
  \end{enumerate}
\end{assumptionE}
\begin{proposition}
\label{prop:convergence-ACE}
Assume \Cref{hyp:ACE}. Then \Cref{hyp:H:deltaH} holds with $\alpha$ given by
\Cref{hyp:ACE}.
\end{proposition}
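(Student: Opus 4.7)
The plan is to bound the two components of $\H{\vartheta'}{y,z} - \H{\vartheta}{y,z}$ separately, with $\vartheta = (\theta,\sigma)$, $\vartheta' = (\theta',\sigma')$ and $x=(y,z)$. The second component equals $\sigma - \sigma'$ and is trivially controlled by $\delta$. For the first component, I would use the algebraic decomposition
\[
\indic{\phi(z)<\theta} \frac{p(z)}{g_{\hat\nu(\sigma)}(z)} - \indic{\phi(z)<\theta'} \frac{p(z)}{g_{\hat\nu(\sigma')}(z)}
= \bigl(\indic{\phi(z)<\theta} - \indic{\phi(z)<\theta'}\bigr) \frac{p(z)}{g_{\hat\nu(\sigma)}(z)} + \indic{\phi(z)<\theta'} \frac{p(z)}{g_{\hat\nu(\sigma)}(z)}\Bigl(1 - \frac{g_{\hat\nu(\sigma)}(z)}{g_{\hat\nu(\sigma')}(z)}\Bigr),
\]
and treat these as an \emph{indicator term} and a \emph{weight-ratio term}.

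For the indicator term the key observation is $\sup_{\norm{\theta'-\theta}\leq\delta} \norm{\indic{\phi(z)<\theta}-\indic{\phi(z)<\theta'}} \leq \indic{\norm{\phi(z)-\theta}\leq\delta}$. After multiplying by $\pi_\vartheta(y,z) = p_\theta(y)\, g_{\hat\nu(\sigma)}(z)$ the factor $g_{\hat\nu(\sigma)}$ cancels the denominator and the $y$-integral yields $1$, leaving $\int p(z)\, \indic{\norm{\phi(z)-\theta}\leq\delta} \rmd z$; this is $O(\delta)$ by the bounded density assumption in \Cref{hyp:ACE}(2), hence $O(\delta^\alpha)$ on $\delta \leq 1$.

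For the weight-ratio term I exploit the exponential family structure: setting $u(z) \eqdef B(\hat\nu(\sigma)) - B(\hat\nu(\sigma')) + \pscal{\hat\nu(\sigma)-\hat\nu(\sigma')}{S(z)}$, one has $g_{\hat\nu(\sigma)}(z)/g_{\hat\nu(\sigma')}(z) = e^{u(z)}$, and \Cref{hyp:ACE}(1) gives $\norm{u(z)} \leq C\delta^\alpha(1+\norm{S(z)})$ whenever $\norm{\sigma'-\sigma} \leq \delta$ with $\sigma,\sigma'$ in a fixed compact set. Using the elementary inequality $\norm{1-e^u} \leq \norm{u}\, e^{\norm{u}}$ and restricting to $\delta$ small enough that $C\delta^\alpha \leq \min(1,\delta_0)$ (with $\delta_0$ from \Cref{hyp:ACE}(2)), one obtains $\norm{1-e^{u(z)}} \leq C\delta^\alpha (1+\norm{S(z)})\, \exp(1+\delta_0\norm{S(z)})$. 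Integrating against $\pi_\vartheta$ (again the $g_{\hat\nu(\sigma)}$ factor cancels the denominator and the $y$-integral is $1$) and invoking the exponential moment bound in \Cref{hyp:ACE}(2) gives the desired $C'\delta^\alpha$ bound. For $\delta$ bounded away from zero the supremum over the compact $\calK$ is a fixed constant, absorbed into $C$ by inflating the leading constant.

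The main difficulty is the weight-ratio term: one must align the H\"older exponent $\alpha$ coming from \Cref{hyp:ACE}(1) with the integrability threshold $\delta_0$ coming from \Cref{hyp:ACE}(2), and in particular ensure that the $e^{\norm{u}}$ factor arising from $\norm{1-e^u}\leq \norm{u}e^{\norm{u}}$ is absorbed into the exponential moment of $\norm{S}$. The seemingly innocuous ``$1+$'' inside $\exp(1+\delta\norm{S(x)})$ in \Cref{hyp:ACE}(2) is what provides exactly the slack needed for this absorption when $\delta$ is close to $0$.
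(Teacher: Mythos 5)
Your proposal is correct and takes essentially the same route as the paper's own proof: the same split into the $\sigma$-component, an indicator term and a weight-ratio term, the same band bound $\indic{\theta\wedge\theta' \leq \phi(z)\leq \theta\vee\theta'}$ integrated after the cancellation of $g_{\hat\nu(\sigma)}$ against $\pi_\vartheta$, and the same estimate $\norm{1-e^{u}}\leq \norm{u}\,e^{\norm{u}}$ with $\norm{u}$ controlled via \Cref{hyp:ACE}(1). The paper likewise restricts to increments $\tau$ with $C\tau^{\alpha}\leq \delta$ so that the exponential moment in \Cref{hyp:ACE}(2) absorbs the factor $e^{\norm{u}}$, exactly the alignment you identify as the crux.
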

\begin{proof}
  The proof is postponed to~\Cref{sec:proof:prop:convergence-ACE}.
\end{proof}

Consider a bridge network: the network is composed with nodes and $d$ edges $\{e_1, \cdots, e_d \}$ with length $\{U_\ell, \ell \leq d \}$.  Fix two nodes $N_1,N_2$ in the graph; we are interested in the length of the shortest path from $N_1$ to $N_2$ defined by
\[
\phi(U_1, \cdots, U_d) = \mathrm{min}_{\calC} \sum_{\ell \, \text{s.t.} \,
  e_\ell \in \calC} \bu_\ell \, U_\ell
\]
where $\calC$ denotes a path from $N_1$ to $N_2$ ($\calC$ is a set of edges)
and $\bu_\ell >0$ (see \Cref{fig:GraphBridgeNetwork}).
\begin{figure}[htbp]
  \centering
  \includegraphics[width=5cm]{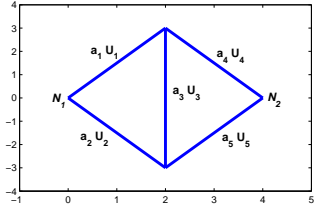}
\caption{A bridge network, in the case $d=5$}
  \label{fig:GraphBridgeNetwork}
\end{figure}
  It is assumed that the lengths $\{U_\ell , \ell \leq d \}$ are i.i.d. and uniformly distributed on $\ccint{0, 1}$. We are interested in
computing a threshold $\theta_\star$ such that the probability $\phi(U_1,
\cdots, U_d)$ exceeds $\theta_\star$ is $1-q$ in the case $q$ is close to one.
In this example,
\[
\X = \ccint{0, 1}^d \eqsp, \qquad p(u_1, \cdots, u_d) = \prod_{\ell=1}^d
\indic{\ccint{0,1}}(u_\ell) \eqsp.
\]
The importance sampling distribution $g_\nu$ is a product of Beta$(\nu_\ell,1)$
distributions:
\[
\mathcal{V} = \left(\Rset^+ \setminus \{0 \} \right)^d \eqsp, \qquad g_\nu(u_1,
\cdots, u_d) = \prod_{\ell=1}^d \nu_\ell \left( u_\ell \right)^{\nu_\ell-1} \indic{\ccint{0,1}}(u_\ell) \eqsp.
\]
$g_\nu$ is from a canonical exponential family with $S(u_1, \cdots, u_d) =
\left( \ln u_\ell \right)_{1 \leq \ell \leq d}$ and $B(\nu_1, \cdots, \nu_d) =
\sum_{\ell=1}^d \ln \left( \nu_\ell \right)$.  Furthermore, for any $1 \leq
\ell \leq d$, $\left(\hat \nu(s_1, \cdots, s_d) \right)_\ell = \left( - s_\ell
\right)^{-1}$.

In this example, $\Theta = \Rset \times \prod_{\ell=1}^d \ooint{-\infty, 0}$.
The assumptions \Cref{hyp:ACE} are easily verified with $\alpha=1$ (details are
omitted). For the MCMC samplers $\{Q_\theta, \theta \in \Rset \}$, we use a
Gibbs sampler: note that for any $\ell \in \{1, \cdots, d \}$ and any $\{u_j, j
\neq \ell \}$, $u_\ell \mapsto \phi(u_1, \cdots, u_d)$ is increasing.  Hence,
the conditional distribution of the $\ell$-th variable conditionally to the
others when the joint distribution is proportional to $p(x) \indic{\phi(x) \geq
  \theta}$ is a uniform distribution on $\ccint{(\theta - \phi(u^\ast))_+,1}$
where $u^\ast =(u_1, \cdots, u_{\ell-1}, 0, u_{\ell+1}, \cdots, u_d)$.

We illustrate the convergence of SACE for the bridge network displayed on
\Cref{fig:GraphBridgeNetwork} in the case
$[\bu_1, \cdots, \bu_d] = [1, 2, 3, 1,2]$. On
\Cref{fig:cvgSACE}[left], we show a path of
$\{\theta_n, n\geq 0 \}$ for different runs of the algorithm corresponding to
the different quantiles $q$. On \Cref{fig:cvgSACE}[right],
we show the path of the $d$ components of
$\hat{\nu}(\sigma_n)$ in the case $q = 0.001$.  Not surprisingly, the largest
values of the parameters $\hat{\nu}(\sigma_n)_\ell$ at convergence are reached
with $\ell = 1, 4$; they correspond to the shortest range of path length ($a_1
= a_4 =1$).
\begin{figure}[h]
  \centering
\begin{tabular}{cc}
  \includegraphics[width=0.5\textwidth]{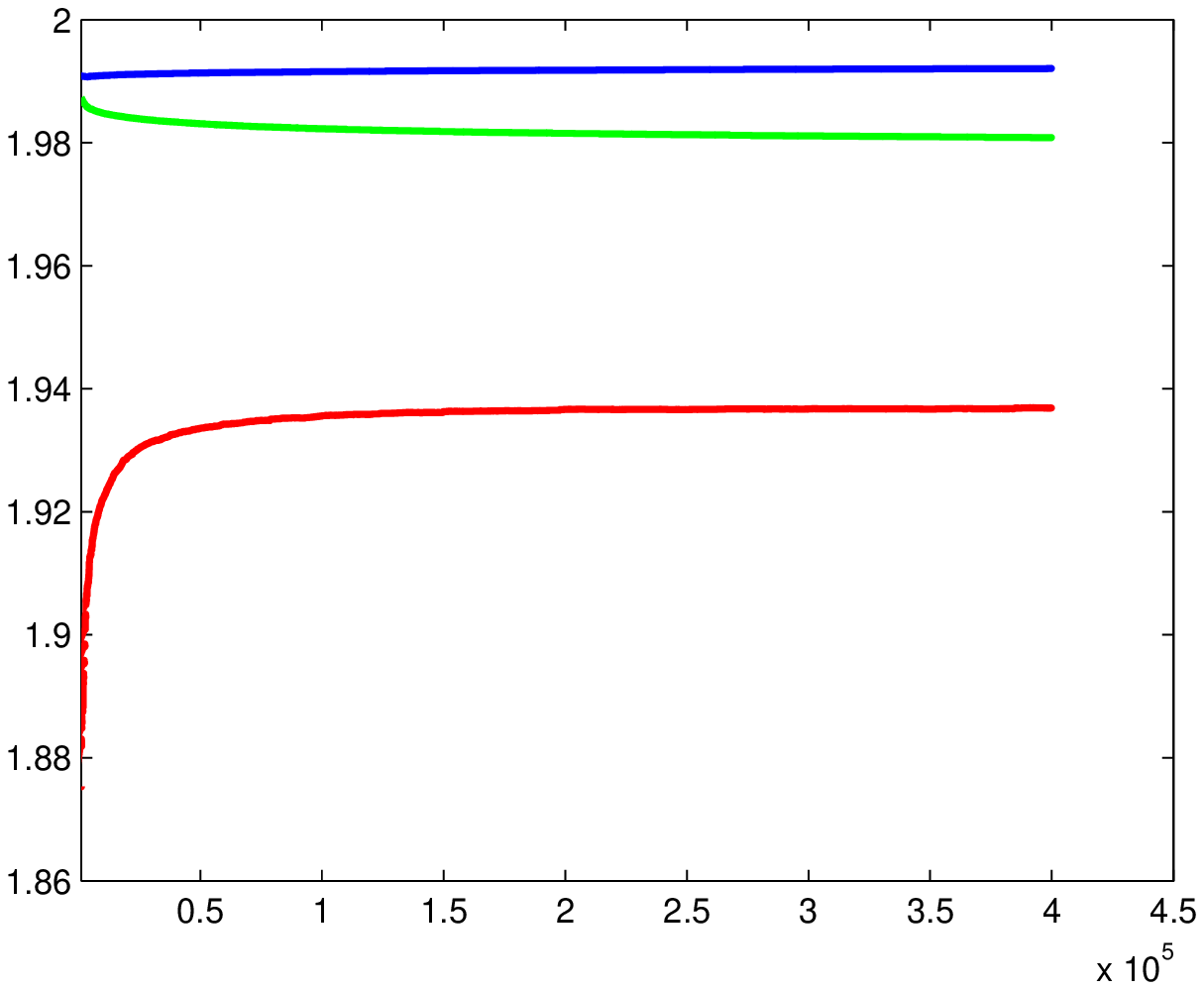}  &  \includegraphics[width=0.5\textwidth]{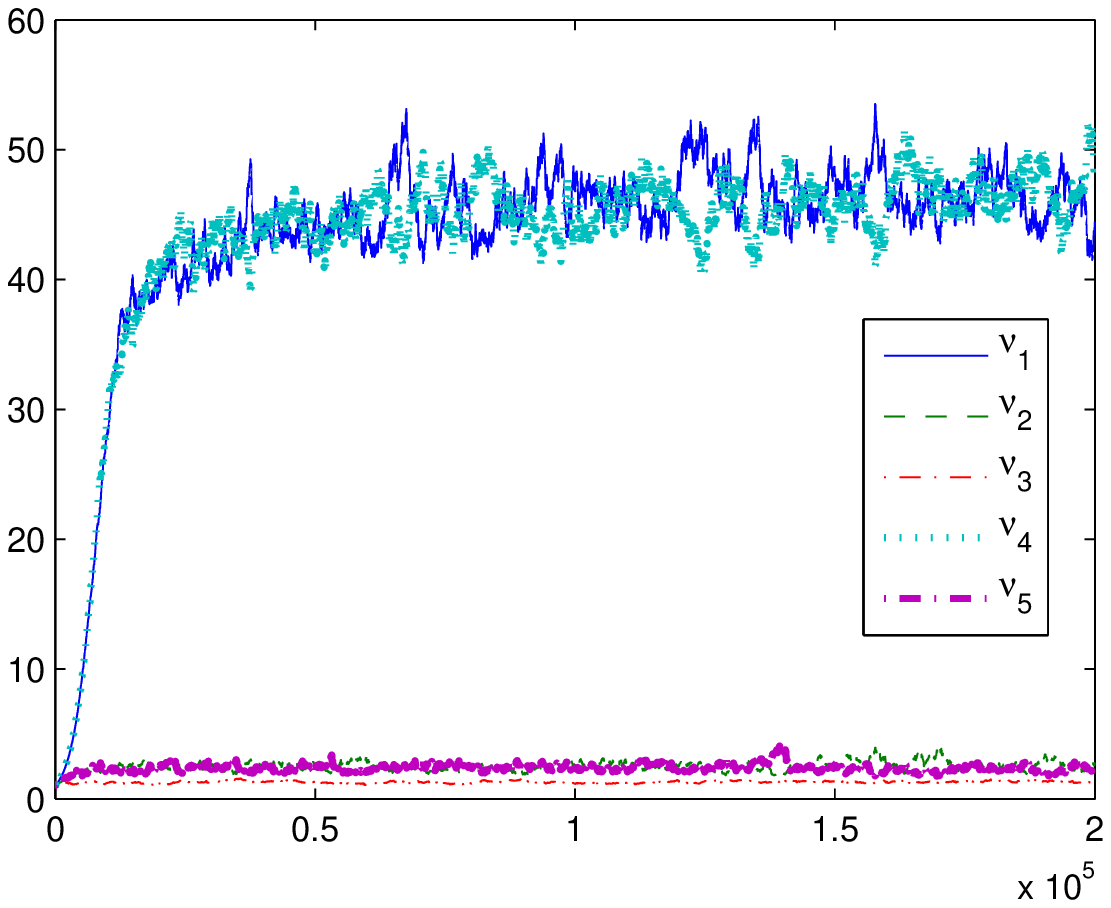}
\end{tabular}
  \caption{[left] Paths of $\theta_n$ for different values of $q$ ($q=10^{-3}, 10^{-4}$ and $10^{-5}$) - the first $1000$ iterations are discarded. [Right] Path of the $d$ components of $\hat{\nu}(\sigma_n)$, when $q = 0.001$}
\label{fig:cvgSACE}
\end{figure}

\subsection{Median in multi-dimensional spaces}
\label{sec:quantile:median}
In a multivariate setting different extensions of the median have been
proposed in the literature (see for instance \cite{cardot:cenac:zitt:2012} and
\cite{cardot:cenac:zitt:2013} and the references therein).
We focus here on the spatial median, also named geometric median which is probably the most frequently used. The median $\theta$ of a random vector $X$ taking values in $\rset^d$ with $d \geq 2$
is
\[
\param =  \argmin_{m \in \rset^d} \PE[ \norm{X-m} ]
\]
The median $\param$ is uniquely defined unless the support of the distribution
of $X$ is concentrated on a one dimensional subspace of $\rset^d$. Note also that it is translation invariant. Following \cite{cardot:cenac:zitt:2012}, we consider the stochastic approximation procedure with
\begin{align} \label{H:mediane}
 \H{\param}{x} \eqdef \frac{x-\param}{\norm{x-\param} } \indic{x \neq \param} \eqsp,
\end{align}
and the following assumptions
\begin{assumptionE}\label{hyp:quantile:median}
  the condition \Cref{hyp:noyau} is satisfied with $\pi_\param = \pi$ for any $\param$. The density $\pi$ is bounded on $\Rset^d$ and $\int
  \norm{x} \pi(x) \rmd x < \infty$.
\end{assumptionE}
\begin{proposition}
\label{prop:example-quantile}
Assume \Cref{hyp:quantile:median}.  Let $\param_\star$ be the unique solution
of $\int \H{\param}{x} \pi(x) \rmd x = 0$.  Then \Cref{hyp:PhiandH},
\Cref{hyp:H:deltaH} and \Cref{hyp:LyapunovFunction} are satisfied with $w$ and
$\calL$ given by
  \[
  w(\param) = \int \norm{x-\param} \pi(x) \rmd x \eqsp, \qquad \calL =
  \{\param_\star \} \eqsp,
  \]   and with  $\alpha = d/(1+d)$.
\end{proposition}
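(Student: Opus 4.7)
The plan is to verify \Cref{hyp:PhiandH}, \Cref{hyp:H:deltaH}, and \Cref{hyp:LyapunovFunction} in turn. Taking $W \equiv 1$, \Cref{hyp:PhiandH} is immediate since $\norm{\H{\param}{x}} \leq 1$ for all $x \neq \param$, so $\sup_{\param \in \calK} |\H{\param}{}|_W \leq 1$ on every compact $\calK$, and measurability of $(x,\param)\mapsto \H{\param}{x}$ is clear. Since $\pi$ is absolutely continuous with respect to Lebesgue measure, the indicator $\indic{x \neq \param}$ is $\pi$-irrelevant, and the mean field reads $h(\param) = \int (x-\param)/\norm{x-\param}\, \pi(x) \rmd x$.

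The main work, and what I expect to be the chief obstacle, is \Cref{hyp:H:deltaH}: $\H{\param}{x}$ is \emph{discontinuous} at $x = \param$, so there is no pointwise Hölder bound in $\param$, and the exponent $\alpha = d/(1+d)$ must emerge from a global balance. The strategy is a split-and-optimize argument on a ball of radius $r > \delta$ centered at $\param$. On $\calB(\param, r)^c$, for any $\param'$ with $\norm{\param' - \param} \leq \delta$, one has $\norm{x - \param'} \geq r - \delta$; writing $u = x-\param$ and $u' = x-\param'$, the elementary identity
\[
\frac{u'}{\norm{u'}} - \frac{u}{\norm{u}} = \frac{(u' - u)\norm{u} + u\bigl(\norm{u} - \norm{u'}\bigr)}{\norm{u}\norm{u'}}
\]
together with $\norm{u - u'} \leq \delta$ and $|\norm{u} - \norm{u'}| \leq \delta$ yields $\norm{\H{\param'}{x} - \H{\param}{x}} \leq 2\delta/(r-\delta)$. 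Inside $\calB(\param, r)$, bound the integrand by the trivial $2$, contributing at most $2\bar\pi\, c_d r^d$, where $\bar\pi = \sup\pi$ and $c_d = \leb(\calB(0,1))$. Adding the two contributions gives a bound $2\delta/(r-\delta) + 2\bar\pi\, c_d r^d$, uniform in $\param \in \calK$; choosing $r = \delta^{1/(d+1)}$ for small $\delta$ balances the two terms at order $\delta^{d/(d+1)}$, while for $\delta$ bounded away from $0$ the crude bound $2$ is absorbed into the constant. Hence \Cref{hyp:H:deltaH} holds with $\alpha = d/(1+d)$.

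For \Cref{hyp:LyapunovFunction}, I verify the three items for $w(\param) = \int \norm{x - \param} \pi(x) \rmd x$. The hypothesis $\int \norm{x} \pi(x) \rmd x < \infty$ makes $w$ finite, and the triangle inequality $w(\param) \geq \norm{\param} - \int \norm{x} \pi(x)\rmd x$ yields coercivity, hence compactness of level sets (item (a)). The map $\param \mapsto \norm{x-\param}$ is convex and $1$-Lipschitz, with gradient $-(x-\param)/\norm{x-\param}$ for $x \neq \param$; since $\{x = \param\}$ is $\pi$-null and the difference quotient is uniformly bounded, dominated convergence yields $w \in C^1(\Rset^d)$ with $\nabla w(\param) = -h(\param)$, and a second application of dominated convergence gives continuity of $\nabla w$. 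Therefore $\pscal{\nabla w(\param)}{h(\param)} = -\norm{h(\param)}^2$, which is strictly negative whenever $h(\param) \neq 0$ and vanishes exactly at $\param = \param_\star$ by the uniqueness hypothesis on $\param_\star$. Consequently $\calL = \{\param_\star\}$, which is trivially compact, establishing items (b) and (c).
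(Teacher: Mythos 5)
Your proof is correct, and its central piece --- the verification of \Cref{hyp:H:deltaH} with $\alpha = d/(1+d)$ --- is essentially the paper's argument: both split the integral at a ball of radius $\asymp \delta^{1/(1+d)}$ around $\param$, bound the integrand by $2$ inside (cost $\asymp \bar\pi\, r^d$ using boundedness of $\pi$) and by $C\delta/\norm{x-\param}$ outside, then optimize the radius; the paper phrases this with radius $\delta+\delta^{\beta}$ and exponent $(\beta d)\wedge(1-\beta)$ maximized at $\beta = 1/(1+d)$, which is the same computation. Where you genuinely diverge is the smoothness of $w$. The paper invokes \cite[Lemma~19-(ii)]{arcones:1998}, a second-order expansion with remainder $C\norm{\incr}^2/\norm{x-\param}$, and must then prove a separate moment bound (\Cref{lem:ControlMoment}, $\sup_{\param}\int \norm{x-\param}^{-1}\pi(x)\,\rmd x < \infty$, which relies on $d \geq 2$) to integrate that remainder. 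You instead use only that $\param \mapsto \norm{x-\param}$ is $1$-Lipschitz: the difference quotients are uniformly bounded, so dominated convergence gives the partial derivatives $\pi$-a.e.\ (the set $\{x=\param\}$ being $\pi$-null by absolute continuity), and a second dominated-convergence argument gives their continuity, hence $w \in C^1$ with $\nabla w = -h$. This is more elementary and self-contained --- it avoids both the external citation and the moment lemma, and would even work in $d=1$ where the paper's route breaks down --- at the price of not producing the quantitative expansion. One last cosmetic difference: for $\calL = \{\param_\star\}$ you rely on the uniqueness of $\param_\star$ assumed in the statement, whereas the paper justifies that uniqueness via \Cref{hyp:quantile:median} and \cite{milasevic:ducharme:1987}; since the proposition's wording grants you uniqueness, your shortcut is legitimate.
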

\begin{proof}
The proof is postponed to \Cref{sec:proof:quantile:median}.
\end{proof}

Here again, $w(\calL)$ has an empty interior; and \autoref{theorem:general}
provides sufficient conditions on the kernels $\{P_\theta, \theta \in \Theta
\}$ and on the sequence $\{\gamma_n, n \geq 0\}$ implying that $\{\theta_n, n
\geq 0 \}$ converges almost-surely to $\param_\star$.

\subsection{Vector quantization}
\label{sec:vectquant}
\label{sec:qv:cv}
Vector quantization consists of approximating a random vector $X$ in $\Rset^d$
by a random vector taking at most $N$ values in $\Rset^d$. In this section, we
assume that
\begin{assumptionE}
\label{hyp:qv}
the distribution of $X$ is absolutely continuous with respect to the Lebesgue
measure on $\Rset^d$ with density $\pi$ having a bounded support: $\pi(x) = 0$
for any $\norm{x} > \Delta$ for some $\Delta >0$.
\end{assumptionE}
Vector quantization plays a crucial role in source coding
\cite{juang:etal:1981}, numerical integration \cite{pages:1998,pages:etal:2004}
and nonlinear filtering~\cite{pages:pham:2005,pham:runggaldier:sellami:2005}.
Several stochastic approximations procedure have been proposed to
approximate the optimal quantizer; see  \cite{kohonen:1982,benaim:fort:pages:1998}.
For $\btheta = (\theta^{(1)}, \theta^{(2)}, \dots, \theta^{(N)}) \in
(\Rset^d)^N$, and for any $1 \leq i \leq N$, define the Voronoi cells
associated to the dictionary $\theta$ by
\begin{align*}
  {C}^{(i)}(\btheta) \eqdef \left\{ u \in \Rset^d, \norm{u-\theta^{(i)}} = \min
    \limits_{1 \leq j \leq N} \norm{u - \theta^{(j)}} \right\} \eqsp.
\end{align*}
These cells allow to approximate a random vector $X$ by $\theta^{(i)}$ in the
cell $C^{(i)}(\btheta)$. Denote by $\widetilde w$ the mean squared quantization
error (or distortion) given by
\begin{align*}
  \widetilde w(\btheta) \eqdef \sum_{i=1}^N \PE_\pi \left[ \norm{X -
      \theta^{(i)} }^2 \indic{C^{(i)}(\btheta)}(X) \right] \eqsp.
\end{align*}
The Kohonen algorithm (with 0 neighbors) is a stochastic approximation
algorithm with field $\widetilde H : (\Rset^d)^N \times \Rset^d \to
(\Rset^d)^N$ given by
\begin{align*}
\tH{\btheta}{u} \eqdef - 2 \left((\theta^{(i)}-u)
  \indic{C^{(i)}(\btheta)}(u) \right)_{1 \leq i \leq N} \eqsp.
\end{align*}
The convergence of the Kohonen algorithm has been established
in dimension $d=1$ for \iid\ observations $\{X_n, n \in
\Nset\}$~\cite{pages:1998}. The case $d \geq 2$ is still an open question: one
of the main difficulty arises from the non-coercivity of the distortion
$\widetilde w$ and the non-smoothness property of the field $\tH{\theta}{x}$.
The goal here is to go a step further in the study of the multidimensional
case, including a generalization to the case the observations $\{X_k, k\geq 1
\}$ are Markovian (see
e.g.~\cite{pages:etal:2004,pham:runggaldier:sellami:2005}):
\begin{assumptionE}
 \label{hyp:qv:kern}
 $\{P_{\btheta}, \btheta \in \thetaset\}$ is a family of kernels satisfying
 \Cref{hyp:noyau} for some $p \geq 2$, \Cref{hyp:Wfluctuation} and such that
 $\pi_{\btheta} = \pi$ for any $\btheta \in \thetaset$.
\end{assumptionE}
We define
\[
\Theta = \left\{ \btheta = (\theta^{(1)}, \dots \theta^{(N)}) \in (\Rset^d)^N
  \cap (\mathcal{B}(0,\Delta))^N, \theta^{(i)} \neq \theta^{(j)} \ \text{for all $i
  \neq j$} \right\} \eqsp;
\]
and run \Cref{algo:SA:stabilise} with $\H{\btheta}{} \in \rset^{dN}$ defined by
\begin{equation}\label{eq:qv:defH}
\H{\btheta}{x} = \tH{\btheta}{x} - \lambda \left( \sum_{j \neq i} \frac{\theta^{(i)} - \theta^{(j)}}{\norm{\theta^{(i)} - \theta^{(j)}}^4}\right)_{1 \leq i
\leq N}
\end{equation}
for some $\lambda >0$ with the sequence of compact sets
$\calK_q = \{\btheta \in \Theta: \min_{i \neq j} \norm{\theta^{(i)} -
  \theta^{(j)}} \geq 1/q \}$.
Set
\begin{align}\label{eq:qv:defw}
  w(\btheta) = \widetilde{w}(\btheta) + \frac{\lambda}{4} \sum_{i \neq j} \norm{\theta^{(i)} - \theta^{(j)}}^{-2}.
\end{align}
Under \Cref{hyp:qv}, from~\cite[Proposition~9 and Lemma 29]{pages:1998}, $w$ is
continuously differentiable on $\Theta$ and $\nabla w(\btheta) = - \int
\H{\btheta}{x} \pi(x) \, \rmd x$.  Furthermore, for any compact set $\calK_q$,
there exists a constant $C$ such that for any $\btheta \in \calK_q$, $\bar
\btheta \in \calK_q$, $\norm{\nabla w(\theta) - \nabla w(\bar \btheta)} \leq C \norm{\btheta - \bar
  \btheta}$. Hence, $\nabla w$ is locally Lipschitz on $\Theta$.
\begin{lemma}
\label{prop:qv:cv}
Assume \Cref{hyp:qv} and \Cref{hyp:qv:kern}. Then \Cref{hyp:PhiandH},
\Cref{hyp:H:deltaH} and \Cref{hyp:LyapunovFunction} are satisfied with $w$
defined by \eqref{eq:qv:defw} and $\calL \eqdef \{ \theta \in \thetaset: \nabla
w(\theta)=0\}$.
\end{lemma}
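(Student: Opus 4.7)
I will verify \Cref{hyp:PhiandH}, \Cref{hyp:H:deltaH}, and \Cref{hyp:LyapunovFunction} in turn. Measurability in \Cref{hyp:PhiandH} is clear. To bound $\H{\btheta}{\cdot}$, split it into the Kohonen part $\tH{\btheta}{\cdot}$ plus the penalty, which depends only on $\btheta$. Since each $\theta^{(i)} \in \calB(0,\Delta)$, $\norm{\tH{\btheta}{x}} \leq 2\sqrt{N}(\Delta + \norm{x})$; on $\calK_q$ the penalty is bounded by $\sqrt{N}(N-1)\lambda q^3$. Hence $\sup_{\btheta \in \calK_q}\norm{\H{\btheta}{x}}/(1+\norm{x}) < \infty$, and I can take $W$ to be the drift function supplied by \Cref{hyp:qv:kern}, enlarged if needed so that $W(x) \geq 1 + \norm{x}$.

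The core work is \Cref{hyp:H:deltaH}. Since the penalty is $C^{\infty}$ on $\Theta$ and so locally Lipschitz, its contribution to $\H{\bar\btheta}{} - \H{\btheta}{}$ is bounded by $C_\calK \delta$ uniformly in $x$. For the Kohonen piece, I would use the decomposition
\[
(\tH{\bar\btheta}{x} - \tH{\btheta}{x})_i = -2(\bar\theta^{(i)} - \theta^{(i)})\,\un_{C^{(i)}(\bar\btheta)}(x) - 2(\theta^{(i)} - x)\left(\un_{C^{(i)}(\bar\btheta)}(x) - \un_{C^{(i)}(\btheta)}(x)\right).
\]
The first term is uniformly $O(\delta)$ in $x$. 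The second is supported on the set $\mathcal{D}_\delta(\btheta)$ of points $x$ whose Voronoi label changes under at least one $\bar\btheta \in \calK$ with $\norm{\bar\btheta - \btheta} \leq \delta$. Because $\btheta \in \calK_q$ enforces $\min_{i \neq j}\norm{\theta^{(i)} - \theta^{(j)}} \geq 1/q$, an elementary computation shows that a $\delta$-perturbation of the centers moves each bisector hyperplane between $\theta^{(i)}$ and $\theta^{(j)}$ by at most $C_q \delta$ in the normal direction. Thus $\mathcal{D}_\delta(\btheta)$ is contained in the $C_q\delta$-neighborhood of a finite union of hyperplanes, intersected with $\mathrm{supp}\,\pi \subset \calB(0,\Delta)$. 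Boundedness of $\pi$ makes this set of Lebesgue (hence $\pi$-) measure $O(\delta)$, and on it the integrand is bounded by $4\sqrt{N}\Delta$. Assembling both contributions yields $\int \pi(\rmd x)\sup_{\bar\btheta}\norm{\tH{\bar\btheta}{x} - \tH{\btheta}{x}} \leq C_q\delta$, i.e.\ \Cref{hyp:H:deltaH} with $\alpha = 1$; the bisector-displacement estimate, made uniform in $\btheta \in \calK_q$, is the main technical hurdle.

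For \Cref{hyp:LyapunovFunction}, $w$ is $C^1$ on $\Theta$ by the Pag\`es result recalled in the excerpt. A level set $\{w \leq M\}$ satisfies $\tfrac{\lambda}{4}\sum_{i\neq j}\norm{\theta^{(i)} - \theta^{(j)}}^{-2} \leq M$, forcing $\min_{i \neq j}\norm{\theta^{(i)} - \theta^{(j)}} \geq c(M,\lambda,N) > 0$; combined with $\btheta \in \overline{\calB(0,\Delta)}^N$, the level set is closed, bounded in $(\Rset^d)^N$, and stays inside $\Theta$, hence compact. The stationary set $\calL$ is closed as $(\nabla w)^{-1}(\{0\})$; along any sequence in $\calL$ for which two centers collide, the penalty gradient blows up while $\nabla \widetilde w$ remains bounded on $\overline{\calB(0,\Delta)}^N$, contradicting $\nabla w(\btheta_n)=0$; so $\calL$ stays in a compact subset of $\Theta$ and is itself compact. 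Finally, the identity $\nabla w(\btheta) = -h(\btheta)$ recalled in the excerpt immediately gives $\pscal{\nabla w(\btheta)}{h(\btheta)} = -\norm{h(\btheta)}^2$, strictly negative on $\Theta \setminus \calL$.
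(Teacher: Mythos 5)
Your proposal reaches the correct conclusion and follows the same broad architecture as the paper (take $W(x)=1+\norm{x}$ for \Cref{hyp:PhiandH}, treat the penalty as locally Lipschitz, reduce \Cref{hyp:H:deltaH} to the measure of the label-change set, get \Cref{hyp:LyapunovFunction}-(c) from $\nabla w=-h$), but two steps are genuinely different from the paper's, and one of them needs a patch.

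For \Cref{hyp:H:deltaH}, the paper's geometric input, \Cref{lemme:inter:voronoi}, controls the rotation of the bisector's unit normal only at rate $O(\sqrt{\delta})$, because it goes through $\norm{n-n'}^2=2\left(1-\pscal{n}{n'}\right)$ together with the linear bound $1-\pscal{n}{n'}\leq 4\delta/b_\calK$; the containing slab then has width $O(\sqrt{\delta})$ and the paper concludes with $\alpha=1/2$ only. Your claim that the label-change region has measure $O(\delta)$ is nevertheless correct, and can be established even more directly than by tracking hyperplanes: if $x\in C^{(i)}(\btheta)\cap C^{(j)}(\bar{\btheta})\cap\calB(0,\Delta)$ with $\norm{\bar{\btheta}-\btheta}\leq\delta$, then $\norm{x-\theta^{(i)}}\leq\norm{x-\theta^{(j)}}\leq\norm{x-\theta^{(i)}}+2\delta$, and the identity $\norm{x-\theta^{(i)}}^2-\norm{x-\theta^{(j)}}^2=2\pscal{x-m}{\theta^{(j)}-\theta^{(i)}}$ with $m=(\theta^{(i)}+\theta^{(j)})/2$ yields $\left|\pscal{x-m}{n}\right|\leq 4\Delta\delta/b_\calK$ on $\calK=\calK_q$ (where $b_\calK\geq 1/q$ is the minimal separation). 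So the label-change set is covered by $N(N-1)$ slabs of width $O(\delta)$, and \Cref{hyp:H:deltaH} holds with $\alpha=1$, strictly better than the paper's exponent $1/2$. Since \Cref{prop:qv:cv} does not fix $\alpha$, this proves the lemma, and the larger $\alpha$ even relaxes the step-size constraint in \Cref{hyp:sommes:pas:poly}.

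For \Cref{hyp:LyapunovFunction}, you prove (a) explicitly (the paper leaves it implicit) and (c) identically; compactness of $\calL$ is where your route diverges and where your one real gap sits. The paper shows $w$ is bounded on $\calL$ (otherwise a sequence in $\calL$ forces two components to collide, contradicting $\calL\subset\Theta$), whereas you argue that on $\calL$ the penalty gradient equals $-\nabla\widetilde{w}$, which is bounded by $4\Delta\sqrt{N}$, while it must blow up at collisions. The assertion ``the penalty gradient blows up'' is not innocent: when three or more centers collide, the singular pair terms inside a single component $\nabla_{\theta^{(i)}}V=-\lambda\sum_{j\neq i}(\theta^{(i)}-\theta^{(j)})/\norm{\theta^{(i)}-\theta^{(j)}}^4$ can cancel exactly (take $\theta^{(2)},\theta^{(3)}$ symmetric about $\theta^{(1)}$), so componentwise reasoning fails. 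The claim is true for the full gradient vector, and the fix is one line: $V(\btheta)=\frac{\lambda}{4}\sum_{i\neq j}\norm{\theta^{(i)}-\theta^{(j)}}^{-2}$ is homogeneous of degree $-2$, so Euler's identity gives $\pscal{\nabla V(\btheta)}{\btheta}=-2V(\btheta)$, whence $\norm{\nabla V(\btheta)}\geq 2V(\btheta)/(\sqrt{N}\Delta)\geq \lambda\left(\min_{i\neq j}\norm{\theta^{(i)}-\theta^{(j)}}\right)^{-2}/(\sqrt{N}\Delta)$, which diverges at any collision; with this inserted your compactness argument is complete. A last caution on \Cref{hyp:PhiandH}: ``enlarging'' the drift function of \Cref{hyp:qv:kern} so that $W\geq 1+\norm{x}$ is not harmless, since \Cref{hyp:noyau} in the enlarged $W$-norm does not follow from \Cref{hyp:noyau} in the original one; do as the paper does and take $W(x)=1+\norm{x}$, reading \Cref{hyp:qv:kern} as holding for a drift function dominating it.
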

\begin{proof}
  The proof is postponed in  \Cref{proof:sec:vectquant}.
\end{proof}

\Cref{theorem:general} shows that under \Cref{hyp:qv}, \Cref{hyp:qv:kern}, the
penalized $0$-neighbors Kohonen algorithm converges a.s. to a connected
component of $\calL$.

\section{Proofs}
\label{section:proof}
In \Cref{sec:SuccCond:SA}, we start with preliminary results on the stability
and the convergence of stochastic approximation schemes. We provide a new set
of sufficient conditions for the convergence of stable SA algorithms
(see~\Cref{lemma:rappel:AMP:item2}). In \Cref{sec:poisson}, some properties of
the Poisson equations associated to the transition kernels $\{P_\param, \param
\in \Theta\}$ are discussed. In \Cref{sec:proof:theorem:general}, we first
state a control of the perturbations $\H{\theta_n}{X_{n+1}} -h(\theta_n)$ (see
\Cref{prop:controle:Sln}) which is the key ingredient for the proof of
\Cref{theorem:general}; we then conclude \Cref{sec:proof:theorem:general} by
giving the proof of \Cref{theorem:general}.  The proof of
\Cref{prop:controle:Sln} is given in \Cref{sec:proof:prop:controle:Skl}.

\subsection{Stability and Convergence of Stochastic Approximation algorithms}
\label{sec:SuccCond:SA}
Let  $\{\vartheta_n, n \geq 0 \}$ be defined, for all $\vartheta_0 \in \thetaset$ and $n \geq 1$ by:
\begin{equation}
\label{eq:definition:vartheta}
\vartheta_n = \vartheta_{n-1} + \rho_n h(\vartheta_{n-1}) + \rho_n \xi_n
 \eqsp,
\end{equation}
where $\{\rho_n, n \in \Nset \}$ is a sequence of positive numbers and $\{\xi_n, n\in \Nset \}$ is a sequence of $\Rset^d$-vectors.  For any $L \geq
0$, the sequence $\{\tilde \vartheta_{L,k}, k \geq 0 \}$ is defined by $\tilde \vartheta_{L,0} = \vartheta_L$, and for $k \geq 1$,
\begin{equation}
\label{eq:definition:tilde-vartheta}
\tilde \vartheta_{L,k} =\tilde \vartheta_{L,k-1} + \rho_{L+k} h(\vartheta_{L+k-1}) \eqsp.
\end{equation}
\begin{lemma}
  \label{lemma:rappel:AMP:item1}
  Assume \Cref{hyp:LyapunovFunction}.  Let $M_0$ be such that $\{\vartheta_0 \}
  \cup \calL \subset \{\vartheta: w(\vartheta) \leq M_0 \}$.  There exist
  $\delta_\star >0$ and $\lambda_\star >0$ such that for any non-increasing
  sequence $\{\rho_n, n \in \Nset \}$ of positive numbers and any
  $\Rset^d$-valued sequence $\{\xi_n, n\in \Nset \}$
\[
\left( \rho_0 \leq \lambda_\star \ \text{and} \ \sup_{k \geq 1} \norm{
    \sum_{j=1}^k \rho_j \xi_j }\leq \delta_\star \right) \Longrightarrow \left(
  \sup_{k \geq 1} w(\vartheta_k) \leq M_0+1 \right) \eqsp.
\]
\end{lemma}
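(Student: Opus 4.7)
The plan is to argue by contradiction via a first-exit-time analysis, using the auxiliary ``noise-free'' sequence $\tilde\vartheta_{L,\cdot}$ of \eqref{eq:definition:tilde-vartheta} to decouple drift from noise. Set $S_k \eqdef \sum_{j=1}^k \rho_j \xi_j$ with $S_0 \eqdef 0$, so the hypothesis reads $\sup_{k\geq 0}\norm{S_k}\leq \delta_\star$. Writing out \eqref{eq:definition:vartheta}--\eqref{eq:definition:tilde-vartheta} yields the identity $\tilde\vartheta_{L,k}-\vartheta_{L+k}=-(S_{L+k}-S_L)$, so the shifted trajectory $\vartheta_k^\circ \eqdef \tilde\vartheta_{L,k-L}$ (for $k\geq L$) satisfies both $\norm{\vartheta_k^\circ-\vartheta_k}\leq 2\delta_\star$ and the pure-drift recursion $\vartheta_k^\circ-\vartheta_{k-1}^\circ=\rho_k h(\vartheta_{k-1})$.

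For the constants: by \Cref{hyp:LyapunovFunction}-\ref{hyp:LyapunovFunction:compacity}, $K\eqdef\{w\leq M_0+3\}$ is a compact subset of $\thetaset$. Set $B_h\eqdef\sup_K\norm{h}$, $B_{\nabla w}\eqdef\sup_K\norm{\nabla w}$, and let $\Omega,\omega$ be moduli of uniform continuity on $K$ for $w$ and $\nabla w$, respectively. The compact annulus $A\eqdef\{M_0+1/2\leq w\leq M_0+1\}$ is disjoint from $\calL\subset\{w\leq M_0\}$, so \Cref{hyp:LyapunovFunction}-\ref{hyp:LyapunovFunction:scal} and continuity yield $c>0$ with $\pscal{\nabla w(\vartheta)}{h(\vartheta)}\leq -c$ on $A$. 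I then pick $\lambda_\star,\delta_\star$ small enough that $\Omega(\lambda_\star B_h+2\delta_\star)\leq 1/8$, $B_h\,\omega(\lambda_\star B_h+2\delta_\star)\leq c/2$, and $\lambda_\star(B_{\nabla w}B_h+c/2)\leq 1/8$; these choices also force any one-step jump from $\{w\leq M_0+1\}$ to stay inside $K$.

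Now for the contradiction: suppose $T\eqdef\inf\{k\geq 1:\ w(\vartheta_k)>M_0+1\}<\infty$, and set $L\eqdef\max\{k<T:\ w(\vartheta_k)\leq M_0+1/2\}$, well-defined since $w(\vartheta_0)\leq M_0$. The smallness of $\lambda_\star,\delta_\star$ keeps both $\vartheta_k$ and $\vartheta_k^\circ$ in $K$ for $L\leq k\leq T$. Taylor's theorem along $[\vartheta_{k-1}^\circ,\vartheta_k^\circ]$ gives
\[
w(\vartheta_k^\circ)-w(\vartheta_{k-1}^\circ)=\rho_k\pscal{\nabla w(\vartheta_{k-1})}{h(\vartheta_{k-1})}+\rho_k r_k,
\]
with $|r_k|\leq B_h\,\omega\!\left(\norm{\vartheta_{k-1}^\circ-\vartheta_{k-1}}+\rho_k B_h\right)\leq B_h\,\omega(2\delta_\star+\lambda_\star B_h)\leq c/2$. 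For $L+2\leq k\leq T$ the point $\vartheta_{k-1}$ lies in $A$ by the definitions of $T$ and $L$, so the drift term is $\leq -c\rho_k$ and the whole increment is $\leq -c\rho_k/2\leq 0$. Telescoping and handling the step $k=L+1$ by $|w(\vartheta_{L+1}^\circ)-w(\vartheta_L^\circ)|\leq\lambda_\star(B_{\nabla w}B_h+c/2)\leq 1/8$ yields $w(\vartheta_T^\circ)\leq w(\vartheta_L^\circ)+1/8=w(\vartheta_L)+1/8\leq M_0+5/8$. Finally, $w(\vartheta_T)\leq w(\vartheta_T^\circ)+\Omega(2\delta_\star)\leq M_0+3/4<M_0+1$, contradicting the definition of $T$.

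The main obstacle is the Taylor remainder: since only $C^1$-regularity of $w$ is assumed, no quantitative Lipschitz bound on $\nabla w$ is available, and a direct Taylor expansion along $[\vartheta_{k-1},\vartheta_k]$ would trap the single-step noise jump $\rho_k\xi_k$ inside $\omega(\cdot)$, which has no useful pointwise bound from $\sup_k\norm{S_k}\leq\delta_\star$. The decisive trick is to expand along the hatted trajectory $\vartheta^\circ$ instead: its increment is pure drift of magnitude $O(\rho_k)$, so the modulus-of-continuity error collapses to $B_h\,\omega(\lambda_\star B_h+2\delta_\star)$, which is uniformly controllable in $k$ by the smallness of $\lambda_\star$ and $\delta_\star$.
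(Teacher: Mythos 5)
The paper itself gives no argument for this lemma: its ``proof'' is a citation of \cite[Theorem 2.2]{andrieu:moulines:priouret:2005}. Your proposal is therefore a self-contained reconstruction, and its core is the same mechanism as in the cited proof: compare the noisy iterates with the noise-free trajectory $\tilde\vartheta_{L,\cdot}$ of \eqref{eq:definition:tilde-vartheta} (whose distance to $\vartheta_{L+\cdot}$ is exactly a difference of partial noise sums, hence $\leq 2\delta_\star$), exploit the uniform bound $\pscal{\nabla w}{h}\leq -c<0$ on a compact annulus separating $\{w\leq M_0+1/2\}$ from $\{w> M_0+1\}$ (\Cref{hyp:LyapunovFunction}-\ref{hyp:LyapunovFunction:scal} plus compactness), and derive a contradiction from a first-exit-time/last-entrance-time argument. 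The bookkeeping (the times $T$ and $L$, the $1/8$ error budget, the telescoping of the drift-only increments) is correct, and expanding $w$ along the hatted trajectory rather than the noisy one is indeed the essential trick: the per-step error must be $O(\rho_k)$, since only the partial sums $\sum_j\rho_j\xi_j$ — not $\sum_j\rho_j\norm{\xi_j}$ — are controlled.

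Two points need patching. First, your moduli $\Omega,\omega$ are defined for pairs of points of $K=\{w\leq M_0+3\}$, yet you apply them to points not yet known to lie in $K$: the segments $[\vartheta^\circ_{k-1},\vartheta^\circ_k]$ need not stay in $K$ (level sets of $w$ are not convex, and may not even stay in $\thetaset$, where $w$ is undefined), and the claim that your choices ``force any one-step jump from $\{w\leq M_0+1\}$ to stay inside $K$'' is circular as written. The standard repair: since $K$ is compact and $\thetaset$ open, fix $\epsilon>0$ with $K_\epsilon \eqdef \{x\in\Rset^d : \mathrm{d}(x,K)\leq\epsilon\}\subset\thetaset$, take $B_h$, $B_{\nabla w}$, $\Omega$, $\omega$ on $K_\epsilon$, and add the constraint $\lambda_\star B_h+2\delta_\star\leq\epsilon$; then every point, segment and Taylor expansion in your argument lives in $K_\epsilon$ and is legitimate. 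Second, you use continuity of $h$ (finiteness of $B_h$ and extraction of $c>0$); this is not contained in \Cref{hyp:LyapunovFunction} as stated, but it is part of assumption (A1) of the cited theorem and holds wherever the lemma is invoked in the paper (by \Cref{lemme:ecart:h}), so it should simply be added as a hypothesis. A last cosmetic remark: your closing claim that $\rho_k\xi_k$ ``has no useful pointwise bound'' is inexact, since $\norm{\rho_k\xi_k}=\norm{S_k-S_{k-1}}\leq 2\delta_\star$; the true obstruction to a direct expansion along $[\vartheta_{k-1},\vartheta_k]$ is that its remainder is of size $O\bigl(\delta_\star\,\omega(\delta_\star)\bigr)$ per step, independent of $\rho_k$, and thus accumulates over the unboundedly many steps between $L$ and $T$ — which is precisely what your comparison-trajectory expansion eliminates.
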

\begin{proof}
  See \cite[Theorem 2.2]{andrieu:moulines:priouret:2005}.
\end{proof}

\begin{lemma}
\label{lem:determ:GalUnifCont}
Let $g : \Theta \to \Rset$ be a continuous function. For any compact set
$\calK \subset \Theta$ and $\delta>0$, there exists $\eta >0$ such that for all
$\vartheta \in \calK$ and $\vartheta' \in \Theta$ satisfying $|\vartheta-\vartheta'| \leq \eta$, $|g(\vartheta) - g(\vartheta')| \leq \delta$.
\end{lemma}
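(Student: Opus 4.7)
The plan is to reduce the statement to the standard uniform-continuity-on-a-compact-set fact by enlarging $\calK$ slightly inside $\Theta$. The subtlety that makes the lemma non-trivial is that $\vartheta'$ is allowed to range over all of $\Theta$, not just $\calK$, so one cannot directly invoke uniform continuity of $g|_\calK$. The idea is to build a slightly fattened compact set $\calK'$ with $\calK \subset \ints(\calK') \subset \calK' \subset \Theta$, apply uniform continuity of $g$ on $\calK'$, and then choose $\eta$ small enough that any $\vartheta'$ within distance $\eta$ of a point of $\calK$ automatically lies in $\calK'$.

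Concretely, since $\calK$ is compact and $\Theta$ is open in $\Rset^d$, the distance $\epsilon \eqdef \mathrm{d}(\calK, \Theta^c) > 0$ (with the convention that $\epsilon = +\infty$ when $\Theta = \Rset^d$); pick any $\epsilon_0 \in \ooint{0, \epsilon}$ finite. Then the closed $\epsilon_0$-neighborhood
\[
\calK' \eqdef \{ \vartheta \in \Rset^d : \mathrm{d}(\vartheta, \calK) \leq \epsilon_0 \}
\]
is a closed and bounded subset of $\Rset^d$, hence compact, and by construction $\calK' \subset \Theta$. The continuous function $g$ is therefore uniformly continuous on $\calK'$: given $\delta > 0$, there exists $\eta_0 > 0$ such that $\vartheta_1, \vartheta_2 \in \calK'$ with $|\vartheta_1 - \vartheta_2| \leq \eta_0$ implies $|g(\vartheta_1) - g(\vartheta_2)| \leq \delta$.

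Finally, set $\eta \eqdef \min(\eta_0, \epsilon_0) > 0$. If $\vartheta \in \calK$ and $\vartheta' \in \Theta$ satisfy $|\vartheta - \vartheta'| \leq \eta$, then $\mathrm{d}(\vartheta', \calK) \leq |\vartheta - \vartheta'| \leq \epsilon_0$, so $\vartheta' \in \calK'$; since obviously $\vartheta \in \calK \subset \calK'$ and $|\vartheta - \vartheta'| \leq \eta_0$, the uniform continuity estimate yields $|g(\vartheta) - g(\vartheta')| \leq \delta$. The only point that needs any care is justifying $\epsilon > 0$: if $\Theta = \Rset^d$ there is nothing to check, otherwise the distance from a compact set to a disjoint closed set is strictly positive, and $\calK \cap \Theta^c = \emptyset$ because $\calK \subset \Theta$.
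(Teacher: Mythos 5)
Your proof is correct. The paper states this lemma without any proof at all (it is treated as a standard fact, with no citation either), so there is no argument of the paper to compare against; your construction — using $\mathrm{d}(\calK, \Theta^c) > 0$ to build a fattened compact set $\calK'$ with $\calK \subset \calK' \subset \Theta$, applying Heine--Cantor uniform continuity of $g$ on $\calK'$, and taking $\eta = \min(\eta_0, \epsilon_0)$ — is the canonical way to fill this gap, and it correctly handles the one genuine subtlety, namely that $\vartheta'$ is allowed to range over all of $\Theta$ rather than just $\calK$, so uniform continuity of $g$ restricted to $\calK$ alone would not suffice.
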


\begin{lemma}
\label{lem:determ:ContractionV}
Assume \Cref{hyp:LyapunovFunction}-\ref{hyp:LyapunovFunction:scal} and $h$ is continuous.  For any compact set
$\calK$ of $\Theta$ such that $\calK \cap \calL = \emptyset$ and any $\delta
\in \ooint{0, \inf_\calK |\pscal{\nabla w}{h}|}$, there exist $\lambda >0,\beta
>0$ such that for all $ \vartheta \in \calK$, $\rho \leq \lambda$ and $|\xi|
\leq \beta$, $w\left(\vartheta + \rho h(\vartheta) + \rho \xi \right) \leq
w(\vartheta) - \rho \delta$.
\end{lemma}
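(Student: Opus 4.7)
The plan is a standard one-step Lyapunov decrease argument based on a first-order Taylor expansion of $w$ along the update direction. First I would set $m \eqdef \inf_{\vartheta \in \calK}\bigl(-\pscal{\nabla w(\vartheta)}{h(\vartheta)}\bigr)$. Since $\calK$ is a compact subset of $\Theta$ disjoint from $\calL$, and since $\vartheta \mapsto \pscal{\nabla w(\vartheta)}{h(\vartheta)}$ is continuous (as $\nabla w$ is continuous by \Cref{hyp:LyapunovFunction} and $h$ is continuous by assumption), \Cref{hyp:LyapunovFunction}-\ref{hyp:LyapunovFunction:scal} gives $m>0$, and the given $\delta$ satisfies $\delta<m$.

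Next I would fix a compact neighborhood $\calK'\subset \Theta$ of $\calK$ (which exists because $\Theta$ is open and $\calK$ compact), and set $M_h \eqdef \sup_{\calK'}\norm{h}<\infty$, $M_w \eqdef \sup_{\calK'}\norm{\nabla w}<\infty$. Choose once and for all $\beta \in \ooint{0,1}$ so that $\beta M_w \le (m-\delta)/2$; this controls the linear perturbation term $|\pscal{\nabla w(\vartheta)}{\rho\xi}|$. Then by the fundamental theorem of calculus applied to $w$ (since $w$ is continuously differentiable) on the segment from $\vartheta$ to $\vartheta + \rho(h(\vartheta)+\xi)$,
\begin{equation*}
w\bigl(\vartheta + \rho h(\vartheta)+\rho\xi\bigr) - w(\vartheta)
= \rho\pscal{\nabla w(\vartheta)}{h(\vartheta)} + \rho\pscal{\nabla w(\vartheta)}{\xi} + \rho R(\vartheta,\rho,\xi),
\end{equation*}
where the remainder $R(\vartheta,\rho,\xi) = \int_0^1 \pscal{\nabla w(\vartheta+t\rho(h(\vartheta)+\xi))-\nabla w(\vartheta)}{h(\vartheta)+\xi}\,dt$.

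The last step is to choose $\lambda>0$ small enough to make $|R|\le (m-\delta)/2$ uniformly over $\vartheta\in\calK$, $\rho\le\lambda$, $|\xi|\le\beta$. I would pick $\lambda$ first so that $\lambda(M_h+\beta)$ is smaller than the distance from $\calK$ to $\Theta\setminus\calK'$, which guarantees $\vartheta+t\rho(h(\vartheta)+\xi)\in\calK'$ for all $t\in[0,1]$. Then \Cref{lem:determ:GalUnifCont} (applied componentwise to $\nabla w$ on $\calK'$) yields $\eta>0$ such that $\norm{\nabla w(\vartheta')-\nabla w(\vartheta)}\le (m-\delta)/(2(M_h+\beta))$ whenever $\vartheta\in\calK$ and $\norm{\vartheta'-\vartheta}\le\eta$; shrinking $\lambda$ further so that $\lambda(M_h+\beta)\le\eta$ yields the bound on $R$. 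Assembling the three pieces gives $w(\vartheta+\rho h(\vartheta)+\rho\xi)-w(\vartheta)\le -\rho m + \rho(m-\delta)/2+\rho(m-\delta)/2 = -\rho\delta$, as desired.

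No step looks hard in isolation; the only place one can slip is making sure the line segment on which Taylor is applied stays inside $\Theta$ (so that $\nabla w$ is well defined and bounded), which is handled once and for all by introducing the intermediate compact $\calK'$ and tying the choice of $\lambda$ and $\beta$ to the distance $\mathrm{d}(\calK,\Theta\setminus\calK')$.
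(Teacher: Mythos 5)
Your proof is correct. The paper itself gives no in-text argument for this lemma---it simply cites \cite[Lemma 2.1(i)]{andrieu:moulines:priouret:2005}---and your first-order expansion of $w$ along the update, with the remainder controlled uniformly over $\calK$ via \Cref{lem:determ:GalUnifCont} and the intermediate compact $\calK'$ keeping the segment inside $\Theta$, is exactly the standard argument behind that cited result, so there is nothing to flag.
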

\begin{proof}
  See \cite[Lemma 2.1(i)]{andrieu:moulines:priouret:2005}.
\end{proof}

\begin{lemma}
  \label{lem:CvdDeltaUn}
  Assume \Cref{hyp:LyapunovFunction}, $h$ is continuous, $\lim_n \rho_n = 0$,
  $\lim_k \sum_{j=1}^k \rho_j \xi_j$ exists and there exists $M > 0$ such that
  for any $n \geq 0$, $\vartheta_n \in \calK \eqdef \{\theta \in \Theta:
  w(\theta) \leq M \}$. Then
  \begin{enumerate}[label=(\roman*)]
  \item \label{Lem:Utilde:item:i} for any $L \geq 0$ and $k \geq 0$, $\vartheta_{L+k+1}
    - \tilde \vartheta_{L,k+1} = \sum_{j=L}^{L+k}\rho_{j+1} \xi_{j+1}$.
\item \label{Lem:Utilde:item:ii}  $\limsup_{n \to \infty} | \vartheta_{n+1} - \vartheta_n | = 0$.
\item \label{Lem:Utilde:item:iii} for any $\widetilde M > M$, there exists
  $\tilde L$ such that for any $L \geq \tilde L$, $\{\tilde \vartheta_{L,k}, k
  \geq 0\} \subset \widetilde \calK \eqdef \{\theta \in \Theta: w(\theta) \leq
  \widetilde M \}$.
\item \label{Lem:Utilde:item:iv}   $\lim_{L \to \infty} \sup_{l \geq 0} | w(\tilde \vartheta_{L,l}) - w(\vartheta_{L+l})|
  =0$.
  \end{enumerate}
\end{lemma}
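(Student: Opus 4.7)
The plan is to treat the four items in order, relying mainly on the Cauchy property of the convergent series $\sum_j \rho_j \xi_j$ and on local uniform continuity of $w$. For \ref{Lem:Utilde:item:i} I would proceed by induction on $k$: the two defining recursions \eqref{eq:definition:vartheta} and \eqref{eq:definition:tilde-vartheta} share the same drift term $\rho_{L+k+1} h(\vartheta_{L+k})$, so
$\vartheta_{L+k+1} - \tilde \vartheta_{L,k+1} = (\vartheta_{L+k} - \tilde \vartheta_{L,k}) + \rho_{L+k+1} \xi_{L+k+1}$, and starting from $\tilde \vartheta_{L,0} = \vartheta_L$ the formula follows by telescoping. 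For \ref{Lem:Utilde:item:ii} I would use the bound $|\vartheta_{n+1} - \vartheta_n| \leq \rho_{n+1}|h(\vartheta_n)| + |\rho_{n+1}\xi_{n+1}|$: the first term tends to zero because $\rho_n \to 0$ and $h$ is bounded on the compact $\calK$ (continuous image of a compact), and the second tends to zero since convergence of $\sum_j \rho_j \xi_j$ forces its general term to vanish.

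For \ref{Lem:Utilde:item:iii} I would combine \ref{Lem:Utilde:item:i} with a buffer argument. By \ref{Lem:Utilde:item:i}, $\tilde \vartheta_{L,m} - \vartheta_{L+m} = -\sum_{j=L+1}^{L+m} \rho_j \xi_j$, which by Cauchy can be made uniformly small in $m$ by taking $L$ large. To convert this into $\tilde \vartheta_{L,m} \in \widetilde \calK$, I would observe that by \Cref{hyp:LyapunovFunction}-\ref{hyp:LyapunovFunction:compacity} both $\calK$ and $\widetilde \calK$ are compact in $\Theta$, hence closed in $\Rset^d$, and since $M < \widetilde M$ and $w$ is continuous, $\calK \subset \{w < \widetilde M\} \subset \ints(\widetilde \calK)$; therefore $\epsilon_0 \eqdef d(\calK, \Rset^d \setminus \widetilde \calK) > 0$. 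Choosing $\tilde L$ so large that the tail sum is smaller than $\epsilon_0$ for every $L \geq \tilde L$ forces $\tilde \vartheta_{L,m}$ within distance $\epsilon_0$ of $\vartheta_{L+m} \in \calK$, hence inside $\widetilde \calK$.

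Item \ref{Lem:Utilde:item:iv} then follows from uniform continuity of $w$ on the compact $\widetilde \calK$ (an instance of \Cref{lem:determ:GalUnifCont}) combined with \ref{Lem:Utilde:item:iii}: given $\delta > 0$, pick the modulus of continuity $\eta > 0$, then pick $L$ large enough that $\sup_l |\tilde \vartheta_{L,l} - \vartheta_{L+l}| \leq \eta$; this yields $\sup_l |w(\tilde \vartheta_{L,l}) - w(\vartheta_{L+l})| \leq \delta$. The only delicate step is the buffer argument in \ref{Lem:Utilde:item:iii}: it relies on the fact that level sets of $w$ are compact in $\Theta$ (hence closed in the ambient $\Rset^d$) so that the strict nesting $\calK \subset \ints(\widetilde \calK)$ produces a positive Euclidean gap between $\calK$ and $\Rset^d \setminus \widetilde \calK$. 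Everything else reduces to telescoping, boundedness on compacts, and the Cauchy criterion for convergent series.
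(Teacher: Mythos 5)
Your proof is correct, and it keeps the paper's overall skeleton -- telescoping for \ref{Lem:Utilde:item:i}, then converting the Cauchy-type smallness of the tails $\sum_{j \geq L} \rho_{j+1}\xi_{j+1}$ into statements about $w$ -- but two of your steps take genuinely different routes. For item \ref{Lem:Utilde:item:ii}, the paper argues through the auxiliary sequence $\{\tilde\vartheta_{L,k}, k \geq 0\}$ with a three-term triangle inequality; you instead read the increment directly off \eqref{eq:definition:vartheta}, $\norm{\vartheta_{n+1}-\vartheta_n} \leq \rho_{n+1}\sup_{\calK}|h| + \norm{\rho_{n+1}\xi_{n+1}}$, where the first term vanishes because $h$ is bounded on the compact $\calK$ and $\rho_n \to 0$, and the second because the general term of a convergent series tends to zero; this is shorter and does not use item \ref{Lem:Utilde:item:i} at all. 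For item \ref{Lem:Utilde:item:iii}, the paper converts $\sup_{k} \norm{\tilde\vartheta_{L,k}-\vartheta_{L+k}} \leq \eta$ into $w(\tilde\vartheta_{L,k}) \leq \widetilde M$ via the uniform-continuity statement \Cref{lem:determ:GalUnifCont} applied with $\delta = \widetilde M - M$; you instead use a purely metric buffer: since level sets of $w$ are compact (hence closed in $\Rset^d$) and $\Theta$ is open, $\calK \subset \{w < \widetilde M\} \subset \ints(\widetilde\calK)$, so the Euclidean gap $\epsilon_0 = \mathrm{d}(\calK, \Rset^d \setminus \widetilde\calK)$ is positive, and uniform closeness below $\epsilon_0$ forces membership. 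Both arguments are valid; yours has the minor advantage that it places $\tilde\vartheta_{L,k}$ inside $\widetilde\calK \subset \Theta$ directly, so that $w(\tilde\vartheta_{L,k})$ is automatically well defined -- a point left implicit in the paper, whose application of \Cref{lem:determ:GalUnifCont} presupposes that the perturbed point lies in $\Theta$ -- while the paper's route avoids topological considerations about $\Theta$ and reuses a lemma it needs elsewhere anyway. Your item \ref{Lem:Utilde:item:iv} (uniform continuity on the compact $\widetilde\calK$ combined with \ref{Lem:Utilde:item:i} and \ref{Lem:Utilde:item:iii}) is essentially the paper's argument.
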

\begin{proof}
  \begin{enumerate}[label=(\roman*),wide=0pt,labelindent=\parindent]
  \item \eqref{eq:definition:tilde-vartheta} implies that $\tilde \vartheta_{L,k+1} - \vartheta_{L+k+1} = \tilde
    \vartheta_{L,k} - \vartheta_{L+k} - \rho_{L+k+1} \xi_{L+k+1}$ from which
    the proof follows.
  \item Under \Cref{hyp:LyapunovFunction}-\ref{hyp:LyapunovFunction:compacity}
    and the continuity of $h$ on $\Theta$, $\sup_{\calK} |h| < +\infty$.  Since $\vartheta_k \in \calK$
    for any $k$, we get $|\tilde \vartheta_{L,k+1} - \tilde \vartheta_{L,k}|
    \leq \rho_{L+k+1} \sup_\calK |h|$.  Let $\epsilon >0$.  Under the stated
    assumptions, we may choose $K_\epsilon$ such that for any $k \geq
    K_\epsilon$ and  $L \geq 0$, $|\tilde \vartheta_{L,k} - \tilde
      \vartheta_{L,k-1} | \leq \epsilon$ and $L_\epsilon$  such that for any $L \geq L_\epsilon$, $\sup_{l \geq 1} |\sum_{j=L}^{L+l}
    \rho_{j+1} \xi_{j+1} \un_{\vartheta_j \in \calK} | \leq \epsilon$.  By
    \ref{Lem:Utilde:item:i}, for any $k \geq K_\epsilon$ and $L = L_\epsilon$,
\begin{equation*}
  \left|\vartheta_{L+k+1} - \vartheta_{L+k} \right|  \leq \left| \tilde \vartheta_{L,k+1} - \tilde
    \vartheta_{L,k} \right|+ \left|\vartheta_{L+k+1}
    -\tilde \vartheta_{L,k+1} \right|+ \left|\tilde \vartheta_{L,k} - \vartheta_{L+k}\right|  \leq 3 \epsilon \eqsp.
\end{equation*}
\item $\widetilde \calK$ is compact by
  \Cref{hyp:LyapunovFunction}-\ref{hyp:LyapunovFunction:compacity}.  By
  \Cref{lem:determ:GalUnifCont}, there exists $\eta>0$ such that for all
  $\vartheta \in \calK$, $\vartheta' \in \Theta$, $|\vartheta-\vartheta'| \leq
  \eta$, $\left|w(\vartheta) -w(\vartheta') \right| \leq \widetilde{M} - M$.
  There exists $\tilde L$ such
  that for any $L \geq \tilde L$, $\sup_{l \geq 1} |\sum_{j=L}^{L+l} \rho_{j+1}
  \xi_{j+1} \un_{\vartheta_j \in \calK} | \leq \eta$.  By
  \ref{Lem:Utilde:item:i}, $\sup_{L \geq \tilde L} \sup_{k \geq 0} | \tilde
  \vartheta_{L,k} - \vartheta_{L+k}| \leq \eta$.  Since $\vartheta_j \in \calK$
  for any $j \geq 0$, this implies that for any $L \geq \tilde L$ and $k \geq
  0$, $w(\tilde \vartheta_{L,k}) \leq \widetilde M$ and $\tilde \vartheta_{L,k}
  \in \widetilde \calK$.
\item The proof is on the same lines as the proof of \ref{Lem:Utilde:item:iii}.
  \end{enumerate}
\end{proof}

 \begin{lemma}\label{lemma:RobbinsSiegmund:deterministe}
    Let $\{v_n, n\geq 0\}$ and $\{\chi_n, n\geq 0\}$ be non-negative sequences
    and $\{\eta_n, n \geq 0\}$ be a sequence  such that $\sum_n \eta_n$ exists.  If for any
    $n \geq 0$, $v_{n+1}\leq v_n - \chi_n + \eta_n$ then $\sum_n \chi_n <
    \infty$ and $\lim_n v_n$ exists.
\end{lemma}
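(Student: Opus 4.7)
The statement is a deterministic analogue of the Robbins-Siegmund lemma, and both conclusions follow from essentially one manipulation of the recursive inequality. I would carry out the proof in two short steps.

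First I would telescope the bound $v_{n+1} \leq v_n - \chi_n + \eta_n$: summing over $k = 0, \dots, n$ gives
\begin{equation*}
v_{n+1} + \sum_{k=0}^n \chi_k \leq v_0 + \sum_{k=0}^n \eta_k.
\end{equation*}
Since $\sum_n \eta_n$ exists, the partial sums on the right are bounded; combined with $v_{n+1} \geq 0$, this yields $\sum_{k=0}^n \chi_k \leq v_0 + \sup_m \sum_{k=0}^{m} \eta_k < \infty$ uniformly in $n$. As $\chi_k \geq 0$, the series $\sum_n \chi_n$ converges. This gives the first conclusion.

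Next, to get existence of $\lim_n v_n$, I introduce the tail $R_n \eqdef \sum_{k=n}^\infty \eta_k$, which is well-defined and satisfies $R_n \to 0$ and $R_n - R_{n+1} = \eta_n$. Set $a_n \eqdef v_n + R_n$. Then
\begin{equation*}
a_{n+1} = v_{n+1} + R_{n+1} \leq v_n - \chi_n + \eta_n + R_{n+1} = v_n + R_n - \chi_n = a_n - \chi_n \leq a_n,
\end{equation*}
so $\{a_n\}$ is non-increasing. Moreover, since $R_n \to 0$ it is in particular bounded below, so using $v_n \geq 0$, we get $a_n \geq \inf_m R_m > -\infty$. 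A non-increasing sequence bounded below converges, so $\lim_n a_n$ exists; since $R_n \to 0$, we conclude that $\lim_n v_n = \lim_n (a_n - R_n)$ exists as well.

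There is no real obstacle here — the only small point to get right is that $\sum_n \eta_n$ is assumed to exist but not necessarily to converge absolutely, which is why I work with the tail $R_n$ rather than $\sum |\eta_n|$. Everything else is elementary algebra on the telescoped inequality.
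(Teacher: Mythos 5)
Your proof is correct and takes essentially the same route as the paper: both compensate $v_n$ by the tail $\sum_{k \geq n} \eta_k$ to obtain a non-increasing sequence bounded below (the paper merely adds the constant $M = -\inf_n \sum_{k \geq n} \eta_k$ to make it non-negative rather than just bounded below), from which convergence of $v_n$ follows. Your separate telescoping step for $\sum_n \chi_n < \infty$ is a harmless redundancy, since your inequality $a_{n+1} \leq a_n - \chi_n$ already yields it exactly as in the paper.
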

\begin{proof}
  Set $w_n =v _n + \sum_{k \geq n} \eta_k +M$ with $M \eqdef - \inf_n \sum_{k
    \geq n} \eta_k$ so that $\inf_n w_n \geq 0$. Then
\[
0 \leq w_{n+1} \leq v_n - \chi_n + \eta_n + \sum_{k \geq n+1} \eta_k +M \leq
w_n -\chi_n \eqsp.
\]
Hence, the sequence $\{w_n, n \geq 0\}$ is non-negative and non-increasing;
therefore it converges.  Furthermore, $0 \leq \sum_{k=0}^n \chi_k \leq w_0$ so
that $\sum_n \chi_n < \infty$. Therefore, the convergence of $\{w_n, n\geq 0\}$
also implies the convergence of $\{v_n, n\geq 0\}$. This concludes the proof.
\end{proof}

\begin{proposition}
  \label{lemma:rappel:AMP:item2}
  Assume \Cref{hyp:LyapunovFunction}.  Let $\{\rho_n, n \in \Nset
  \}$ be a non-increasing sequence of positive numbers and $\{\xi_n, n\in \Nset
  \}$ be a sequence of $\Rset^d$-vectors.   Assume
  \begin{enumerate}[label=(C-\roman*)]
  \item \label{lemma:rappel:AMP:item2:hyp1} $h: \Theta \to \Rset^d$ is continuous.
  \item \label{lemma:rappel:AMP:item2:hyp2}  $\{\vartheta_k, k \in \Nset \} \subset \calK \eqdef \{ \theta \in \Theta: w(\theta) \leq M \}$.
  \item \label{lemma:rappel:AMP:item2:hyp3} $\sum_k \rho_k = +\infty$ and $\lim_k \rho_k = 0$.
  \item \label{lemma:rappel:AMP:item2:hyp4} $\lim_{k \to \infty} \sum_{j=1}^k \rho_j \xi_j$ exists.
  \item \label{lemma:rappel:AMP:item2:hyp5} one of the following conditions
    \begin{enumerate}[label=(\Alph*)]
    \item \label{lemma:rappel:AMP:item2:hyp5a} $w(\calL)$ has an empty interior
    \item \label{lemma:rappel:AMP:item2:hyp5b} $\nabla w$ is locally Lipschitz
      on $\Theta$, and the series $\sum \rho_j \pscal{\nabla w(\vartheta_j)}{
        \xi_j}$ and  $\sum_j \rho_{j}^2 \norm{\xi_{j}}^2$ converge.
    \end{enumerate}
  \end{enumerate}
  Then $\{\vartheta_n, n \geq 0 \}$ converges to a connected component of
  $\calL$.
\end{proposition}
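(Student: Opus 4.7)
The plan is to combine the ODE method with the Lyapunov function $w$ in order to show that the accumulation set $\mathcal{A}$ of the bounded sequence $\{\vartheta_n\}$ is contained in $\calL$. Once this is established, $\mathcal{A}$ connected (as a consequence of vanishing increments) and $\calL$ closed imply that $\mathcal{A}$ lies in a single connected component $C$ of $\calL$; then $\mathrm{d}(\vartheta_n,\calL) \leq \mathrm{d}(\vartheta_n,\mathcal{A}) \to 0$ yields the stated convergence. For the preliminaries common to both cases, convergence of $\sum_j \rho_j \xi_j$ forces $\rho_n \xi_n \to 0$; together with $\rho_n \to 0$ and $\sup_\calK \|h\| < \infty$ (continuity on compact), one obtains $\|\vartheta_{n+1} - \vartheta_n\| \to 0$, which combined with boundedness ensures $\mathcal{A}$ is non-empty, compact, connected, with $\mathrm{d}(\vartheta_n,\mathcal{A}) \to 0$. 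Using the noise-free auxiliary sequence $\tilde\vartheta_{L,k}$ of \Cref{lem:CvdDeltaUn} and the convergence of $\sum_j \rho_j \xi_j$, the true trajectory asymptotically shadows the flow $\phi_t$ of $\dot\vartheta = h(\vartheta)$, so $\mathcal{A}$ is compact and invariant under $\phi_t$.

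In case~(B), local Lipschitzness of $\nabla w$ yields a second-order Taylor expansion
\begin{equation*}
w(\vartheta_{n+1}) \leq w(\vartheta_n) + \rho_{n+1}\pscal{\nabla w(\vartheta_n)}{h(\vartheta_n)} + \rho_{n+1}\pscal{\nabla w(\vartheta_n)}{\xi_{n+1}} + C\rho_{n+1}^2 \|h(\vartheta_n)+\xi_{n+1}\|^2.
\end{equation*}
The first summand is $-\chi_n \leq 0$, the second has convergent partial sums by hypothesis, and the quadratic remainder is summable using $\sum_j \rho_j^2 \|\xi_j\|^2 < \infty$, $\sup_\calK \|h\| < \infty$, and $\sum_j \rho_j^2 < \infty$ (which is in force in the intended application where $\beta > 1/2$). \Cref{lemma:RobbinsSiegmund:deterministe} then delivers $w(\vartheta_n) \to w^\star$ and $\sum_n \chi_n < \infty$, so by continuity of $w$ the entire set $\mathcal{A}$ has constant $w$-value $w^\star$. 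In case~(A), continuity of $\nabla w$ and $\|\vartheta_{n+1} - \vartheta_n\| \to 0$ give $|w(\vartheta_{n+1}) - w(\vartheta_n)| \to 0$; the set of accumulation values of $\{w(\vartheta_n)\}$ is then a closed interval equal to $w(\mathcal{A})$, and a LaSalle-type argument on the compact flow-invariant set $\mathcal{A}$ (applied forward and backward in time, exploiting the fact that $-w$ is Lyapunov for the backward flow) shows $w(\mathcal{A}) \subset w(\calL)$. Empty interior of $w(\calL)$ forces the interval $w(\mathcal{A})$ to collapse to a singleton $\{w^\star\}$, so again $w \equiv w^\star$ on $\mathcal{A}$.

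With $w \equiv w^\star$ on the flow-invariant set $\mathcal{A}$, for every $\vartheta \in \mathcal{A}$ the map $t \mapsto w(\phi_t(\vartheta))$ is constant; differentiating at $t=0$ gives $\pscal{\nabla w(\vartheta)}{h(\vartheta)} = 0$, i.e., $\mathcal{A} \subset \calL$, and the conclusion follows as above. The main obstacle I expect is case~(A): converting the empty-interior assumption on $w(\calL)$ into convergence of $w(\vartheta_n)$ via the inclusion $w(\mathcal{A}) \subset w(\calL)$ is delicate and relies on exploiting both forward and backward invariance of $\mathcal{A}$ together with the Lyapunov property (identifying the $\omega$- and $\alpha$-limits of points of $\mathcal{A}$ as subsets of $\calL$). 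Case~(B) is conceptually simpler but requires careful bookkeeping of the second-order remainder in the Taylor expansion so that Robbins--Siegmund applies.
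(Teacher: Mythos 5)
Your overall route (ODE method: show the accumulation set $\mathcal{A}$ is compact, connected and invariant, then use the Lyapunov function to force $\mathcal{A}\subset\calL$) is genuinely different from the paper's, which never invokes limit-set dynamics: the paper first proves that $\lim_n w(\vartheta_n)$ exists, and then runs a purely discrete shadowing argument (its Steps 1--2, via \Cref{lem:CvdDeltaUn} and \Cref{lem:determ:ContractionV}) to convert convergence of $w(\vartheta_n)$ into $\mathrm{d}(\vartheta_n,\calL)\to 0$. Your case (B) is essentially sound and in fact coincides with the paper's treatment of that case (second-order Taylor bound plus \Cref{lemma:RobbinsSiegmund:deterministe}). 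One technical caveat applies throughout: $h$ is only assumed continuous, so ``the flow $\phi_t$ of $\dot\vartheta=h(\vartheta)$'' is not well defined (Peano gives existence of solutions, not uniqueness); invariance of $\mathcal{A}$ must be phrased as ``through every point of $\mathcal{A}$ passes a full solution curve contained in $\mathcal{A}$'', which still supports your differentiation step but is itself a nontrivial Arzel\`a--Ascoli argument that you only sketch.

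The genuine gap is case (A). Invariance of $\mathcal{A}$ plus LaSalle applied forward and backward does \emph{not} yield $w(\mathcal{A})\subset w(\calL)$. For $\vartheta\in\mathcal{A}\setminus\calL$ lying on a full solution $z(\cdot)\subset\mathcal{A}$, that argument gives $c_+ \eqdef \lim_{t\to+\infty}w(z(t))\in w(\calL)$ and $c_-\eqdef\lim_{t\to-\infty}w(z(t))\in w(\calL)$ with $c_+<w(\vartheta)<c_-$: it sandwiches $w(\vartheta)$ between two values of $w(\calL)$ but does not place $w(\vartheta)$ itself in $w(\calL)$. Concretely, a heteroclinic orbit $\gamma$ joining stationary points $p,q$ with $w(p)=1$, $w(q)=0$, together with $\{p,q\}$, is a compact, connected, invariant set on which $w$ takes every value of $[0,1]$, even though $w(\calL)$ may equal $\{0,1\}$, a set with empty interior; your argument cannot exclude such an $\mathcal{A}$. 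Ruling it out requires more than invariance --- e.g., internal chain transitivity of limit sets of perturbed recursions (Bena\"im's theorem), a substantial result you neither prove nor cite --- or a direct argument on the sequence itself showing $\lim_n w(\vartheta_n)$ exists under the empty-interior hypothesis, which is exactly what the paper does by invoking \cite[Theorem 2.3]{andrieu:moulines:priouret:2005} (a level-crossing argument: if $w(\vartheta_n)$ had two accumulation values, it would have to cross a level $c\notin w(\calL)$ infinitely often in both directions, contradicting the strict decrease of $w$ away from $\calL$ once the noise is small). To repair your proof, either establish chain transitivity of $\mathcal{A}$ or replace the LaSalle step in case (A) by such a direct proof of convergence of $w(\vartheta_n)$.
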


\begin{proof}
Assume first that $\lim_{n \to \infty} w(\vartheta_n)$ exists.

  $\blacktriangleright$ {\tt Step 1.}  For $\alpha>0$, let $\calL_\alpha \eqdef
  \{\theta \in \Theta: \mathrm{d}(\theta, \calL) < \alpha \}$ be the
  $\alpha$-neighborhood of $\calL$.  Let $\epsilon >0$. We prove that there
  exist $L_{\epsilon}$ and $\delta_1>0$ such that for any $L \geq L_{\epsilon}$,
\begin{enumerate}[label=(\alph*)]
\item \label{theo:determ:cvgU:step1c} $\sup_{k \geq 0} \left| \tilde \vartheta_{L,k} - \vartheta_{L+k}\right| \leq \epsilon$ and
$\sup_{k \geq 0} \left| w(\tilde \vartheta_{L,k}) - w(\vartheta_{L+k})\right| \leq \epsilon$.
\item \label{theo:determ:cvgU:step1b} for any $k \geq 0$, it holds: $\tilde
  \vartheta_{L,k} \notin \calL_\alpha \Longrightarrow w(\tilde \vartheta_{L,k+1}) - w(\tilde
  \vartheta_{L,k}) \leq - \rho_{L+k+1} \delta_1$.
\item \label{theo:determ:cvgU:step1a} the sequence $\{\tilde \vartheta_{L,k}, k \geq
  0\}$ is infinitely often in $\calL_\alpha$.
\end{enumerate}
Let $\widetilde M > M$ and set $\widetilde \calK \eqdef \{\theta \in \Theta:
w(\theta) \leq \widetilde M \}$.
Note that by
\Cref{hyp:LyapunovFunction}-\ref{hyp:LyapunovFunction:compacity}, $\widetilde
\calK \subset \Theta$ is  compact  and since $\calL_\alpha$ is open,
$\widetilde\calK^\alpha \eqdef \widetilde \calK \setminus \calL_\alpha$ is
compact and $ \widetilde \calK^\alpha \cap \calL =
\emptyset$.  By \Cref{lem:determ:ContractionV}, there exist $\delta_1 >0$,
$\lambda_1>0$, $\beta_1>0$ such that
\begin{equation}\label{eq:CvgU:tool3}
\left(\vartheta \in \widetilde \calK^\alpha, \rho \leq \lambda_1, |\xi| \leq \beta_1 \right)
\Longrightarrow w\left(\vartheta + \rho h(\vartheta) + \rho \xi \right) \leq w(\vartheta) - \rho
\delta_1 \eqsp.
\end{equation}
By \Cref{lem:determ:GalUnifCont}, there exists $\delta_2 \in
\ooint{0,\epsilon}$ such that
\begin{equation}\label{eq:CvgU:tool4}
\left(\vartheta \in \calK, \vartheta' \in \Theta, |\vartheta-\vartheta'| \leq \delta_2 \right)\Longrightarrow
\left( \left| h(\vartheta) - h(\vartheta')\right| \leq \beta_1, \ \  \left| w(\vartheta) - w(\vartheta')\right| \leq  \epsilon \right)\eqsp.
\end{equation}
By \Cref{lem:CvdDeltaUn}-\ref{Lem:Utilde:item:iii}, \ref{lemma:rappel:AMP:item2:hyp3} and \ref{lemma:rappel:AMP:item2:hyp4},
there exists $\widetilde L$ such that for any $L \geq \widetilde L$, $\{ \tilde
\vartheta_{L,k}, k \geq 0\} \subset \widetilde \calK$ and
\begin{equation}\label{eq:CvgU:tool2}
\sup_{k \geq 0} \rho_{L+k} \leq \lambda_1, \qquad \sup_{l \geq 1} \left|\sum_{j=L}^{L+l} \rho_{j+1} \xi_{j+1} \un_{\vartheta_j \in
    \calK} \right| \leq \delta_2 \eqsp.
\end{equation}
\begin{enumerate}[label=(\alph*),wide=0pt,labelindent=\parindent]
\item follows from \Cref{lem:CvdDeltaUn}-\ref{Lem:Utilde:item:i} and \eqref{eq:CvgU:tool2}.
\item Let $L \geq \tilde{L}$, $k \geq 0$ and $\tilde \vartheta_{L,k} \not \in \calL_\alpha$. The proof follows from \eqref{eq:CvgU:tool3} and
\[
\tilde \vartheta_{L,k+1} = \tilde \vartheta_{L,k} + \rho_{L+k+1} h\left( \tilde
  \vartheta_{L,k}\right) + \rho_{L+k+1} \left( h\left( \vartheta_{L+k}\right) - h\left(
    \tilde \vartheta_{L,k}\right) \right) \eqsp,
\]
using  \eqref{eq:CvgU:tool4} and \eqref{eq:CvgU:tool2}.
\item The proof is by contradiction. Let $L \geq \widetilde L$, $k \geq 0$ and assume that for any $j \geq 0$, $\tilde
\vartheta_{L,k+j} \in \widetilde \calK^\alpha$. By \ref{theo:determ:cvgU:step1b}, for any $j,k \geq 0$,
$ w\left(\tilde \vartheta_{L,k+j+1} \right) \leq w(\tilde \vartheta_{L,k+j}) - \rho_{L+k+j+1}
\delta_1$  which  implies under \ref{lemma:rappel:AMP:item2:hyp3} that
$\lim_{j \to \infty} w(\tilde \vartheta_{L,k+j}) = -\infty$.  Since $w$ is continuous and  nonnegative and $\calK$ compact,
 $\inf_{\widetilde \calK} w \geq 0$. This is a contradiction.
\end{enumerate}
$\blacktriangleright$ {\tt Step 2.}  Let $\alpha >0$ and $\epsilon >0$.  By
Step 1 and Lemma~\ref{lem:CvdDeltaUn}-\ref{Lem:Utilde:item:iv}, there exists
$L_{\epsilon}$ such that for any $L \geq L_{\epsilon}$ and $k \geq 0$, $\sigma_{L,k}
\eqdef \inf \{j \geq 0, \tilde \vartheta_{L, k+j} \in \calL_\alpha \}$ is
finite, $\sup_{k \geq 0} \left| \tilde \vartheta_{L,k} - \vartheta_{L+k}\right| \leq
  \epsilon$, $\sup_{k \geq 0} \left| w(\tilde \vartheta_{L,k}) -
    w(\vartheta_{L+k})\right| \leq \epsilon$ and
\begin{equation*}
\delta_1 \sum_{\ell=1}^{\sigma_{L,k}} \rho_{L+k+\ell} \leq w(\tilde
  \vartheta_{L,k}) - w(\tilde \vartheta_{L,k+\sigma_{L,k}})
 \eqsp.
\end{equation*}
Hence, for any $k \geq 0$, using $| \tilde \vartheta_{L, \ell} - \tilde \vartheta_{L,\ell-1}|
\leq \rho_{L+\ell} \sup_{\calK} |h|$,
\begin{align*}
  \mathrm{d}\left(\vartheta_{L+k}, \calL_\alpha \right) & \leq \left|
    \vartheta_{L+k} - \tilde \vartheta_{L,k+\sigma_{L,k}} \right| \leq \left|
    \vartheta_{L+k} - \tilde \vartheta_{L,k} \right| + \left|
    \tilde \vartheta_{L,k} - \tilde \vartheta_{L,k+\sigma_{L,k}} \right| \\
  & \leq \epsilon + \sum_{\ell=1}^{\sigma_{L,k}} \left| \tilde
    \vartheta_{L,k+\ell} -
    \tilde \vartheta_{L,k+\ell-1} \right| \leq \epsilon + \sup_{\calK} |h| \,
  \sum_{\ell=1}^{\sigma_{L,k}} \rho_{L+k+\ell} \\
  & \leq \epsilon + \delta_1^{-1} \sup_{\calK} |h| \, \sup_{\ell \geq 0} \left|
    w(\tilde \vartheta_{L,k}) - w(\tilde \vartheta_{L,k+\ell}) \right| \eqsp,  \\
  &\leq \epsilon + 2 \epsilon \delta_1^{-1} \sup_{\calK} |h| + \delta_1^{-1}
  \sup_{\calK} |h| \, \sup_{\ell \geq 0} \left| w(\vartheta_{L+k}) -
    w(\vartheta_{L+k+\ell}) \right| \eqsp.
\end{align*}
This proves that $\lim_n \mathrm{d}(\vartheta_n, \calL) =0$ since $\lim_n
w(\vartheta_n)$ exists. Finally, since by \Cref{lem:CvdDeltaUn}-\ref{Lem:Utilde:item:ii},
$\lim_n |\vartheta_n - \vartheta_{n-1}|
=0$, the sequence
$\{\vartheta_n, n \geq 0 \}$ converges to a connected component of $\calL$.

$\blacktriangleright$ {\tt Step 3.} We now prove that $\lim_n w(\vartheta_n)$
exists.  Under \ref{lemma:rappel:AMP:item2:hyp5}-\ref{lemma:rappel:AMP:item2:hyp5a}, the proof
follows from~\cite[Theorem 2.3]{andrieu:moulines:priouret:2005}. We prove that
this limit exists under \ref{lemma:rappel:AMP:item2:hyp5}-\ref{lemma:rappel:AMP:item2:hyp5b}.

By \Cref{hyp:LyapunovFunction}-\ref{hyp:LyapunovFunction:compacity},
\ref{lemma:rappel:AMP:item2:hyp2} and
\Cref{lem:CvdDeltaUn}-\ref{Lem:Utilde:item:ii}, there exist $N$ and a compact
set $\widetilde \calK$ of $\Theta$ such that $\calK \subseteq \widetilde \calK$
and for any $n \geq N$ and $t \in \ccint{0,1}$, $\vartheta_n +
t(\vartheta_{n+1}-\vartheta_n) \in \widetilde \calK$. By \ref{lemma:rappel:AMP:item2:hyp5}-\ref{lemma:rappel:AMP:item2:hyp5b},
there exists a
constant $C$ such that for $\vartheta,\vartheta' \in \widetilde \calK$, $|\nabla w(\vartheta) - \nabla
w(\vartheta') | \leq C |\vartheta-\vartheta'|$ showing that, for $n \geq N$,
  \begin{align*}
    w(\vartheta_{n+1})
    & \leq w(\vartheta_n) + \pscal{\nabla w(\vartheta_n)}{\vartheta_{n+1} -
      \vartheta_n} + C/2 \left| \vartheta_{n+1} - \vartheta_n \right|^2.
  \end{align*}
Using \eqref{eq:definition:vartheta}, we obtain
\begin{align*}
  & \pscal{\nabla w(\vartheta_n)}{\vartheta_{n+1} - \vartheta_n} = \rho_{n+1} \pscal{\nabla
    w(\vartheta_n)}{h(\vartheta_n)} + \rho_{n+1} \pscal{\nabla w(\vartheta_n)}{ \xi_{n+1}} \\
  & | \vartheta_{n+1} - \vartheta_n |^2 \leq 2 \rho^2_{n+1} \left\{ \left( \sup_\calK |h| \right)^2 + |\xi_{n+1}|^2  \right\}.
\end{align*}
This yields for any $n \geq N$,
\begin{multline*}
  w(\vartheta_{n+1}) \leq w(\vartheta_n) - \rho_{n+1} \left| \pscal{\nabla w(\vartheta_n)}{h(\vartheta_n)}
  \right| + \rho_{n+1} \pscal{\nabla w(\vartheta_n) }{\xi_{n+1}} \\
  + C \, \rho_{n+1}^2 \left(\left( \sup_\calK |h| \right)^2 + |\xi_{n+1}|^2
  \right) \eqsp.
\end{multline*}
\Cref{lemma:RobbinsSiegmund:deterministe} concludes the proof.
\end{proof}
\subsection{Regularity in $\param$  of the solution to the Poisson equation}
\label{sec:poisson}
Under the assumptions \Cref{hyp:PhiandH} and \Cref{hyp:noyau}, for any $\param
\in \thetaset$, there exists a function $g$ solving the Poisson equation
\begin{equation}
  \label{eq:poisson:equation}
g \mapsto   \H{\param}{} - \pi_\param \H{\param}{} = g - P_\param g
\eqsp.
\end{equation}
This solution, denoted by $g_\param$, is unique up to an additive constant and
given by $g_\param(x) \eqdef \sum_{n \geq 0} \left\{ P_\param^n \H{\param}{x} -
  \pi_\param \H{\param}{} \right\}$.  Finally, for any compact set $\calK$ of
$\thetaset$,
\begin{equation}\label{eq:gtheta:controlK}
\sup_{\param \in \calK} \left| g_\param \right|_W \leq \sup_{\param \in \calK} | \H{\param}{}|_W \
\sum_{n \geq 0} \sup_{x\in\X} \frac{\sup_{\param \in \calK} \| P_\param^n(x,\cdot) - \pi_\param
\|_W}{W(x)} \, < \infty \eqsp.
\end{equation}
\begin{lemma} \label{lemme:ecart:puissance}
  Assume \Cref{hyp:noyau}-\ref{hyp:noyau:ergo:geom}. For any compact set
  $\calK \subset \thetaset$, there exists a constant $C$ such that for any
  $\param, \param' \in \calK$
  \begin{align*}
  \sup_{n \geq 0} \sup_{x \in \X} \frac{\|P_{\param}^n(x,.) - P_{\param'}^n(x,.)\|_{W}}{W(x)} \leq C D_W(\param, \param') \eqsp,
  \end{align*}
where $D_W$ is defined by \eqref{def:D}.
 \end{lemma}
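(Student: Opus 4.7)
The plan is to use the standard telescoping decomposition for perturbed kernels,
$$P_\param^n - P_{\param'}^n = \sum_{k=0}^{n-1} P_\param^k \, (P_\param - P_{\param'}) \, P_{\param'}^{n-1-k},$$
applied to a test function $f$ with $|f|_W \leq 1$. The first trick is a centering step: write $P_{\param'}^{n-1-k} f = g_k + \pi_{\param'}(f)$ with $g_k \eqdef P_{\param'}^{n-1-k} f - \pi_{\param'}(f)$; the constant part is annihilated by $P_\param - P_{\param'}$, so only $g_k$ matters in each summand.

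From the geometric ergodicity in \Cref{hyp:noyau}-\ref{hyp:noyau:ergo:geom}, I get $|g_k|_W \leq C \cgeom^{n-1-k}$ uniformly in $\param' \in \calK$. The definition of $D_W$ in \eqref{def:D} then yields, for any $y \in \X$,
$$\bigl|(P_\param - P_{\param'}) g_k(y)\bigr| \leq |g_k|_W \, \bigl\| P_\param(y,\cdot) - P_{\param'}(y,\cdot) \bigr\|_W \leq C \cgeom^{n-1-k} \, D_W(\param,\param') \, W(y).$$
The next step is to apply $P_\param^k$ and absorb it via a uniform bound $\sup_{k \geq 0,\ \param \in \calK} P_\param^k W(x) / W(x) < \infty$. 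This bound is itself a short consequence of \Cref{hyp:noyau}-\ref{hyp:noyau:ergo:geom}: since $|W|_W = 1$, one has $|P_\param^k W(x) - \pi_\param(W)| \leq C \cgeom^k W(x)$, and combining with $\sup_{\param \in \calK} \pi_\param(W) < \infty$ and $W \geq 1$ gives $P_\param^k W(x) \leq C' W(x)$ for a constant $C'$ independent of $k$ and of $\param \in \calK$.

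Putting these ingredients together produces
$$\bigl| P_\param^k (P_\param - P_{\param'}) g_k (x) \bigr| \leq C C' \cgeom^{n-1-k} \, D_W(\param,\param') \, W(x),$$
and summing the geometric series over $k = 0, \dots, n-1$ gives an upper bound by $C C'(1 - \cgeom)^{-1} \, D_W(\param,\param') \, W(x)$, which is independent of $n$ and $x$. Taking the supremum over $f$ with $|f|_W \leq 1$, dividing by $W(x)$, and then the supremum over $x$ and $n$ yields the stated inequality. No genuine obstacle is expected: the argument is purely mechanical once the telescoping identity and the centering step are set up; the only vigilance required is ensuring that the constants $C$, $C'$, $\cgeom$, and $\sup_{\param \in \calK} \pi_\param(W)$ can all be chosen uniformly in $\param, \param' \in \calK$, which is exactly what the compact-indexed formulation of \Cref{hyp:noyau} provides.
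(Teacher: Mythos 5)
Your proof is correct and takes essentially the same route as the paper's: a telescoping decomposition of $P_{\param}^n - P_{\param'}^n$, centering of the inner iterate around the stationary distribution so that $D_W$ controls the middle factor, geometric ergodicity from \Cref{hyp:noyau}-\ref{hyp:noyau:ergo:geom} for the decay in $n$, and the same assumption again to bound the outer term $P^k W$ uniformly. The only (immaterial) difference is that the paper writes the mirror-image decomposition $\sum_{j} P_{\param'}^{j}(P_{\param}-P_{\param'})P_{\param}^{n-j-1}$ with centering at $\pi_{\param}$, whereas you put $P_{\param}^{k}$ outside and center at $\pi_{\param'}$; both yield the same uniform-in-$(n,x)$ bound.
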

 \begin{proof}
   For any measurable function $f$ such that $|f|_W \leq 1$, it holds
  \begin{align*}
   P_{\param}^n f(x) - P_{\param'}^n f(x) & = \sum_{j=0}^{n-1} P_{\param'}^j (P_{\param} - P_{\param'}) \left( P_{\param}^{n-j-1} f(x) - \pi_{\param}(f) \right) \eqsp.
  \end{align*}
  For any $0 \leq j \leq n-1$,
  \begin{align*}
   \left| P_{\param'}^j (P_{\param} - P_{\param'}) \left( P_{\param}^{n-j-1} f(x) - \pi_{\param}(f) \right) \right| &\leq P_{\param'}^j W(x) \left|(P_{\param} - P_{\param'}) \left( P_{\param}^{n-j-1} f - \pi_{\param}(f) \right) \right|_W \\
   & \leq  D_W(\param,\param') \ P_{\param'}^j W(x) \left|P_{\param}^{n-j-1} f - \pi_{\param}(f) \right|_W \eqsp.
  \end{align*}
  By \Cref{hyp:noyau}-\ref{hyp:noyau:ergo:geom}, there exist $C>0$ and $\cgeom
  \in \ooint{0,1}$ such that for any $\param, \param'\in \calK$,
\begin{align*}
  P_{\param'}^j W(x) \left|P_{\param}^{n-j-1} f - \pi_{\param}(f) \right|_W
  \leq C \left( \cgeom^j W(x) + \pi_{\param'}(W) \right) \cgeom^{n-j-1} \eqsp.
\end{align*}
This concludes the proof.
 \end{proof}

 \begin{lemma} \label{lemme:ecart:pi}
   Assume \Cref{hyp:noyau}-\ref{hyp:noyau:loi_inv},\ref{hyp:noyau:ergo:geom}.  For any compact
   set $\calK \subset \thetaset$, there exists $C >0$ such that for any $\param, \param' \in \calK$, $ \|
   \pi_{\param} - \pi_{\param'} \|_W \leq C D_W(\param,\param')$.
\end{lemma}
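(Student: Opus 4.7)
The plan is to exploit the invariance $\pi_\param = \pi_\param P_\param^n = \pi_{\param'} P_{\param'}^n$ valid for every $n \geq 0$ and then trade off geometric ergodicity (which contracts the left-hand side) against the uniform bound of \Cref{lemme:ecart:puissance} (which estimates the discrepancy of the iterated kernels).

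Fix a compact $\calK \subset \thetaset$ and $\param,\param' \in \calK$, and let $f$ be measurable with $|f|_W \leq 1$. First, I would observe that $\|\pi_\param - \pi_{\param'}\|_W$ is finite: indeed, since $|f|_W \leq 1$, we have $|(\pi_\param - \pi_{\param'})(f)| \leq \pi_\param(W) + \pi_{\param'}(W)$, and this is uniformly bounded on $\calK$ by \Cref{hyp:noyau}-\ref{hyp:noyau:ergo:geom}. Next, using invariance I decompose, for any $n \geq 0$,
\[
(\pi_\param - \pi_{\param'})(f) = \pi_\param P_\param^n f - \pi_{\param'} P_{\param'}^n f
= (\pi_\param - \pi_{\param'})\!\bigl(P_\param^n f - \pi_\param(f)\bigr) + \pi_{\param'}\!\bigl(P_\param^n f - P_{\param'}^n f\bigr),
\]
where the constant $\pi_\param(f)$ is inserted for free since $(\pi_\param - \pi_{\param'})(c) = 0$.

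For the first term, \Cref{hyp:noyau}-\ref{hyp:noyau:ergo:geom} gives $\bigl|P_\param^n f(x) - \pi_\param(f)\bigr| \leq C \cgeom^n W(x)$, so the function $g_n \eqdef P_\param^n f - \pi_\param(f)$ satisfies $|g_n|_W \leq C \cgeom^n$; hence
\[
\bigl| (\pi_\param - \pi_{\param'})(g_n) \bigr| \leq C \cgeom^n \, \|\pi_\param - \pi_{\param'}\|_W.
\]
For the second term, \Cref{lemme:ecart:puissance} yields a constant $C'$ depending only on $\calK$ such that $\bigl|P_\param^n f(x) - P_{\param'}^n f(x)\bigr| \leq C' D_W(\param,\param') W(x)$, whence, by \Cref{hyp:noyau}-\ref{hyp:noyau:ergo:geom},
\[
\bigl|\pi_{\param'}(P_\param^n f - P_{\param'}^n f)\bigr| \leq C' D_W(\param,\param') \, \pi_{\param'}(W) \leq C'' D_W(\param,\param').
\]
Combining, taking the supremum over $\{f:|f|_W \leq 1\}$, and then choosing $n$ large enough (depending only on $\calK$) so that $C \cgeom^n \leq 1/2$, I obtain
\[
\|\pi_\param - \pi_{\param'}\|_W \leq \tfrac{1}{2}\|\pi_\param - \pi_{\param'}\|_W + C'' D_W(\param,\param'),
\]
and the finiteness of $\|\pi_\param - \pi_{\param'}\|_W$ established at the outset lets us absorb the first term, yielding the claim with constant $2 C''$. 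The only subtle point is the a priori finiteness of $\|\pi_\param - \pi_{\param'}\|_W$, which is handled by the moment bound $\sup_{\param\in\calK}\pi_\param(W) < \infty$; everything else is bookkeeping between \Cref{hyp:noyau}-\ref{hyp:noyau:ergo:geom} and \Cref{lemme:ecart:puissance}.
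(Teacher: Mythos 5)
Your proof is correct, but it takes a genuinely different route from the paper. The paper's argument is more direct: for an arbitrary fixed $x \in \X$ it applies the triangle inequality
\[
\| \pi_{\param} - \pi_{\param'}\|_W \leq \left\| \pi_{\param} - P_{\param}^{n}(x,\cdot) \right\|_W + \left\| P_{\param}^{n}(x,\cdot) - P_{\param'}^{n}(x,\cdot) \right\|_W + \left\| P_{\param'}^{n}(x,\cdot) -\pi_{\param'}\right\|_W \eqsp,
\]
bounds the two outer terms by $C \cgeom^{n} W(x)$ via \Cref{hyp:noyau}-\ref{hyp:noyau:ergo:geom}, bounds the middle term by $C' D_W(\param,\param') W(x)$ uniformly in $n$ via \Cref{lemme:ecart:puissance}, and then lets $n \to \infty$ with $x$ fixed, so the geometric terms simply vanish. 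You instead anchor on invariance, $\pi_\param P_\param^n = \pi_\param$, split off the constant $\pi_\param(f)$, and close with a contraction--absorption step in which the unknown norm reappears on the right-hand side with coefficient $C\cgeom^n \leq 1/2$. Both arguments rest on exactly the same two ingredients (geometric ergodicity and \Cref{lemme:ecart:puissance}), but the paper's version is shorter precisely because the quantity being estimated never appears on the right, so no a priori finiteness is needed; your version must establish $\|\pi_\param - \pi_{\param'}\|_W < \infty$ before absorbing, which you correctly identified as the delicate point and handled via $\sup_{\param \in \calK} \pi_\param(W) < \infty$. A small payoff of your route is that the final constant involves $\sup_{\param \in \calK}\pi_\param(W)$ rather than $W(x_0)$ at an arbitrarily chosen anchor point $x_0$; this is cosmetic, since both quantities are finite and admissible as constants depending only on $\calK$.
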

\begin{proof}
  For any $x \in \X, n \in \Nset$,
\begin{align*}
  \| \pi_{\param} - \pi_{\param'}\|_W & \leq \left\| \pi_{\param} -
    \P{}^{n}(x,\cdot) \right\|_W + \left\| \P{}^{n}(x,\cdot) -
    P_{\param'}^{n}(x,\cdot) \right\|_W + \left\| P_{\param'}^{n}(x,\cdot)
    -\pi_{\param'}\right\|_W \eqsp.
\end{align*}
Let $\calK$ be a compact subset of $\thetaset$. By
\Cref{hyp:noyau}-\ref{hyp:noyau:ergo:geom}, there exist constants $C >0$
and $\cgeom \in \ooint{0,1}$ such that for any $n \in \Nset$ and $x \in \X$
$  \sup_{\param \in \calK} \left\| \pi_{\param} - \P{}^{n}(x,\cdot)
  \right\|_W \leq C \cgeom^{n} W(x)$.
Moreover, using \Cref{lemme:ecart:puissance}, there exists a constant $C'
>0$ such that for any $\param, \param' \in \calK$ and any $x \in \X$,
$  \sup_{n \geq 0} \left\| \P{}^{n}(x,\cdot) - P_{\param'}^{n}(x,\cdot) \right\|_W \leq C' D_W(\param,\param') W(x)$.
The proof follows, upon noting that $x$ is fixed and arbitrarily chosen.
\end{proof}

 \begin{lemma} \label{lemme:ecart:h}
   Assume \Cref{hyp:PhiandH},
   \Cref{hyp:noyau}-\ref{hyp:noyau:loi_inv},\ref{hyp:noyau:ergo:geom}
   and \Cref{hyp:H:deltaH}. For any compact set $\calK \subset \thetaset$,
   there exists $C>0$  such that for any $\param,
   \param' \in \calK$,
 \begin{align*}
   \norm{h(\param) - h(\param')} \leq C \left( D_W(\param, \param') + \norm{ \param -
     \param'}^{\alpha} \right) \eqsp,
 \end{align*}
where $D_W$ and $\alpha$ are given by \eqref{def:D} and \Cref{hyp:H:deltaH}.
\end{lemma}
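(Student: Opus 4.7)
The plan is to split $h(\param)-h(\param')$ into a ``fixed measure, varying field'' piece and a ``fixed field, varying measure'' piece, so that each can be controlled by exactly one of the available hypotheses. Concretely, write
\begin{equation*}
h(\param) - h(\param') = \pi_\param(\H{\param}{} - \H{\param'}{}) + (\pi_\param - \pi_{\param'})(\H{\param'}{}) \eqsp.
\end{equation*}

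For the first term, I would apply the triangle inequality under the integral and then bound the pointwise integrand by the supremum appearing in \Cref{hyp:H:deltaH}: since $\param'\in \calK$ and $\norm{\param-\param'}\leq \delta$ with the choice $\delta = \norm{\param-\param'}$, we get
\begin{equation*}
\norm{\pi_\param(\H{\param}{} - \H{\param'}{})} \leq \int \pi_\param(\rmd x) \sup_{\{\param''\in\calK,\, \norm{\param''-\param}\leq \delta\}} \norm{\H{\param''}{x} - \H{\param}{x}} \leq C \norm{\param-\param'}^{\alpha}
\end{equation*}
by \Cref{hyp:H:deltaH}. This produces the $\norm{\param-\param'}^{\alpha}$ term.

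For the second term, I would bound component-wise using the dual $W$-norm: for each coordinate $i$,
\begin{equation*}
\left|(\pi_\param - \pi_{\param'})(\H{\param'}{})_i\right| \leq \norm{\pi_\param - \pi_{\param'}}_W \, |\H{\param'}{}_i|_W \eqsp.
\end{equation*}
\Cref{hyp:PhiandH} provides $\sup_{\param\in\calK}|\H{\param}{}|_W < \infty$, while \Cref{lemme:ecart:pi} gives $\norm{\pi_\param - \pi_{\param'}}_W \leq C' D_W(\param,\param')$. This produces the $D_W(\param,\param')$ term. Summing the two bounds and absorbing constants completes the proof. There is no real obstacle here once the right splitting is chosen: the key point is to integrate against $\pi_\param$ (not $\pi_{\param'}$) in the first piece, so that \Cref{hyp:H:deltaH} applies directly without an auxiliary swap of measures.
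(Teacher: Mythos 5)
Your proposal is correct and follows essentially the same route as the paper: the identical splitting $h(\param)-h(\param') = \pi_\param(\H{\param}{}-\H{\param'}{}) + (\pi_\param-\pi_{\param'})\H{\param'}{}$, with \Cref{hyp:H:deltaH} (taking $\delta = \norm{\param-\param'}$) controlling the first piece and \Cref{lemme:ecart:pi} together with \Cref{hyp:PhiandH} controlling the second. Your coordinate-wise handling of the dual $W$-norm bound is just a more explicit rendering of the paper's one-line estimate $\norm{(\pi_\param-\pi_{\param'})\H{\param'}{}} \leq C\, D_W(\param,\param')$.
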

\begin{proof}
  Let $\calK$ be a compact subset of $\thetaset$ and $\param$, $\param'$ in
  $\calK$.  By definition of $h$, it holds
\begin{align*}
  \norm{h(\param)-h(\param') } &= \norm{ \pi_{\param}\H{\param}{}
  -\pi_{\param'}\H{\param'}{} } \leq \pi_{\param}\norm{\H{\param}{}-\H{\param'}{}} +
  \norm{(\pi_{\param} -\pi_{\param'})\H{\param'}{}} \eqsp.
\end{align*}
Condition \Cref{hyp:H:deltaH} implies that there exists a constant $C>0$
such that for any $\param, \param' \in \calK$,
$\pi_{\param}\norm{\H{\param}{}-\H{\param'}{} } \leq C \norm{ \param - \param'
}^{\alpha}$.  By \Cref{lemme:ecart:pi} and \Cref{hyp:PhiandH}, there
exist $C>0$ such that for any $\param, \param' \in \calK$, $ \norm{(\pi_{\param}
-\pi_{\param'})\H{\param'}{}} \leq C D_W(\param,\param')$.  The proof follows.
\end{proof}

For any $\vartheta \in \thetaset$, $x \in \X$ and $L \geq 0$, set
\begin{equation}\label{eq:definition:calH}
\calH_{\vartheta,L}(x) \eqdef \sup_{\{\theta \in \calK: \|\theta- \vartheta
  \| \leq L \}} \norm{ \H{\param}{x} - \H{\vartheta}{x} } \eqsp.
\end{equation}
\begin{proposition}
\label{prop:ContinuitePoisson}
Assume \Cref{hyp:PhiandH},
\Cref{hyp:noyau}-\ref{hyp:noyau:loi_inv},\ref{hyp:noyau:ergo:geom}, and
\Cref{hyp:H:deltaH}.  Let $g_\param$ be the solution of
(\ref{eq:poisson:equation}). For any compact set $\calK \subset \thetaset$,
there exist constants $C>0$ and $\cgeom\in \ooint{0,1}$ such that for any
$\param, \param' \in \calK$, $x \in \X$, $n \geq 1$, $L>0$, and any $\vartheta
\in \calK$ such that $\norm{\param-\vartheta } \leq L$,
  \begin{multline*}
    \norm{ P_\param g_\param (x) - P_{\param'} g_{\param'}(x) } \leq C
    \Big\{ \lambda^n W(x) + 2 \, \sum_{l=1}^n P_\vartheta^l
    \calH_{\vartheta,L}(x)    \\
    + n \, \norm{\param - \param'}^\alpha + n \, D_W(\param, \param') + n \,
    D_W(\param, \vartheta) W(x) + n \, D_W(\param', \vartheta) W(x) \Big\}
    \eqsp.
  \end{multline*}
\end{proposition}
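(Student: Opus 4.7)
The plan is to exploit the explicit series representation of the Poisson solution and to isolate the $\param$-dependence of the kernels from that of the field by introducing the pivot pair $(P_\vartheta, H_\vartheta)$. Since $g_\param = \sum_{k \geq 0}(P_\param^k H_\param - h(\param))$, we have $P_\param g_\param(x) = \sum_{l \geq 1}(P_\param^l H_\param(x) - h(\param))$, and likewise for $\param'$. I would split the difference into a tail $l > n$ and a head $1 \le l \le n$. The tail is controlled by \Cref{hyp:noyau}-\ref{hyp:noyau:ergo:geom}: for each kernel one has $|P_\param^l H_\param(x) - h(\param)| \leq \|P_\param^l(x,\cdot)-\pi_\param\|_W \, |H_\param|_W \leq C\lambda^l W(x)$, which after summing from $l=n+1$ produces the $\lambda^n W(x)$ contribution (using \Cref{hyp:PhiandH} to uniformly bound $|H_\param|_W$).

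For the head I would write
\[
\sum_{l=1}^n \bigl[P_\param^l H_\param(x) - P_{\param'}^l H_{\param'}(x)\bigr] - n\bigl[h(\param)-h(\param')\bigr].
\]
The second piece is bounded by $nC(D_W(\param,\param') + \|\param-\param'\|^\alpha)$ directly from \Cref{lemme:ecart:h}, producing the $n\|\param-\param'\|^\alpha$ and $nD_W(\param,\param')$ terms. For the first piece, I would insert the pivot and telescope:
\[
P_\param^l H_\param - P_{\param'}^l H_{\param'} = (P_\param^l - P_\vartheta^l)H_\param + P_\vartheta^l(H_\param - H_\vartheta) + P_\vartheta^l(H_\vartheta - H_{\param'}) + (P_\vartheta^l - P_{\param'}^l)H_{\param'}.
\]
The two kernel-difference terms, applied to functions of $W$-norm bounded uniformly on $\calK$, are controlled by \Cref{lemme:ecart:puissance}, giving $CD_W(\param,\vartheta)W(x)$ and $CD_W(\param',\vartheta)W(x)$, whose sum over $l$ yields the $nD_W(\param,\vartheta)W(x)$ and $nD_W(\param',\vartheta)W(x)$ contributions. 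The field-difference terms are dominated pointwise by $\calH_{\vartheta,L}$: the first directly since $\|\param-\vartheta\|\le L$, yielding $P_\vartheta^l \calH_{\vartheta,L}(x)$ after applying $P_\vartheta^l$.

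The main obstacle is the term $P_\vartheta^l(H_\vartheta - H_{\param'})(x)$, since the statement does not assume $\|\param'-\vartheta\| \leq L$. I would handle it by the triangle inequality
\[
|H_\vartheta - H_{\param'}|(y) \leq |H_\vartheta - H_\param|(y) + |H_\param - H_{\param'}|(y),
\]
then apply $P_\vartheta^l$ on both sides; the first summand contributes a second copy of $P_\vartheta^l \calH_{\vartheta,L}(x)$ (explaining the factor $2$ in front of $\sum_{l=1}^n P_\vartheta^l \calH_{\vartheta,L}(x)$), while the second is rewritten as $P_\vartheta^l H_\param - P_\vartheta^l H_{\param'}$ and telescoped back against the already-accounted kernel differences, keeping only $D_W$ contributions that are subsumed by the bound. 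Collecting the estimates over the $n$ head terms and the tail yields the stated inequality, up to a multiplicative constant depending only on the compact set $\calK$.
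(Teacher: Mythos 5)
Your decomposition is exactly the paper's: the same Poisson-series representation with the tail/head split at $n$ (tail controlled by \Cref{hyp:PhiandH} and \Cref{hyp:noyau}-\ref{hyp:noyau:ergo:geom}), the mean-field term $n(h(\param')-h(\param))$ controlled by \Cref{lemme:ecart:h}, the kernel differences through the pivot $\vartheta$ controlled by \Cref{lemme:ecart:puissance}, and the field difference $P_\vartheta^l(\H{\param}{}-\H{\vartheta}{})$ dominated by $P_\vartheta^l\calH_{\vartheta,L}$. All of those steps are correct. The gap is precisely at the point you call the main obstacle. After your triangle inequality you are left with $P_\vartheta^l\norm{\H{\param}{}-\H{\param'}{}}(x)$, and the proposed move --- ``telescoped back against the already-accounted kernel differences, keeping only $D_W$ contributions'' --- is not a valid manipulation: this term is a single fixed kernel applied to a difference of \emph{fields}, so no kernel difference, hence no $D_W$ quantity, can be extracted from it; and no assumption of the paper controls it, since \Cref{hyp:H:deltaH} bounds field differences only after integration under $\pi_\param$ and only within a $\delta$-ball of the parameter, not pointwise and not under $P_\vartheta^l(x,\cdot)$.

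In fact no repair is possible, because the inequality as literally stated (assuming only $\norm{\param-\vartheta}\leq L$) is false. Take $P_\param=P$ for all $\param$ (so every $D_W$ vanishes and \Cref{hyp:Wfluctuation} is trivial), with $P$ the deterministic walk on $\Nset$ moving $x\mapsto x-1$ and absorbed at $0$, $W(x)=e^x$, and $\H{\param}{x}=\param\, W(x)$ with $\calK=\ccint{0,1}$; then \Cref{hyp:PhiandH}, \Cref{hyp:noyau} and \Cref{hyp:H:deltaH} hold with $\alpha=1$, and $\calH_{\vartheta,L}=L\,W$. Choosing $\param=\vartheta=0$ and $\param'=1$, the left-hand side equals $\sum_{l\geq 1}(P^lW(x)-1)\geq c\,W(x)$ for large $x$, while the right-hand side is $C\{\lambda^n W(x)+2L\sum_{l=1}^n P^lW(x)+n\}$; taking $n$ large, then $L$ small, then $x$ large violates the bound. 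The correct resolution is the one the paper itself (implicitly) uses: the statement is meant with the additional hypothesis $\norm{\param'-\vartheta}\leq L$, under which the two halves $P_\param^l\H{\param}{}-P_\vartheta^l\H{\vartheta}{}$ and $P_\vartheta^l\H{\vartheta}{}-P_{\param'}^l\H{\param'}{}$ are treated symmetrically, each contributing one copy of $P_\vartheta^l\calH_{\vartheta,L}(x)$ --- this is where the factor $2$ actually comes from. That extra hypothesis is harmless since it is verified in the only application, the proof of \Cref{prop:terme:continuite}, where $\norm{\param_j-\param_{j-\psi_j}}\vee\norm{\param_{j-1}-\param_{j-\psi_j}}\leq L_j$ on the event $\calA_r(k,j)$. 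With it, your problematic term is bounded directly and your argument coincides with the paper's.
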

\begin{proof}
  For any $\param, \param' \in \thetaset$ and any $n \geq 1$, we write
\begin{align*}
  P_\param g_\param (x) - P_{\param'} g_{\param'}(x) &= \sum_{l > n} \{
  P_\param^l \H{\param}{x} - h(\param) \} - \sum_{l > n} \{ P_{\param'}^l
  \H{\param'}{x} - h(\param') \} + \psi \left( h(\param') - h(\param) \right) \\
  &+ \sum_{l=1}^n \{ P_\param^l \H{\param}{x} - P_{\param'}^l \H{\param'}{x}
  \} \eqsp.
\end{align*}
We first prove that
\begin{multline}
  \norm{ P_\param g_\param (x) - P_{\param'} g_{\param'}(x) - \sum_{l=1}^n
    \{ P_\param^l \H{\param}{x} - P_{\param'}^l \H{\param'}{x} \} } \\
    \leq C \left( \lambda^n W(x) + n \, D_W(\param, \param') + n \, \norm{\param -
      \param'}^\alpha \right) \eqsp. \label{eq:ContinuitePoisson:step1}
\end{multline}
By \Cref{hyp:PhiandH} and \Cref{hyp:noyau}-\ref{hyp:noyau:ergo:geom},
for any compact set $\cal K$, there exist constants $C>0$ and $\lambda \in
\ooint{0,1}$ such that for any $x,n$
\[
\sup_{ \param \in \calK} \norm{ \sum_{l > n} \{ P_\param^l \H{\param}{x} -
  h(\param) \} } \leq C \lambda^{n+1} \sup_{\param \in \calK}
|\H{\param}{}|_W \, W(x) \eqsp.
\]
(\ref{eq:ContinuitePoisson:step1}) follows from this inequality and
\Cref{lemme:ecart:h}.  We now establish an upper bound for $\norm{
  \sum_{l=1}^n \{ P_\param^l \H{\param}{x} - P_{\param'}^l \H{\param'}{x} \}
}$; we first write
\[
\sum_{l=1}^n \norm{ P_\param^l \H{\param}{x} - P_{\param'}^l \H{\param'}{x} }
\leq \sum_{l=1}^n \norm{ P_\param^l \H{\param}{x} - P_{\vartheta}^l
  \H{\vartheta}{x} } + \sum_{l=1}^n \norm{ P_\vartheta^l \H{\vartheta}{x} -
  P_{\param'}^l \H{\param'}{x} } \eqsp.
\]
For any $l \geq
1$ and $\vartheta \in \calK$ such that $\norm{\theta- \vartheta } \leq L$, we have
\begin{align*}
  \norm{ P_\param^l \H{\param}{x} - P_\vartheta^l \H{\vartheta}{x}}& \leq
  P_\vartheta^l \norm{ \H{\param}{}-\H{\vartheta}{}}(x) + \norm{\left(
      P_\param^l - P_\vartheta^l \right) \H{\param}{x} } \\
  & \leq P_\vartheta^l \calH_{\vartheta,L}(x) + \norm{\left( P_\param^l -
      P_\vartheta^l \right) \H{\param}{x} } \eqsp.
\end{align*}
By \Cref{lemme:ecart:puissance}, the second term is upper bounded by $C
D_W(\param, \vartheta) W(x)$ for a constant $C$ depending upon $\calK$ (and
independent of $L$ and $l$).  This concludes the proof.
\end{proof}

\subsection{Proof of \Cref{theorem:general}}
\label{sec:proof:theorem:general}
Define the shifted sequence
\begin{equation}
  \label{eq:TranslatedGamma}
   {\bs \gamma^{\leftarrow q}} = \{\gamma_{q+n}, n \in \Nset \} \eqsp;
\end{equation}
and for any measurable set $\cal K$ of $\thetaset$, define the exit-time from
$\calK$
 \begin{align}
\label{eq:def:calK}
\sigma(\calK) = \inf\{n \geq 1, \param_n \notin \calK\} \eqsp,
\end{align}
with the convention that $\inf \emptyset = +\infty$. If $I_N=I_{N-1}+1 =i$ i.e.
the $i$-th update of the active set occurs at iteration $N$ then
\begin{align}\label{eq:Iteration:init}
  X_{N+1} \sim P_{\param_\star}(x_\star, \cdot)\eqsp, \qquad \param_{N+1} =
  \theta_\star + \gamma^{\leftarrow i}_{1}
  \H{\theta_\star}{X_{N+1}} \eqsp,
\end{align}
and for any $\zeta \geq 1$, while $\param_{N+\zeta} \in \calK_{I_N}$,
\begin{align*}
  X_{N+\zeta+1} \sim P_{\param_{N+\zeta}}(X_{N+\zeta}, \cdot) \qquad
  \param_{N+\zeta+1} = \param_{N+\zeta} + \gamma^{\leftarrow i}_{\zeta+1}
  \H{\param_{N+\zeta}}{X_{N+\zeta+1}} \eqsp.
\end{align*}
This iterative scheme can be seen as a perturbation of the algorithm
$\tau_{N+\zeta+1} = \tau_{N+\zeta} + \gamma^{\leftarrow i}_{\zeta+1}
h(\tau_{N+\zeta})$ and we will show that the sequence $\{\param_n,n \geq 0 \}$
converges as soon as the perturbations $\{\H{\param_k}{X_{k+1}} - h(\param_k),
k \in \Nset \}$ are small enough in some sense. We therefore preface the proof
of \Cref{theorem:general} by preliminary results on the control of
\begin{align*}
  S_{k,l}(\bs \rho, \calK) \eqdef \indic{l \leq \sigma(\cal K)} \sum_{j=k}^{l}
  \rho_j \A_{\param_{j-1}} \left\{ \H{\param_{j-1}}{X_j} - h(\param_{j-1})\right\} \eqsp,
\end{align*}
for $l \geq k \geq 1$, a stepsize sequence $\bs \rho = \suite{\rho}$, a compact
subset $\calK$ of $\thetaset$, and $\sigma(\calK)$ defined by
(\ref{eq:def:calK}). Let $\param \mapsto \A_\param$, $\param \in \Theta$, be a measurable
$d' \times d$ matrix function, where $d' \geq 1$; we will apply the result to
$\A_\theta= \mathrm{I}_{d \times d}$ where $\mathrm{I}_{d \times d}$ is the $d \times d$ identity matrix and $\A_{\param}= \nabla w(\param)'$.

For a sequence $\bs \rho= \suite{\rho}$, denote by ${\PP}_{x,\param}^{\bs
  \rho}$ (resp.  ${\PE}_{x,\param}^{\bs \rho}$) the probability (resp. the
expectation) associated with the non-homogeneous Markov chain on $\X \times
\thetaset$ with $\delta_{(x,\param)}$ as initial distribution and with
transition mechanism given by line \ref{line:algo1:updateX} and line
\ref{line:algo1:updateT} of \Cref{algo:SA:basique}:
\[
X_{n+1} \sim P_{\param_n}(X_n, \cdot) \qquad \param_{n+1} = \param_n +
\rho_{n+1} \H{\param_n}{X_{n+1}} \eqsp.
\]

\begin{proposition}\label{prop:controle:Sln}
  Assume \Cref{hyp:PhiandH}, \Cref{hyp:noyau}, \Cref{hyp:Wfluctuation} and
  \Cref{hyp:H:deltaH}.  Let $\bs \rho$ be a non-increasing positive sequence,
  $\bs \psi = \{\psi_n, n \in \Nset \}$ be a sequence such that $1 \leq \psi_n
  \leq n$ and $\calK$ be a compact set of $\thetaset$.  Let $\param \mapsto
  \A_\param$, $\param \in \Theta$ be a $(d' \times d)$-matrix valued function
  such that $\sup_{\theta \in \calK} \norm{\A_\theta} < \infty$ and
  $\sup_{\theta, \theta' \in \calK} \norm{\theta - \theta'}^{-1} \ 
  \norm{\A_\theta - \A_{\theta'}} \leq C_\A $. Then, there exists a constant
  $C$ such that for any $\delta>0$, $r \in \ocint{0,1}$ and any $(x,\param) \in
  \X \times \calK$,
  \begin{multline*}
    \PP_{x,\param}^{\bs \rho} \left( \sup_{l \geq k} \norm{ S_{k,l}\left( \bs
          \rho, \calK \right) } \geq \delta \right)  \leq \delta^{-1} C \sum_{j \geq k} \rho_{j-\psi_j}^{1+r\alpha} \psi_j^{1+\alpha} + \delta^{-1} C  W^p(x) \sum_{j \geq k} \rho_j^{p(1-r)} \\
    + \delta^{-1} W(x) \left\{ \rho_k + \sum_{j\geq k} \left( \rho_j
        \lambda^{\psi_j} + \rho_{j-\psi_j}^{1+r \pa} \psi_j^3 \right) \right\} + C_\A \, C \delta^{-1} W^2(x) \sum_{j \geq k} \rho_j^2 \eqsp.
  \end{multline*}
\end{proposition}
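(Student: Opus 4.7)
\textbf{Plan for the proof of \Cref{prop:controle:Sln}.} The strategy is the Poisson-equation decomposition of Benveniste--M\'etivier--Priouret, adapted to the case where $\theta\mapsto\H{\theta}{x}$ is only integrally H\"older (\Cref{hyp:H:deltaH}), by introducing a delay $\psi_j$ that ``averages out'' the discontinuity. Recall from \Cref{sec:poisson} the solution $g_\param$ of \eqref{eq:poisson:equation}, which satisfies the uniform bound \eqref{eq:gtheta:controlK}. Writing $\H{\param_{j-1}}{X_j}-h(\param_{j-1})=g_{\param_{j-1}}(X_j)-P_{\param_{j-1}}g_{\param_{j-1}}(X_j)$ and inserting $\pm P_{\param_{j-1}}g_{\param_{j-1}}(X_{j-1})$, the summand in $S_{k,l}(\bs\rho,\calK)$ splits as a martingale increment
\[
\Delta M_j \eqdef \rho_j\A_{\param_{j-1}}\bigl[g_{\param_{j-1}}(X_j)-P_{\param_{j-1}}g_{\param_{j-1}}(X_{j-1})\bigr]
\]
with respect to $\mcf_j$, plus a ``remainder'' piece $\rho_j\A_{\param_{j-1}}[P_{\param_{j-1}}g_{\param_{j-1}}(X_{j-1})-P_{\param_{j-1}}g_{\param_{j-1}}(X_j)]$. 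An Abel summation on the latter turns it into a telescoping $\rho_k\A_{\param_{k-1}}P_{\param_{k-1}}g_{\param_{k-1}}(X_{k-1})$ boundary term (bounded by $\rho_k W(x)$ via \eqref{eq:gtheta:controlK}, contributing the $\rho_k$ term in the target estimate), step-size differences $(\rho_j-\rho_{j-1})$ absorbed in $\rho_k W(x)$ since $\bs\rho$ is non-increasing, and ``parameter-increment'' terms $A_{\param_{j-1}}P_{\param_{j-1}}g_{\param_{j-1}}(X_j)-A_{\param_j}P_{\param_j}g_{\param_j}(X_j)$.

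For the martingale part $M_{k,l}=\sum_{j=k}^l \Delta M_j$, I would use Doob's maximal inequality together with the moment control implied by the drift \Cref{hyp:noyau}-\ref{hyp:noyau:control:W:traj}, which gives $\sup_j {\PE}_{x,\param}^{\bs\rho}[W^p(X_j)]\leq C W^p(x)$. Since $|\Delta M_j|\leq C\rho_j W(X_j)$ and one wants a $\delta^{-1}$ (not $\delta^{-p}$) Markov tail, I interpolate: bound $|\Delta M_j|\leq C\rho_j W(X_j)^{1-r}\cdot W(X_j)^r$ and split the martingale into a ``small-variance'' part estimated in $L^p$ via Burkholder/Rosenthal, producing the $W^p(x)\sum_{j\geq k}\rho_j^{p(1-r)}$ contribution, and a ``discontinuity'' part absorbed in the H\"older-type terms below.

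The heart of the proof---and the place where the non-continuity of $\H{\cdot}{x}$ is genuinely new---is controlling the parameter-increment remainder. Here for each $j$ I choose the ``past'' reference $\vartheta_j\eqdef\param_{j-1-\psi_j}$ and apply \Cref{prop:ContinuitePoisson} twice to bound $P_{\param_{j-1}}g_{\param_{j-1}}-P_{\vartheta_j}g_{\vartheta_j}$ and $P_{\param_j}g_{\param_j}-P_{\vartheta_j}g_{\vartheta_j}$ with displacement $L\leq\sum_{i=j-\psi_j}^{j}\rho_i \|H_{\param_{i-1}}(X_i)\|$, which is of order $\psi_j\rho_{j-\psi_j}\sup_\calK|H|_W\, W(X_{j-\psi_j-1})$ on $\{l\leq\sigma(\calK)\}$. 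The four terms delivered by \Cref{prop:ContinuitePoisson} are then processed as follows: (i) $\lambda^{\psi_j}W(X_j)$ contributes $\rho_j\lambda^{\psi_j}W(x)$ after taking expectations; (ii) $\sum_{l=1}^{\psi_j}P_{\vartheta_j}^l\calH_{\vartheta_j,L}(X_j)$ is the delicate one---conditional on $\mcf_{j-1-\psi_j}$, $\vartheta_j$ is measurable and $P_{\vartheta_j}^l(X_{j-\psi_j-1},\cdot)$ mixes geometrically to $\pi_{\vartheta_j}$, so $\PE[\calH_{\vartheta_j,L}(X_j)\mid\mcf_{j-1-\psi_j}]\lesssim \pi_{\vartheta_j}(\calH_{\vartheta_j,L})+\lambda^{\psi_j}W(\cdot)$, and \Cref{hyp:H:deltaH} gives $\pi_{\vartheta_j}(\calH_{\vartheta_j,L})\lesssim L^\alpha$, which after combining with the bound on $L$ and the interpolation parameter $r$ yields the $\rho_{j-\psi_j}^{1+r\alpha}\psi_j^{1+\alpha}$ contribution; (iii) the $\|\param-\param'\|^\alpha$, $D_W(\param,\param')$, $D_W(\param,\vartheta)$ and $D_W(\param',\vartheta)$ terms are controlled via \Cref{hyp:Wfluctuation} by powers of the same increment $\psi_j\rho_{j-\psi_j}$, with the extra multiplicative factor of $\psi_j$ in \Cref{prop:ContinuitePoisson} producing the $\psi_j^3$ factor in the $\rho_{j-\psi_j}^{1+r\pa}\psi_j^3$ term.

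Finally, the Lipschitz constant $C_\A$ of $\A_\theta$ contributes an extra piece $\|\A_{\param_j}-\A_{\param_{j-1}}\|\cdot|P_{\param_j}g_{\param_j}(X_j)|\leq C_\A C \rho_j W(X_{j-1})W(X_j)$ per index, whose sum yields the $C_\A W^2(x)\sum\rho_j^2$ term after using \Cref{hyp:noyau}-\ref{hyp:noyau:control:W:traj} with $p\geq 2$ (or Cauchy--Schwarz with $p>1$). Collecting all the bounds on $\{l\leq\sigma(\calK)\}$---where every constant from \Cref{prop:ContinuitePoisson}, \Cref{lemme:ecart:h} and \Cref{hyp:noyau} is uniform in $\param_j$---and applying Markov's inequality to $\PE_{x,\param}^{\bs\rho}[\sup_{l\geq k}\|S_{k,l}(\bs\rho,\calK)\|]$ gives the stated estimate. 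The principal obstacle is step (ii) above: turning the \emph{pointwise} non-smoothness of $\H{\cdot}{x}$ into a usable \emph{averaged} bound requires exactly the combination of the delay $\psi_j$ (giving enough mixing time under $P_{\vartheta_j}$ to invoke $\pi_{\vartheta_j}$), the $\mcf_{j-1-\psi_j}$-measurability of $\vartheta_j$, and the interpolation exponent $r\in\ocint{0,1}$; calibrating these so that the exponents $1+r\alpha$, $1+r\pa$, $\psi_j^{1+\alpha}$ and $\psi_j^3$ emerge exactly is the most subtle bookkeeping step.
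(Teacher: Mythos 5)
Your overall architecture coincides with the paper's: Poisson-equation decomposition of $S_{k,l}$ into a martingale, a parameter-increment (smoothness) term, step-size-difference and boundary terms, with the smoothness term controlled by \Cref{prop:ContinuitePoisson} applied at a delayed reference $\vartheta=\param_{j-\psi_j}$, geometric mixing over the window $\psi_j$, and \Cref{hyp:H:deltaH} evaluated at $\pi_{\vartheta}$. However, there is one genuine missing idea, and it is not mere bookkeeping: the paper's proof hinges on the deterministic truncation event $\calA_r(k,\sigma)=\bigcap_{\ell}\{\norm{\param_\ell-\param_{\ell-1}}\leq \rho_\ell^r\}$, and nothing in your outline plays its role. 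The paper first splits $\PP_{x,\param}^{\bs\rho}\left(\sup_l\norm{S_{k,l}}\geq\delta\right)\leq \delta^{-1}\PE_{x,\param}^{\bs\rho}\left[\,\cdots\indic{\calA_r(k,\sigma)}\,\right]+\PP_{x,\param}^{\bs\rho}\left(\calA_r(k,\sigma)^c\right)$; since $\norm{\param_\ell-\param_{\ell-1}}\leq C\rho_\ell W(X_\ell)$ on $\{\ell\leq\sigma\}$, Markov's inequality at power $p$ together with the drift \Cref{hyp:noyau}-\ref{hyp:noyau:control:W:traj} gives $\PP_{x,\param}^{\bs\rho}\left(\calA_r(k,\sigma)^c\right)\leq C\, W^p(x)\sum_{\ell\geq k-\psi_k}\rho_\ell^{p(1-r)}$. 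This, not the martingale, is the origin of the term $\delta^{-1}C W^p(x)\sum_j\rho_j^{p(1-r)}$ in the statement. The martingale term needs no interpolation at all: it is bounded in $L^p$ by Burkholder's maximal inequality, yielding $C W^p(x)\sum_\ell\rho_\ell^p$. Your proposed splitting $\norm{\Delta M_j}\leq C\rho_j W(X_j)^{1-r}W(X_j)^r$ cannot produce the exponent $\rho_j^{p(1-r)}$ (the $r$ must sit on $\rho$, not on $W$), so that step, as written, is not a proof.

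The absence of $\calA_r$ then breaks the step you yourself identify as the heart of the argument. You bound the displacement over the window by $L\leq\psi_j\,\rho_{j-\psi_j}\sup_{\param\in\calK}|\H{\param}{}|_W\, W(X_{j-\psi_j-1})$. This is false pathwise, since the increments involve $W(X_i)$ for $j-\psi_j\leq i\leq j$, which are unbounded and cannot be dominated by $W(X_{j-\psi_j-1})$; worse, any correct bound of this type is \emph{random}. The backward-induction/conditioning argument requires $\calH_{\vartheta_j,L_j}$ to be a fixed function once $\mcf_{j-\psi_j}$ is given: one peels off the kernels one at a time, replacing $P_{\param_i}$ by $P_{\param_{j-\psi_j}}$ at the cost of $D_W$ errors controlled through \Cref{hyp:Wfluctuation}, and only then invokes \Cref{hyp:noyau}-\ref{hyp:noyau:ergo:geom} and the bound $\pi_{\vartheta_j}\left(\calH_{\vartheta_j,L_j}\right)\leq C L_j^\alpha$ from \Cref{hyp:H:deltaH}. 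If $L$ depends on $X_{j-\psi_j},\dots,X_j$, neither the interchange of conditional expectations nor the application of \Cref{hyp:H:deltaH} goes through; note also that your $L$ carries $\rho_{j-\psi_j}^{1}$ rather than $\rho_{j-\psi_j+1}^{r}$, so it would produce an exponent $1+\alpha$ instead of the $1+r\alpha$ claimed in the statement, a mismatch that signals the displacement bound is not the right one. On $\calA_r(k,\sigma)$, by contrast, the displacement is the deterministic $L_j=\psi_j\rho_{j-\psi_j+1}^r$, which simultaneously yields the exponents $1+r\alpha$ and $1+r\pa$ and keeps the conditioning legitimate. The repair is structural but local: introduce $\calA_r$, restrict the parameter-increment term to it, pay for its complement with the $W^p(x)\sum_j\rho_j^{p(1-r)}$ term, and treat the martingale by Burkholder with no $r$; with that modification your outline becomes the paper's proof.
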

\begin{proof}
  The proof is postponed to \Cref{sec:proof:prop:controle:Skl}.
\end{proof}

\begin{corollary}
  \label{coro:controle:Sln}  Assume \Cref{hyp:PhiandH} to \Cref{hyp:sommes:pas:poly}.   Let $\calK$ be a compact set of  $\thetaset$ and $(x_\star, \param_\star) \in \X \times \calK$. For any $\delta>0$ and any $\varepsilon \in
  \ooint{0,1}$, there exists $i_\star$ such that for any $i \geq i_\star$
\[
\PP_{x_\star,\param_\star}^{\bs \gamma^{\leftarrow i}}\left( \sup_{n \geq 1} \norm{
    S_{1,n}\left(\bs \gamma^{\leftarrow i}, \calK\right)} \geq \delta
\right) \leq \varepsilon \eqsp.
\]
For any $\delta>0$ and any $i \geq 0$,
\[
\lim_{k \to \infty} \PP_{x_\star,\param_\star}^{\bs \gamma^{\leftarrow i}} \left( \sup_{l
    \geq k} \norm{ S_{k,l}\left( \bs \gamma^{\leftarrow i}, \calK \right)
  }\geq \delta \right) = 0 \eqsp.
\]
\end{corollary}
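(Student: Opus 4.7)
The plan is to invoke \Cref{prop:controle:Sln} with $\bs \rho = \bs \gamma^{\leftarrow i}$, and to make a choice of auxiliary sequence $\bs \psi$ and of exponent $r \in \ocint{0,1}$ that renders every series appearing on the right-hand side of that proposition finite. Once this is done, both assertions reduce to tail estimates: for the first one, via the shift identity $\sum_{j \geq 1}(\gamma^{\leftarrow i}_j)^q = \sum_{m \geq i+1} \gamma_m^q$ and the fact that tails of a convergent series vanish as the starting index tends to infinity; for the second one, via the same fact applied with $i$ fixed and $k \to \infty$.

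For the choice of parameters, I would set $\psi_n = 1 \vee \lceil c \log(n+1) \rceil$ with $c > 1/|\log \cgeom|$, so that $\sum_j \rho_j \cgeom^{\psi_j}$ is summable. Since $\psi_j = O(\log j)$, one has $\rho_{j-\psi_j} \asymp j^{-\beta}$, hence the two series $\sum_j \rho_{j-\psi_j}^{1+r\alpha}\psi_j^{1+\alpha}$ and $\sum_j \rho_{j-\psi_j}^{1+r\pa}\psi_j^{3}$ converge as soon as $\beta\bigl(1 + r(\alpha \wedge \pa)\bigr) > 1$, i.e. $r > (1-\beta)/((\alpha\wedge\pa)\beta)$, while $\sum_j \rho_j^{p(1-r)} < \infty$ requires $r < 1 - 1/(p\beta)$. \Cref{hyp:sommes:pas:poly} is exactly the statement that these two inequalities admit a common solution $r$, which can moreover be chosen in $\ocint{0,1}$. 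A short direct calculation also shows that the lower bound on $\beta$ in \Cref{hyp:sommes:pas:poly} always exceeds $1/2$ (because $(1+y/p)/(1+y) > 1/2$ for every $p>1$ and $y \in \ocint{0,1}$), so $\sum_j \rho_j^2 < \infty$, which controls the term multiplied by $C_\A$.

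With $r$ and $\bs \psi$ fixed as above, applying \Cref{prop:controle:Sln} at the initial point $(x_\star, \param_\star) \in \X \times \calK$ yields a bound of the form $\delta^{-1} F_i(k)$, in which $F_i(k)$ is a finite linear combination, with coefficients depending only on $\calK$ and polynomial in $W(x_\star)$, of $\rho_k$ and of tails $\sum_{j \geq k}$ of the convergent series analysed above. For the first assertion, take $k = 1$: by the shift identity, $F_i(1)$ is a sum of tails of convergent series starting at index $i+1$, hence $F_i(1) \to 0$ as $i \to \infty$, and one chooses $i_\star$ so that $F_i(1) \leq \delta \varepsilon$ for every $i \geq i_\star$. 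For the second, fix $i$ and let $k \to \infty$: each tail in $F_i(k)$ vanishes, giving the claimed limit. The only non-routine point of the argument is the arithmetic verification that \Cref{hyp:sommes:pas:poly} simultaneously opens a nonempty window for $r$ and forces the auxiliary bound $\beta > 1/2$; this is the place where the specific shape of the lower bound on $\beta$ is genuinely used.
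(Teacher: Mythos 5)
Your proof is correct and takes essentially the same route as the paper: the paper's (very terse) proof likewise applies \Cref{prop:controle:Sln} with $\bs \rho = \bs \gamma^{\leftarrow i}$, chooses $r$ in exactly your window $\ooint{(\alpha \wedge \pa)^{-1}(\beta^{-1}-1) ; 1-(p\beta)^{-1}}$ and $\psi_j$ growing logarithmically (with $\tau > (\beta-1)/\ln\cgeom$, a slightly weaker requirement than your $c > 1/|\log \cgeom|$, both sufficient), and then concludes by the same tail estimates; your arithmetic check that \Cref{hyp:sommes:pas:poly} makes the window nonempty and forces $\beta>1/2$ is precisely what the paper leaves implicit. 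The only cosmetic adjustment needed is to take $\psi_n = n \wedge \left(1 \vee \lceil c \log (n+1) \rceil\right)$ so that the constraint $\psi_n \leq n$ of \Cref{prop:controle:Sln} also holds for small $n$.
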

\begin{proof}
  By \Cref{hyp:PhiandH}, $W(x_\star)< \infty$.  Let $\alpha, \beta$ be resp.
  given by \Cref{hyp:H:deltaH} and \Cref{hyp:sommes:pas:poly}.  We apply
  \Cref{prop:controle:Sln} with $r \in \ooint{(\alpha \wedge \pa)^{-1}
    (\beta^{-1}-1); 1 - (\beta p)^{-1}}$, $\bs \rho = \bs \gamma^{\leftarrow
    i}$ and $\psi_j \sim \tau \ln j$ when $j \to \infty$ for some $\tau \in
  \ooint{(\beta-1)/\ln \cgeom,+\infty}$.
\end{proof}

\paragraph{Proof of \Cref{theorem:general}}
Define the sequence of exit-times
\[
T_0 = 0 \qquad T_m = \inf\{n \geq T_{m-1}+1, I_n = I_{n-1}+1 \}, m \geq 1
\eqsp.
\]
\begin{enumerate}[label=(\roman*), wide=0pt, labelindent=\parindent]
\item Let $M_0$ be such that $\calL \cup \calK_0 \subset \{\param \in \Theta:
  w(\param) \leq M_0 \}$ and set $\calW \eqdef \{\param \in \Theta: w(\param)
  \leq M_0+1 \}$.  Since $\bs \gamma$ is decreasing,
  \Cref{lemma:rappel:AMP:item1} shows that there exist $\delta_\star>0$ and
  $i_\star \geq 0$ large enough such for any $i \geq i_\star$,
\[
\PP_{x_\star,\param_\star}^{\bs \gamma^{\leftarrow i}}\left( \sigma\left( \calW\right)<
  \infty \right) \leq \PP_{x_\star,\param_\star}^{\bs \gamma^{\leftarrow i}}\left( \sup_{n
    \geq 1} \norm{S_{1,n}\left(\bs \gamma^{\leftarrow i}, \calW\right) } >
  \delta_\star \right) \eqsp.
\]
\Cref{coro:controle:Sln}  shows that for any $\varepsilon \in \ooint{0,1}$,
there exists $j_\star$ such that for any $j \geq j_\star$
\[
\PP_{x_\star,\param_\star}^{\bs
  \gamma^{\leftarrow j}}\left( \sup_{n \geq 1} \norm{ S_{1,n}\left(\bs
      \gamma^{\leftarrow j}, \calW\right) } > \delta_\star \right) \leq
\varepsilon \eqsp.
\]
We can assume w.l.o.g. that $j_\star = i_\star$ and we do so. On the other
hand, since $\calW$ is a compact subset of $\thetaset$,
by~(\ref{eq:ConditionCompact}), there exists $m_\star$ such that for any $m
\geq m_\star$, $\calW \subset \calK_{m}$. Hereagain, we can assume that
$i_\star = m_\star$ and we do so.  Hence, for any $i \geq i_\star$
\begin{equation}
  \label{eq:contraction:retour}
 \PP_{x_\star,\param_\star}^{\bs
  \gamma^{\leftarrow i}}\bigg( \sigma\left( \calW\right)< \infty \bigg) \leq
\varepsilon \eqsp.
\end{equation}
This yields for all $i \geq i_\star$,
\begin{align*}
  \overline{\PP}_{x_\star,\param_\star,0} \left( T_{i+1} < \infty \right) & =
  \overline{\PE}_{x_\star,\param_\star,0} \left[ \indic{T_{i} < \infty} \
    \PP_{x_\star,\param_\star}^{\bs \gamma^{\leftarrow i}} \left(
      \sigma\left( \calK_i\right) < \infty \right) \right] \\
  & \leq \overline{\PP}_{x_\star,\param_\star,0} \left( T_{i} < \infty \right)
  \PP_{x_\star,\param_\star}^{\bs \gamma^{\leftarrow i}} \left( \sigma\left(
      \calW\right) < \infty \right) \leq \varepsilon^{i-i_\star} \eqsp,
\end{align*}
where we used (\ref{eq:contraction:retour}) and a trivial induction in the last
inequality. Since $\varepsilon \in \ooint{0,1}$, we have $\sum_i
\overline{\PP}_{x_\star,\param_\star,0} \left( T_{i+1} < \infty \right) <
\infty$ which yields $\overline{\PP}_{x_\star,\param_\star,0}(\limsup_i \{T_i <
\infty \}) =0$ by the Borel-Cantelli lemma.
\item By \Cref{theorem:general}(i), $I \eqdef \sup_n I_n$ is finite
  $\overline{\PP}_{x_\star,\param_\star,0}$-\as\ and
  $\overline{\PP}_{x_\star,\param_\star,0}(\forall n \geq 1, \param_n \in
  \calK_I) =1$.  Since $I$ is finite \as , it is equivalent to prove that for
  any $i \geq 0$, on the set $\{I=i \}$, $\lim_{k} \mathrm{d}(\param_k, \calL)
  = 0$ \as\ Let $i$ be fixed. We apply \Cref{lemma:rappel:AMP:item2} with $\bs
  \rho = \bs \gamma^{\leftarrow i}$, $\vartheta_k = \param_{T_{i}+k}$ and
  $\xi_j \leftarrow \H{\param_{j}}{X_{j+1}} -h\left(\param_{j}
  \right)$.
  \\
  $h$ is Holder-continuous under \Cref{hyp:noyau}, \Cref{hyp:Wfluctuation} and
  \Cref{hyp:H:deltaH}.  Since $\sum_k \gamma_k = +\infty$ and $\lim_k
  \gamma_k=0$ by assumptions, then $\sum_k \rho_k = \infty$ and $\lim_k \rho_k
  = 0$.  For any $\delta >0$, by applying the strong Markov property with the
  stopping-time $T_i$, we have
\begin{align*}
  & \overline{\PP}_{x_\star,\param_\star,0}\left( \limsup_k \sup_{l \geq k}
    \norm{ \sum_{j=k}^l \gamma_{i+j+1} \A_{\param_{T_i+j}} \left\{
        \H{\param_{T_i+j}}{X_{T_i+j+1}} -h\left(\param_{T_i+j} \right)\right\} } \geq \delta, I=i \right) \\
  & \leq {\PP}_{x_\star,\param_\star}^{\bs \gamma^{\leftarrow i}} \left(
    \limsup_k \sup_{l \geq k} \norm{ \sum_{j=k}^l \gamma_{i+j+1}
      \A_{\param_{T_i+j}} \left\{ \H{\param_{j}}{X_{j+1}} -h\left(\param_{j}
        \right)\right\} } \geq
    \delta, \sigma(\calK_i) = +\infty\right) \\
  & \leq {\PP}_{x_\star,\param_\star}^{\bs \gamma^{\leftarrow i}} \left(
    \limsup_k \sup_{l \geq k} \norm{ S_{k,l}\left( \bs \gamma^{\leftarrow i},
        \calK_i \right) }\geq
    \delta \right) \\
  & \leq \lim_k {\PP}_{x_\star,\param_\star}^{\bs \gamma^{\leftarrow i}} \left(
    \sup_{l \geq k} \norm{ S_{k,l}\left( \bs \gamma^{\leftarrow i}, \calK_i
      \right) } \geq \delta \right) \eqsp.
\end{align*}
The RHS is zero by \Cref{coro:controle:Sln}.  Hence, by choosing
$\A_{\vartheta_j} = \mathrm{I}_{d \times d}$, the
\Cref{lemma:rappel:AMP:item2}-\ref{lemma:rappel:AMP:item2:hyp4} holds; note
also that with this choice of $\A_\theta$, $C_\A = 0$.  Let us check
\Cref{lemma:rappel:AMP:item2}-\ref{lemma:rappel:AMP:item2:hyp5}.  Under
\Cref{theorem:general}-\ref{theorem:general:hyp1surw},
\Cref{lemma:rappel:AMP:item2}-
\ref{lemma:rappel:AMP:item2:hyp5}-\ref{lemma:rappel:AMP:item2:hyp5a} holds.

Assume now that \Cref{theorem:general}-\ref{theorem:general:hyp2surw} is satisfied; we
prove that \Cref{lemma:rappel:AMP:item2}-\ref{lemma:rappel:AMP:item2:hyp5}-\ref{lemma:rappel:AMP:item2:hyp5b} holds.
Along the same lines as above, and
choosing $\A_{\theta_j}$ equal to the transpose of $\nabla w(\theta_j)$, we
establish that $\lim_k \sum_{j=1}^k \rho_j \pscal{\nabla w(\vartheta_j)}{
  \xi_j}$ exists.  By \Cref{hyp:PhiandH} and since $\sup_\calK \norm{h} <
\infty$, there exists a constant $C$ such that
 \begin{align*}
  & \overline{\PP}_{x_\star,\param_\star,0} \left( \limsup_k \sup_{l \geq k}
     \sum_{j=k}^l \gamma_{i+j+1}^2 \norm{ \H{\param_{T_i+j}}{X_{T_i+j+1}}
       -h\left(\param_{T_i}\right)}^2 \geq \delta, I=i \right) \\
     & \leq \lim_k {\PP}_{x_\star,\param_\star}^{\bs \gamma^{\leftarrow i}}
     \left( \sup_{l \geq l} \sum_{j=k}^k \gamma^{\leftarrow i}_{j+1}
       (W^2(X_{j+1}) +1) \geq \delta / C \right) \eqsp.
 \end{align*}
 The RHS tends to zero since $\sup_j \PE_{x_\star,\param_\star}^{\bs
   \gamma^{\leftarrow i}} \left[W^2(X_j) \right] < \infty$ (we assumed $p \geq
 2$ in \Cref{hyp:noyau}-\ref{hyp:noyau:control:W:traj}) and $\sum_j \gamma_j^2
 < \infty$ (we assumed that $\beta >1/2$).  Hence, \Cref{lemma:rappel:AMP:item2}-\ref{lemma:rappel:AMP:item2:hyp5}-\ref{lemma:rappel:AMP:item2:hyp5b}
 holds.

We then conclude by \Cref{lemma:rappel:AMP:item2} that
$\overline{\PP}_{x_\star, \param_\star,0}$-\as\, on the set ${I=i}$, the
sequence $\{\param_k, k \geq 0\}$ converges to a connected component of
$\calL$.
\end{enumerate}

\subsection{Proof of \Cref{prop:controle:Sln}}
\label{sec:proof:prop:controle:Skl}
Let $\bs \rho$ be a non-increasing positive sequence and $\calK$ be a compact
subset of $\thetaset$ such that $\calK_0 \subseteq \calK$.  Throughout this
section, $\bs \rho$ and $\calK$ are fixed; we will therefore use the notations
$S_{k,l}$ and $\sigma$ instead of $S_{k,l}(\bs \rho, \calK)$ and
$\sigma(\calK)$. Set
\begin{equation} \label{eq:definition:Cstar}
C_\star \eqdef \sup_{\param \in \calK} \{ |g_\param|_W + |P_\param g_\param|_W \} \vee \sup_{\param \in \calK} \norm{\A_\param} \eqsp,
\end{equation}
where $g_\param$ is the solution to the Poisson
equation~(\ref{eq:poisson:equation}). $C_\star$ is finite by
(\ref{eq:gtheta:controlK}), \Cref{hyp:PhiandH} and
\Cref{hyp:noyau}-\ref{hyp:noyau:ergo:geom}.  By~\eqref{eq:poisson:equation},
$\H{\param_{j-1}}{X_j} - h(\param_{j-1}) = \g{j-1}(X_j) - \P{j-1}\g{j-1}(X_j)$
for any $j \geq 1$.  We then write $ S_{k,l} = \indic{\sigma \geq l}
\sum_{i=1}^4 T_{k,l}^{(i)}$ with
 \begin{align*}
   T_{k,l}^{(1)} & \eqdef \sum_{j=k}^l \rho_j \A_{\theta_{j-1}} \left(\g{j-1}(X_j) - \P{j-1}\g{j-1}(X_{j-1}) \right) \indic{j \leq  \sigma} \eqsp, \\
   T_{k,l}^{(2)}& \eqdef \sum_{j=k}^{l} \rho_{j+1} \A_{\theta_{j}} \left(\P{j}\g{j}(X_{j})  - \P{j-1}\g{j-1}(X_{j}) \right) \indic{j+1 \leq \sigma} \eqsp, \\
   T_{k,l}^{(3)}& \eqdef \sum_{j=k}^{l}   \left( \rho_{j+1} \A_{\theta_{j}}  \indic{j+1 \leq \sigma} -  \rho_j \A_{\theta_{j-1}} \indic{j \leq \sigma} \right) \P{j-1}\g{j-1}(X_j) \eqsp, \\
   T_{k,l}^{(4)} & \eqdef \rho_k \A_{\theta_{k-1}} \P{k-1}\g{k-1}(X_{k-1}) -
   \rho_{l+1} \A_{\theta_{l}} \P{l}\g{l}(X_{l}) \indic{l+1 \leq \sigma} \eqsp.
 \end{align*}
For any measurable set $\calA$,  we can bound by Markov's and Jensen's inequality
\begin{align*}
  \PP_{x,\param }^{\bs \rho}\left(\sup_{l\ge k} |S_{k,l}|\ge \delta \right)
  &\le \frac{3}{\delta}\Bigg( \PE_{x,\param }^{\bs \rho}\left[\sup_{l\ge k}
    \left|T_{k,l}^{(1)}\right|^p\right]^{1/p} + \PE_{x,\param }^{\bs
    \rho}\left[\sup_{k\le l\le \sigma}
    \left|T_{k,l}^{(3)} + T_{k,l}^{(4)}\right|\right] \\
  &\phantom{\le\frac{1}{\delta}} + \PE_{x,\param }^{\bs
    \rho}\left[\sup_{k\le l\le \sigma} \left|T_{k,l}^{(2)}\indic{\calA}\right|
  \right] \Bigg) + \PP_{x,\param }^{\bs \rho}(\calA^c)\eqsp .
\end{align*}
The terms on the right are bounded individually by the three lemmas below,
concluding the proof of \Cref{prop:controle:Sln}.

 \begin{lemma}
   Assume \Cref{hyp:PhiandH}, \Cref{hyp:noyau} and $\sup_{\theta \in \calK}
   \norm{\A_\theta} < \infty$.  There exists a constant $C$
   - which does not depend on $\bs \rho$ - such that for any $x \in \X$,
   $\param \in \calK_0$ and $k \geq 1$,
 \[
 \PE_{x,\param }^{\bs \rho}\left[ \sup_{l \geq k} \norm{ T_{k,l}^{(1)}}^p
 \right] \leq C \, W^p(x) \sum_{\ell \geq k} \rho_\ell^p \eqsp.
 \]
 \end{lemma}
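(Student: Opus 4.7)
The central observation is that the sequence $\{T_{k,l}^{(1)}\}_{l\ge k}$ is a martingale with respect to the natural filtration $\mathcal{F}_l = \sigma((X_j,\theta_j), j \leq l)$ under $\PP_{x,\param}^{\bs\rho}$. Indeed, by the Poisson equation \eqref{eq:poisson:equation} combined with the controlled-Markov property,
\[
\PE_{x,\param}^{\bs\rho}\!\left[g_{\theta_{j-1}}(X_j)\,\big|\,\mathcal{F}_{j-1}\right] = P_{\theta_{j-1}}g_{\theta_{j-1}}(X_{j-1}),
\]
and since $\{j \le \sigma\} = \{\theta_0,\dots,\theta_{j-1}\in\calK\}\in\mathcal{F}_{j-1}$, the factor $\A_{\theta_{j-1}}\indic{j\le\sigma}$ is predictable. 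Thus each summand
\[
\Delta_j \eqdef \rho_j\,\A_{\theta_{j-1}}\bigl(g_{\theta_{j-1}}(X_j)-P_{\theta_{j-1}}g_{\theta_{j-1}}(X_{j-1})\bigr)\indic{j\le\sigma}
\]
is a martingale increment.

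I would then invoke a standard $L^p$ martingale inequality (Doob's maximal inequality combined with the von Bahr--Esseen / Burkholder bound, see e.g.\ \cite{hall:heyde:1980}) to obtain, for every $n\ge k$,
\[
\PE_{x,\param}^{\bs\rho}\!\left[\sup_{k\le l\le n}\norm{T_{k,l}^{(1)}}^{p}\right]\le C_p \sum_{j=k}^{n}\PE_{x,\param}^{\bs\rho}\!\left[\norm{\Delta_j}^{p}\right],
\]
and then send $n\to\infty$ by monotone convergence.

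The pointwise control of each increment is direct: by \eqref{eq:definition:Cstar}, $\|\A_\theta\|\vee|g_\theta|_W\vee|P_\theta g_\theta|_W\le C_\star$ on $\calK$, whence
\[
\norm{\Delta_j}^{p}\le C\,\rho_j^{p}\bigl(W^{p}(X_j)+W^{p}(X_{j-1})\bigr)\indic{j\le\sigma}
\]
for a constant $C$ depending only on $C_\star$ and $p$. Taking conditional expectations and invoking the drift condition \Cref{hyp:noyau}-\ref{hyp:noyau:control:W:traj}, $P_\theta W^{p}\le\varrho W^{p}+b$ uniformly over $\theta\in\calK$, one obtains by a trivial induction that $\PE_{x,\param}^{\bs\rho}[W^{p}(X_j)]\le\varrho^{j}W^{p}(x)+b/(1-\varrho)\le C\,W^{p}(x)$ uniformly in $j\ge 0$, using $W\ge 1$ to absorb the additive constant. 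Combining everything yields $\PE_{x,\param}^{\bs\rho}[\norm{\Delta_j}^{p}]\le C\,\rho_j^{p}\,W^{p}(x)$ and hence the stated bound.

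The only delicate step is the $L^{p}$ martingale inequality: von Bahr--Esseen gives exactly the form $C_p\sum\PE[\norm{\Delta_j}^{p}]$ when $p\in(1,2]$, which is the natural regime suggested by the drift hypothesis. For larger $p$ one can either apply the inequality with exponent $p\wedge 2$ and exploit $W^{p\wedge 2}\le W^{p}$, or appeal to a Burkholder--Rosenthal-type bound together with the conditional moment estimate $\PE[\Delta_j^{2}|\mathcal{F}_{j-1}]\le C\rho_j^{2}W^{2}(X_{j-1})$; in either case the dependence on $\bs\rho$ is linear in $\sum\rho_\ell^{p}$ as stated.
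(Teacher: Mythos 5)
Your proposal is correct in its main line and follows essentially the same route as the paper's proof: recognize $\{T^{(1)}_{k,l},\ l\ge k\}$ as an $\F_l$-martingale, combine Doob's maximal inequality with a Burkholder/von Bahr--Esseen bound (the paper cites \cite[Theorems 2.2 and 2.10]{hall:heyde:1980}), and bound each increment by $C\rho_j^pW^p(x)$ using the constant $C_\star$ of \eqref{eq:definition:Cstar} together with the drift condition \Cref{hyp:noyau}-\ref{hyp:noyau:control:W:traj}; you in fact supply more detail than the paper's three-line argument. One point of care: when iterating the drift condition you must keep the factor $\indic{j\le\sigma}$ inside the expectation (your increment bound carries it, but your displayed induction $\PE_{x,\param}^{\bs\rho}[W^p(X_j)]\le\varrho^jW^p(x)+b/(1-\varrho)$ drops it); this matters because \Cref{hyp:noyau}-\ref{hyp:noyau:control:W:traj} holds only uniformly over compact sets, and nothing controls $P_\theta W^p$ once $\param_n$ has left $\calK$, so the bound is legitimate only for the stopped quantities.

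The only genuinely wrong step is your closing remark on $p>2$. Applying the inequality with exponent $p\wedge 2=2$ bounds $\PE_{x,\param}^{\bs\rho}[\sup_l\|T^{(1)}_{k,l}\|^2]$, which is not the moment the lemma asserts; and a Burkholder--Rosenthal bound produces the conditional-variance term, which after the same increment estimates gives $W^p(x)\big(\sum_{\ell\ge k}\rho_\ell^2\big)^{p/2}$ --- this dominates $W^p(x)\sum_{\ell\ge k}\rho_\ell^p$ (by superadditivity of $t\mapsto t^{p/2}$), so the dependence on $\bs\rho$ is \emph{not} ``linear in $\sum_\ell\rho_\ell^p$'' by either of your two suggestions. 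Indeed, for $p>2$ and a non-increasing sequence with $\sum_\ell\rho_\ell^p<\infty$ but $\sum_\ell\rho_\ell^2=\infty$ (e.g.\ $\rho_\ell=\ell^{-\beta}$ with $\beta\in\ooint{1/p,1/2}$), the left-hand side can be infinite while the claimed right-hand side is finite, so no argument can close this case as stated. To be fair, the paper's own proof is equally silent here and is complete as written only for $p\in\ocint{1,2}$; your main argument matches it exactly on that range.
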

 \begin{proof}
   Let $k \geq 1$ be fixed.  Note that $\{T_{k,l}^{(1)}, l \geq k \}$ is a
   $\F_l$-martingale under the probability $\PP_{x,\param}^{\bs \rho}$ which
   implies that for any $p>1$, there exists a constant $C$ such that (see \eg\
   \cite[Theorems 2.2 and 2.10]{hall:heyde:1980})
  \begin{align*}
    \PE_{x,\param}^{\bs \rho} \left[ \sup_{l \geq k} \norm{ T_{k,l}^{(1)} }^p
    \right] &\leq \lim_{L \to \infty} \PE_{x,\param}^{\bs \rho} \left[
      \PE_{x,\param}^{\bs \rho} \left[ \sup_{k \leq l \leq k+L}
        \norm{T_{k,l}^{(1)}}^p \Big\vert \F_k \right] \right]  \\
    &\leq C \lim_{L \to \infty} \PE_{x,\param}^{\bs \rho} \left[
      \PE_{x,\param}^{\bs \rho}\left[ \norm{ T_{k,k+L}^{(1)} }^p \Big \vert
        \F_k \right] \right] \\
    &\leq C \, W^p(x) \ \left(\sum_{l \geq k} \rho_l^p \right) \eqsp.
  \end{align*}
 \end{proof}
 \begin{lemma}
   Assume \Cref{hyp:PhiandH}, \Cref{hyp:noyau}, $\sup_{\theta \in \calK}
   \norm{\A_\theta} < \infty$ and $\sup_{\theta , \theta' \in \calK}
   \norm{\theta - \theta'}^{-1} \norm{\A_\theta - \A_{\theta'}} \leq C_\A$.
   There exists a constant $C$ - which does not depend on $\bs \rho$ - such
   that for any $x \in \X$, $\param \in \calK_0$, $k \geq 1$,
 \begin{align*}
   \PE_{x, \param}^{\bs \rho} \bigg[ \sup_{k \leq l \leq \sigma} \norm{
     T_{k,l}^{(3)} + T_{k,l}^{(4)} } \bigg] \leq C \, \left( \rho_k W(x) + C_\A
     \sum_{j \geq k} \rho_j^2 W^2(x)\right) \eqsp.
 \end{align*}
 \end{lemma}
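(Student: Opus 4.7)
The proof of the bound on $\PE_{x,\param}^{\bs\rho}[\sup_{k\le l\le \sigma}\norm{T_{k,l}^{(3)}+T_{k,l}^{(4)}}]$ splits $T^{(3)}_{k,l}$ via the identity
\[
\rho_{j+1}\A_{\param_j} - \rho_j\A_{\param_{j-1}} = (\rho_{j+1}-\rho_j)\A_{\param_j} + \rho_j\bigl(\A_{\param_j}-\A_{\param_{j-1}}\bigr),
\]
which holds at every index $j$ where the indicators $\indic{j+1\le \sigma}$ and $\indic{j\le\sigma}$ coincide; at the only index where they can differ (namely $j=\sigma$ when $l=\sigma$) the extra contribution has the same form as the terms already present in $T^{(4)}_{k,l}$. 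The first summand captures the effect of the decreasing stepsize, the second exploits the Lipschitz hypothesis on $\param\mapsto \A_\param$, and $T^{(4)}_{k,l}$ itself is treated as a pure boundary contribution via $|P_\param g_\param|_W \le C_\star$ from \eqref{eq:definition:Cstar}.

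For the stepsize piece, since $\bs\rho$ is non-increasing, each summand has norm at most $C_\star^2(\rho_j-\rho_{j+1})W(X_j)\ge 0$. Because these summands are nonnegative, taking $\sup_{l\le\sigma}$ yields the full series; Fubini's theorem combined with the uniform moment bound $\sup_j \PE_{x,\param}^{\bs\rho}[W(X_j)] \le C W(x)$, itself obtained by iterating the drift inequality \Cref{hyp:noyau}-\ref{hyp:noyau:control:W:traj}, produces a contribution of order $\rho_k W(x)$ after the telescoping $\sum_{j\ge k}(\rho_j-\rho_{j+1})=\rho_k$.

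For the Lipschitz-in-$\param$ piece, the hypothesis on $\A$ together with \Cref{hyp:PhiandH} gives
\[
\norm{\A_{\param_j}-\A_{\param_{j-1}}} \le C_\A\, \rho_j\, \norm{\H{\param_{j-1}}{X_j}} \le C_\A C_\star\, \rho_j W(X_j),
\]
so each summand is bounded pointwise by $C_\A C_\star^2\, \rho_j^2 W^2(X_j)$, which is again nonnegative. The sup over $l$ therefore collapses to the full sum, and iterating the drift inequality on $W^p$ (together with $W\ge 1$ and the implicit $p\ge 2$ consistent with the $W^2(x)$ in the conclusion) delivers $\PE[W^2(X_j)]\le C' W^2(x)$, yielding the announced $C_\A C' W^2(x)\sum_{j\ge k}\rho_j^2$ contribution.

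The principal obstacle is controlling the second boundary term $\rho_{l+1}\A_{\param_l} P_{\param_l} g_{\param_l}(X_l)\indic{l+1\le\sigma}$, since the expectation of a supremum over $l$ cannot be exchanged with the supremum of expectations. I would address this by absorbing this boundary term together with the $j=l$ contribution of the Abel rearrangement of $T^{(3)}_{k,l}$, so that $T^{(3)}_{k,l}+T^{(4)}_{k,l}$ reorganises into a fully telescoping part (treated as above) plus a term whose partial sums are bounded without requiring a supremum. Equivalently, a Doob-type maximal inequality applied to the geometric-drift supermartingale associated with $W^p$ during the sojourn in $\calK$ delivers the required $O(\rho_k W(x))$ bound, completing the argument.
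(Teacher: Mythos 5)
Your handling of $T^{(3)}_{k,l}$ coincides with the paper's proof: the same splitting $\rho_{j+1}\A_{\param_j}-\rho_j\A_{\param_{j-1}}=(\rho_{j+1}-\rho_j)\A_{\param_j}+\rho_j(\A_{\param_j}-\A_{\param_{j-1}})$, the same observation that the indicators $\indic{j+1\le\sigma}$ and $\indic{j\le\sigma}$ can only disagree when $j=l=\sigma$ (the paper isolates this as a separate sum $T^{(3,b)}_{k,l}$, which vanishes for every $l<\sigma$ and reduces to a single term), the telescoping $\sum_{j\ge k}(\rho_j-\rho_{j+1})=\rho_k$, the bound $\norm{\A_{\param_j}-\A_{\param_{j-1}}}\indic{j<\sigma}\le C_\A C_\star \rho_j W(X_j)$, and the drift-based moment bounds $\PE_{x,\param}^{\bs\rho}[W(X_j)\indic{j\le\sigma}]\le C W(x)$ and $\PE_{x,\param}^{\bs\rho}[W^2(X_j)\indic{j\le\sigma}]\le C W^2(x)$ (the latter needing $p\ge 2$). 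Up to there the proposal is correct and is exactly the paper's argument.

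The genuine gap is the endpoint term $\rho_{l+1}\A_{\param_l}P_{\param_l}g_{\param_l}(X_l)\indic{l+1\le\sigma}$, and neither of your two proposed repairs closes it. For (a): no rearrangement can remove this term, because $T^{(3)}_{k,l}+T^{(4)}_{k,l}$ \emph{is already} the Abel-summed form of a telescoping sum whose value depends on the terminal index $l$; any summation by parts of such a sum must carry a boundary term evaluated at time $l$, so reorganising merely reintroduces what you are trying to avoid. For (b): under \Cref{hyp:noyau}-\ref{hyp:noyau:control:W:traj} the process $W^p(X_l)$ is \emph{not} a supermartingale during the sojourn in $\calK$ (the additive constant $b$ spoils it; the corrected process $\varrho^{-l}\left(W^p(X_l)-b/(1-\varrho)\right)$ is a supermartingale but signed), and in any case Doob's maximal inequality for nonnegative supermartingales controls $\PP(\sup_l S_l\ge\lambda)$, not $\PE[\sup_l S_l]$. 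The generic argument that does go through, namely $\sup_l a_l\le\left(\sum_l a_l^p\right)^{1/p}$ combined with $\PE_{x,\param}^{\bs\rho}[W^p(X_l)\indic{l\le\sigma}]\le C W^p(x)$, yields a bound of order $W(x)\left(\sum_{l\ge k}\rho_l^p\right)^{1/p}$, which is \emph{not} the claimed $O(\rho_k W(x))$. The paper's own route is entirely different from both of your suggestions: it never invokes a maximal inequality, but reduces the suprema to single-time boundary terms — $T^{(3,b)}_{k,l}$ survives only at $j=l=\sigma$, and $T^{(4)}_{k,l}$ consists of two single-time terms — which it dominates pathwise by $C_\star^2\rho_k W(X_{k-1})$, $C_\star^2\rho_k W(X_l)\indic{l+1\le\sigma}$ and $C_\star^2\rho_k W(X_\sigma)\indic{\sigma<\infty}$ using $\rho_{l+1}\le\rho_k$ and \eqref{eq:definition:Cstar}, before taking expectations in one line. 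You are right that this last step conceals precisely the sup-versus-expectation exchange you worry about, and that is a fair criticism of the paper's terseness; but identifying the obstacle and gesturing at two tools that do not apply does not constitute a proof, so your argument is incomplete exactly at the step you call the principal one.
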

 \begin{proof}
   By (\ref{eq:definition:Cstar}), we have $\left| \P{i} \g{i} \right|_W
   \indic{i< \sigma} \leq C_\star$ and $\sup_{\theta \in \calK}
   \norm{\A_\theta} \leq C_\star$.  Since $\bs \rho$ is non-increasing, this
   yields $\PE_{x, \param}^{\bs \rho} \bigg[ \sup_{k \leq l \leq \sigma} \norm{
     T_{k,l}^{(4)} } \bigg] \leq 2 C_\star^2 \rho_k W(x)$.  We write
   $T_{k,l}^{(3)} = T_{k,l}^{(3,a)} - T_{k,l}^{(3,b)}$ with
\begin{align*}
 &  T_{k,l}^{(3,a)} \eqdef \sum_{j=k}^l \left(\rho_{j+1} \A_{\theta_j}- \rho_j \A_{\theta_{j-1}} \right)
  P_{\theta_{j-1}} g_{\theta_{j-1}}(X_j) \indic{j+1 \leq \sigma} \eqsp, \\
  & T_{k,l}^{(3,b)} \eqdef \sum_{j=k}^l \rho_j  \A_{\theta_{j-1}} P_{\theta_{j-1}}
  g_{\theta_{j-1}}(X_j) \indic{\sigma=j} \eqsp.
\end{align*}
Note that $\{j = \sigma \} \cap \{l \leq \sigma\} = \emptyset$ for any $j < l$. Hence
\[
\PE_{x, \param}^{\bs \rho} \bigg[ \sup_{k \leq l \leq \sigma} \norm{
  T_{k,l}^{(3,b)} } \bigg] = \PE_{x, \param}^{\bs \rho} \bigg[ \rho_l \norm{A_{\theta_{l-1}}} \norm{
  P_{\param_{l-1}} g_{\param_{l-1}}(X_l) } \indic{ l = \sigma} \bigg] \leq
C_\star^2 \rho_k W(x) \eqsp,
\]
where in the inequality we used that $\bs
\rho$ is non-increasing.  Finally, along the same lines, we get
\begin{align*}
  |\rho_{j+1} \A_{\theta_j} - \rho_j \A_{\theta_{j-1}}| \norm{P_{\param_{j-1}} g_{\param_{j-1}}(X_j) }
  \indic{j+1 \leq \sigma} & \leq (\rho_j -\rho_{j+1}) C_\star^2 W(X_j) \indic{j
    <
    \sigma}  \\
  & + \rho_j \norm{\A_{\theta_j} -\A_{\theta_{j-1}}} C_\star W(X_j) \un_{j <
    \sigma}
\end{align*}
Since $\norm{\A_{\theta_j} -\A_{\theta_{j-1}}} \un_{j < \sigma} \leq C_\A
C_\star \rho_j W(X_j)$, this yields $\PE_{x, \param}^{\bs \rho} \bigg[ \sup_{k
  \leq l \leq \sigma} \norm{ T_{k,l}^{(3,a)} } \bigg] \leq C_\star^2 \left(
  \rho_k W(x)+ C_\A \rho_k^2 W^2(x)\right)$.
\end{proof}

For any $r \in \ooint{0,1}$ and $0 \leq k - \psi_k \leq n$, set
 \[
 \calA_r (k,n) \eqdef \bigcap_{\ell=k-\psi_k}^n \left\{ \norm{\param_\ell - \param_{\ell-1}
   } \leq \rho_\ell^r\right\} = \left\{ \sup_{k-\psi_k \leq \ell \leq n} \frac{ \norm{\param_\ell
     - \param_{\ell-1} }}{\rho_\ell^r} \leq 1 \right\} \eqsp.
 \]
 \begin{proposition} \label{prop:terme:continuite}
   Assume \Cref{hyp:PhiandH}, \Cref{hyp:noyau}, \Cref{hyp:Wfluctuation},
   \Cref{hyp:H:deltaH} and $\sup_{\theta \in \calK} \norm{\A_\theta} < \infty$.
   \begin{enumerate}[label=(\roman*)]
   \item \label{prop:terme:continuite:item1} There exists a constant
     $C<\infty$ - which does not depend on $\bs \rho$ - such that for any $r \in
     \ooint{0,1}$, $x \in \X$, $\param \in \thetaset$ and $k \geq
     1$,
 \begin{equation} \label{eq:terme:continuite:item1}
 \PP_{x, \param}^{\bs \rho}\bigg( \calA_{r}(k,\sigma)\bigg) \geq 1 - C \left( \sum_{\ell \geq k-\psi_k} \rho_\ell^{p(1-r)} \right) \ W^p(x)  \eqsp.
 \end{equation}
\item \label{prop:terme:continuite:item2} There exists a constant
  $C>0$ - which does not depend on $\bs \rho$ - such that for any $r \in
  \ooint{0,1}$, $x \in \X$, $\param \in \thetaset$, $k \geq 1$, and
  for any positive sequence $\bs \psi=\{\psi_j, j \in \Nset\}$ such that $1 \leq \psi_j \leq j$,
 \begin{align*}
   C \, \PE_{x, \param}^{\bs \rho} & \left[ \sup_{k \leq l \leq \sigma} \norm{
       T_{k,l}^{(2)}} \indic{\calA_{r}(k,\sigma)}\right]  \\
& \leq \sum_{j \geq k}
   \rho_{j-\psi_j}^{1+r\alpha} \psi_j^{1+\alpha} + W(x) \sum_{j \geq k} \left(
   \rho_{j} \cgeom^{\psi_j} + \rho_{j-\psi_j}^{1+r\pa} \psi_j^3 \right)
 \eqsp.
 \end{align*}
 \end{enumerate}
 \end{proposition}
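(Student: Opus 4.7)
The plan has two parts. For (i), I will run a routine union-bound argument. Expanding $\param_\ell - \param_{\ell-1} = \rho_\ell \H{\param_{\ell-1}}{X_\ell}$ and using that on $\{\ell \leq \sigma\}$ one has $\param_{\ell-1} \in \calK$, \Cref{hyp:PhiandH} gives $\norm{\H{\param_{\ell-1}}{X_\ell}} \leq C_H W(X_\ell)$ with $C_H \eqdef \sup_{\param \in \calK} |H_\param|_W$. Hence $\{\norm{\param_\ell-\param_{\ell-1}} > \rho_\ell^r\} \cap \{\ell \leq \sigma\} \subseteq \{W(X_\ell) > C_H^{-1} \rho_\ell^{r-1}\}$. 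Markov's $p$-th moment inequality combined with iteration of \Cref{hyp:noyau}-\ref{hyp:noyau:control:W:traj} gives $\PP(W(X_\ell) > C_H^{-1} \rho_\ell^{r-1}) \leq C \rho_\ell^{p(1-r)}(W^p(x)+1)$; a union bound over $\ell \in \{k-\psi_k, \ldots, \sigma\}$ concludes.

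For (ii), the plan is to apply \Cref{prop:ContinuitePoisson} to each summand of $T_{k,l}^{(2)}$ with $\param = \param_j$, $\param' = \param_{j-1}$, anchor $\vartheta_j \eqdef \param_{j-\psi_j}$, truncation length $n = \psi_j$, and radius $L_j \eqdef \psi_j \rho_{j-\psi_j}^r$. Telescoping on $\calA_r$ with $\bs \rho$ non-increasing yields $\norm{\param_j - \vartheta_j} \vee \norm{\param_{j-1} - \vartheta_j} \leq L_j$, so these choices are admissible. Under \Cref{hyp:Wfluctuation}, on $\calA_r$, the explicit terms in the bound of \Cref{prop:ContinuitePoisson} satisfy $\norm{\param_j - \param_{j-1}}^\alpha \leq \rho_j^{r\alpha}$, $D_W(\param_j, \param_{j-1}) \leq C \rho_j^{r\pa}$, and $D_W(\param_j, \vartheta_j) \vee D_W(\param_{j-1}, \vartheta_j) \leq C \psi_j^\pa \rho_{j-\psi_j}^{r\pa}$. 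After multiplying by $\rho_{j+1} \norm{\A_{\param_j}} \leq C_\star \rho_{j-\psi_j}$ and summing, these produce the $\rho_{j-\psi_j}^{1+r\alpha} \psi_j^{1+\alpha}$ contribution and, via $\psi_j^{1+\pa} \leq \psi_j^3$, the $W(x) \rho_{j-\psi_j}^{1+r\pa} \psi_j^3$ contribution of the target.

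The delicate piece is $\sum_{l=1}^{\psi_j} P_{\vartheta_j}^l \calH_{\vartheta_j, L_j}(X_j)$: a naive ergodic split around $\pi_{\vartheta_j}$ leaves an $O(W(X_j))$ residual which, multiplied by $\rho_{j+1}$ and summed, would be divergent. The remedy is to approximate the conditional law of $X_j$ given $\F_{j-\psi_j}$ by $P_{\vartheta_j}^{\psi_j}(X_{j-\psi_j}, \cdot)$, using the telescoping identity
\[
P_{\param_{j-\psi_j}} \cdots P_{\param_{j-1}} - P_{\vartheta_j}^{\psi_j} = \sum_{i=0}^{\psi_j-1} P_{\param_{j-\psi_j}} \cdots P_{\param_{j-\psi_j+i-1}} (P_{\param_{j-\psi_j+i}} - P_{\vartheta_j}) P_{\vartheta_j}^{\psi_j-i-1},
\]
whose $\|\cdot\|_W$-norm is controlled on $\calA_r$ by $C \psi_j^{1+\pa} \rho_{j-\psi_j}^{r\pa} W(X_{j-\psi_j})$ via \Cref{hyp:Wfluctuation} and the drift condition. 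After this replacement the quantity becomes $\sum_{l=1}^{\psi_j} P_{\vartheta_j}^{\psi_j+l} \calH_{\vartheta_j, L_j}(X_{j-\psi_j})$, in which the kernel and the starting point are both aligned with $\vartheta_j$ and the chain has been effectively burned in for $\psi_j$ steps. Ergodicity (\Cref{hyp:noyau}-\ref{hyp:noyau:ergo:geom}) and \Cref{hyp:H:deltaH} then give $\psi_j \pi_{\vartheta_j}(\calH_{\vartheta_j, L_j}) + C \lambda^{\psi_j} W(X_{j-\psi_j}) \leq C \psi_j L_j^\alpha + C \lambda^{\psi_j} W(X_{j-\psi_j})$. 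Multiplying by $\rho_{j+1}$, taking expectation via $\PE[W(X_{j-\psi_j})] \leq C(W(x)+1)$, and summing, this contributes exactly the remaining $\psi_j^{1+\alpha} \rho_{j-\psi_j}^{1+r\alpha}$ and $W(x) \rho_j \lambda^{\psi_j}$ pieces.

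The main obstacle is this coupling step: both the telescoping error and the ergodicity residue are a priori $O(W)$, and the work lies in checking that the $\psi_j$-step burn-in collapses them into summable $\lambda^{\psi_j}$ and $\rho^{1+r\pa} \psi_j^3$ contributions rather than a divergent $\sum \rho_j W(x)$. The supremum over $l$ in $\sup_{k \leq l \leq \sigma} \norm{T^{(2)}_{k,l}}$ causes no further difficulty, as it is dominated by $\sum_{j \geq k} \rho_{j+1} \norm{\A_{\param_j}} \, \norm{P_{\param_j} g_{\param_j}(X_j) - P_{\param_{j-1}} g_{\param_{j-1}}(X_j)} \indic{j+1 \leq \sigma}$, already bounded above by the preceding analysis.
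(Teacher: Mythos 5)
Your proposal is correct and follows essentially the same route as the paper: part (i) is the same Markov-inequality-plus-drift argument, and part (ii) invokes \Cref{prop:ContinuitePoisson} with the identical choices ($\param \leftarrow \param_j$, $\param' \leftarrow \param_{j-1}$, $\vartheta \leftarrow \param_{j-\psi_j}$, $n \leftarrow \psi_j$, $L \leftarrow \psi_j\rho_{j-\psi_j}^{r}$), handles the explicit $\norm{\cdot}^\alpha$ and $D_W$ terms the same way, and removes the otherwise-divergent $O(W(X_j))$ residual in the $\calH$-term by shifting the evaluation point back $\psi_j$ steps and paying a $D_W$-cost per swapped kernel, which is exactly the paper's mechanism. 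The one imprecision is that your telescoping must be carried out under nested conditional expectations rather than as a pathwise operator identity: the intermediate parameters $\param_{j-\psi_j+1},\dots,\param_{j-1}$ are not $\F_{j-\psi_j}$-measurable, so the conditional law of $X_j$ given $\F_{j-\psi_j}$ is not literally the product $P_{\param_{j-\psi_j}}\cdots P_{\param_{j-1}}(X_{j-\psi_j},\cdot)$; the paper's step-by-step induction with the terms $\mathbb{B}_{l,j}$ is precisely the rigorous implementation of your swap, and it yields the same error bounds you state.
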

 \begin{proof}
Throughout this proof, $C$ denotes a constant which may change upon each appearance and only depends on $\calK$.
\begin{enumerate}[label=(\roman*),  wide=0pt, labelindent=\parindent]
\item By \Cref{hyp:PhiandH}, there exists a constant $C$ such that
   $\PP_{x,\param}^{\bs \rho}$-\as\ , on the set $\{ k - \psi_k \leq \ell \leq \sigma\}$,
   $\norm{ \theta_\ell - \theta_{\ell-1}} \leq C \rho_\ell W(X_\ell)$.  Hence, by the Markov inequality
 \begin{align*}
   1 - \PP_{x, \param}^{\bs \rho}\bigg( \calA_r(k,\sigma)\bigg) & \leq C
   \PE_{x, \param}^{\bs \rho}\left[ \sup_{k-\psi_k \leq \ell \leq \sigma}
     \rho_\ell^{p(1-r)} W^p(X_\ell)  \right] \\
   & \leq C \left( \sum_{\ell \geq k-\psi_k } \rho_\ell^{p(1-r)} \right) W^p(x)
   \eqsp.
 \end{align*}
\item We use \Cref{prop:ContinuitePoisson} with $\param \leftarrow \param_j$, $\param'
 \leftarrow \param_{j-1}$, $\vartheta \leftarrow \param_{j- \psi_j}$, $x
 \leftarrow X_j$, $n \leftarrow \psi_j$  and $L \leftarrow L_j \eqdef \psi_j \rho_{j
   -\psi_j+1}^r$.  Since $\bs \rho$ is non-increasing, observe that
 \[
 \left( \norm{\param_j - \param_{j -\psi_j} } \vee \norm{\param_{j-1} - \param_{j-\psi_j} } \right) \indic{\calA_r(k,j)}  \leq \psi_j \,
 \rho_{j-\psi_j+1}^r \eqsp,
 \]
 thus justifying that with the above definitions, we have $\norm{\param -
   \vartheta } \vee \norm{\param'-\vartheta } \leq L$.  By using $D_W(\param,
 \param'') \leq D_W(\param, \param') + D_W(\param', \param'')$ and $W \geq 1$,
 we write $ \sup_{k \leq l \leq \sigma} \norm{ T_{k,l}^{(2)} }
 \indic{\calA_r(k,\sigma)} \leq C \, \sum_{i=1}^2 \Xi_k^{(i)}$ with
\begin{align*}
  \Xi_k^{(1)} &\eqdef \sum_{j \geq k} \rho_{j} \psi_j  \A_{\theta_{j-1}} \left\{ \norm{\param_j -
      \param_{j-1}}^\alpha + 2 D_W(\param_j, \param_{j-1}) W(X_j)  \right. \\
  & \qquad \left. + 2 D_W(\param_{j-1}, \param_{j-\psi_j}) W(X_j)
  \right\}\indic{j \leq \sigma} \indic{\calA_r(k,j)} + \sum_{j \geq k} \rho_{j}  \A_{\theta_{j-1}}
  \cgeom^{\psi_j} W(X_j)
  \indic{j \leq \sigma}  \eqsp, \\
  \Xi_k^{(2)} &\eqdef \sum_{j \geq k} \rho_{j}  \A_{\theta_{j-1}} \sum_{l=1}^{\psi_j}
  P_{\param_{j-\psi_j}}^l \calH_{\param_{j-\psi_j},L_j}(X_j) \indic{j \leq
    \sigma} \indic{\calA_r(k,j-1)} \eqsp.
\end{align*}
Let us consider $\Xi_k^{(1)}$. By (\ref{eq:definition:Cstar}),
\Cref{hyp:Wfluctuation} and the monotonicity of $\bs \rho$, we have
\[
\PE_{x,\param}^{\bs \rho}\left[ \Xi_k^{(1)} \right] \leq C_\star \sum_{j \geq
  k} \rho_j^{1+r \alpha} \psi_j + C_\star W(x) \sum_{j \geq k} \rho_{j}
\cgeom^{\psi_j} + C W(x)\sum_{j \geq k} \psi_j^2 \rho_{j-\psi_j}^{1+r \pa}
\eqsp.
\]
Let us now consider $\Xi_k^{(2)}$. Set $\mathbb{B}_{l,j} \eqdef
\PE_{x,\param}^{\bs \rho} \left[ D_W(\param_{l}, \param_{j-\psi_j}) W(X_{l})
  \indic{l+1 \leq \sigma} \indic{\calA_r(k,l)} \right]$. We write
\begin{align*}
  \PE_{x,\param}^{\bs \rho} & \left[ P_{\param_{j-\psi_j}}^l
    \calH_{\param_{j-\psi_j},L_j}(X_j) \indic{j \leq \sigma}
    \indic{\calA_r(k,j-1)} \right] \\
  & = \PE_{x,\param}^{\bs \rho} \left[ P_{\param_{j-1}}
    P_{\param_{j-\psi_j}}^l \calH_{\param_{j-\psi_j},L_j}(X_{j-1}) \indic{j \leq
      \sigma}
    \indic{\calA_r(k,j-1)} \right] \\
  & \leq \PE_{x,\param}^{\bs \rho} \left[ P_{\param_{j-\psi_j}}^{l+1}
    \calH_{\param_{j-\psi_j},L_j}(X_{j-1}) \indic{j \leq \sigma}
    \indic{\calA_r(k,j-1)} \right] + C \, \mathbb{B}_{j-1,j}\eqsp,
\end{align*}
where in the last inequality, we used that
\[
\left| P_{\param_{j-\psi_j}}^l \calH_{\param_{j-\psi_j},L_j} \right|_W \indic{j
  -\psi_j < \sigma} \leq 2 \sup_{\param \in \calK} \left|
  \H{\param}{}\right|_W \, \, \sup_{l \geq 1} \sup_{\param \in \calK} \left|
  P_{\param}^l W \right|_W
\]
which is finite by \Cref{hyp:PhiandH} and
\Cref{hyp:noyau}-\ref{hyp:noyau:ergo:geom}. Since $\indic{j \leq \sigma}
\indic{\calA_r(k,j-1)} \leq \indic{j-1 \leq \sigma} \indic{\calA_r(k,j-2)}$, we
have by a trivial induction
\begin{equation*}
  \PE_{x,\param}^{\bs \rho} \left[ P_{\param_{j-\psi_j}}^l
    \calH_{\param_{j-\psi_j},L_j}(X_j) \indic{j \leq \sigma}
    \indic{\calA_r(k,j-1)} \right]  \leq \mathbb{A}_{l,j} + C\, \sum_{i=1}^{\psi_j-1} \mathbb{B}_{j-i,j} \eqsp,
\end{equation*}
where $\mathbb{A}_{l,j} \eqdef \PE_{x,\param}^{\bs \rho} \left[ P_{\param_{j-\psi_j}}^{l+\psi_j}
    \calH_{\param_{j-\psi_j},L_j}(X_{j-\psi_j}) \indic{j-\psi_j+1 \leq   \sigma}
    \indic{\calA_r(k,j-\psi_j)} \right]$. Finally, using again $\left|\calH_{\vartheta,L} \right|_W \indic{\vartheta \in \calK} \leq 2 \sup_{\param \in \calK} |\H{\param}{}|_W$ and
\Cref{hyp:noyau}-\ref{hyp:noyau:ergo:geom}, we obtain
\begin{equation*}
\mathbb{A}_{l,j} \leq C \, \cgeom^{l + \psi_j} W(x)+ \sup_{\param \in
    \calK} \pi_\param \calH_{\param,L_j} \eqsp.
\end{equation*}
By \Cref{hyp:H:deltaH}, the last term in the RHS is upper bounded by
$L_j^\alpha$. Combining the above inequalities and using
(\ref{eq:definition:Cstar}), we have
\[
\PE_{x,\param}^{\bs \rho}\left[ \Xi_k^{(2)} \right] \leq C \sum_{j \geq k}
\rho_{j} \psi_j \sum_{i=1}^{\psi_j-1} \mathbb{B}_{j-i,j} + C_\star \sum_{j \geq k} \rho_{j}
\psi_j L_j^\alpha + C \mathbb{W}_{x,\param} \sum_{j \geq k} \rho_{j}
\cgeom^{\psi_j} \eqsp.
\]
The result follows upon noting that $\mathbb{B}_{l,j} \leq W(x)
(l-j+\psi_j) \rho_{j-\psi_j}^{r \pa}$.
\end{enumerate}
 \end{proof}

\section{Proofs of \Cref{section:examples}}
\label{section:proofs-examples}
 \subsection{Proof of  \Cref{ex:quantile:d1}}
\label{sec:proof:quantile:d1}
Let $\thetaset = \Rset$. From (\ref{H:quantiles}), $\sup_{\param \in \thetaset}
\sup_{x \in \Rset} \norm{\H{\param}{x}} \leq 1$ so that \Cref{hyp:PhiandH} is
satisfied with the constant function $W = 1$.

We have $ \norm{\H{\param_1}{x} - \H{\param_2}{x}} = \indic{\param_1 \wedge
  \param_2 \leq \phi(x) < \param_1 \vee \param_2}$ and under the stated assumptions,
\[
\sup_{\param \in \Rset} \int \sup_{\param' \in \calB(\param, \delta)}
\norm{\H{\param}{x} - \H{\param'}{x}} \pi(\rmd x) \leq \sup_{\param \in \Rset}
\int \indic{\param - \delta \leq \phi(y) \leq \param + \delta} \pi(y) \rmd y
\leq C \delta \eqsp.
\]
Therefore, \Cref{hyp:H:deltaH} is satisfied with $\alpha=1$.

Since the weak derivative of $\param \mapsto \norm{\param-x}$ is
$\mathrm{sign}(\param-x)$ almost-everywhere, the dominated convergence theorem
implies that $w$ is differentiable and its derivative is
\begin{align} \label{eq:quantile:expressionw'}
  w'({\param}) = \frac{1}{2} \left( \int \un_{\phi(y) \leq \theta} \pi(y) \rmd
    y - \int \un_{\phi(y) \geq \theta} \pi(y) \rmd y \right) + \left(
    \frac{1}{2} - \quant \right) = \int \un_{\phi(y) \leq \theta} \pi(y) \rmd y
  - \quant \eqsp;
\end{align}
we also have $w$ continuously differentiable. Since $\int \norm{\phi(y)} \pi(y)
\rmd y < \infty$,
\begin{align*}
  w(\param) \geq \frac{\norm{\param}}{2} + \left( \frac{1}{2} - \quant \right)
  \param - \frac{1}{2} \int \norm{\phi(y)} \pi(y) \rmd y
  \underset{\norm{\param} \to \infty}{\longrightarrow} \infty \eqsp;
\end{align*}
since $w$ is continuous, this implies that the level sets of $w$ are compact,
thus showing \Cref{hyp:LyapunovFunction}-\ref{hyp:LyapunovFunction:compacity}
holds. By definition of $h$ (see (\ref{eq:meanfield})), we have $h({\param}) =
- w'(\param)$.  Therefore, the set $\calL$ in
\Cref{hyp:LyapunovFunction}-\ref{hyp:LyapunovFunction:calL} is given by
(\ref{L:quantiles}) and it is compact. In addition,
\Cref{hyp:LyapunovFunction}-\ref{hyp:LyapunovFunction:scal} is satisfied.
Finally, $w(\param)$ reaches its minimum at $\param_{\star} \in \calL$ (see
\eqref{eq:quantile:expressionw'}). Since the Lyapunov function $w$ is defined
up to an additive constant, we can assume with no loss of generality that $w$
is non-negative, which concludes the proof of \Cref{hyp:LyapunovFunction}. \\
Note that $w$ is constant on $\calL$ since $w'(\param)=0$ for any $\param \in
\calL$ and $\calL$ is an interval. Hence $w(\calL)$ has an empty interior.

\subsection{Proof of \Cref{prop:convergence-ACE}}
\label{sec:proof:prop:convergence-ACE}
Let $\calK_1$ and $\calK_2$ be resp. a compact of $\Rset$ and $\mathcal{V}$;
set $\calK \eqdef \calK_1 \times \calK_2$.  Let $\tau >0$ be such that $C
\tau^\alpha \leq \delta$ where $C,\delta$ are given by \Cref{hyp:ACE}.  For any
$\vartheta =(\theta,s) \in \calK $, $\vartheta' = (\theta',s')$ with $| \theta-
\theta'| \leq \tau$ and $\norm{s - s'} \leq \tau$, and $x=(y,z) \in \X \times
\X$ it holds
 \begin{align*}
   \norm{\H{\vartheta}{x} - \H{\vartheta'}{x}} & \leq \norm{s- s'} +
   \frac{\mu(z)}{g_{\hat \nu(s)}(z)} \left\{ \norm{\indic{\phi(z) \geq \theta}
       - \indic{\phi(z) \geq \theta'}} + \left| 1 - \frac{g_{\hat
           \nu(s)}(z)}{g_{\hat \nu(s')}(z)}\right| \right\} \\
   & \leq \tau + \frac{\mu(z)}{g_{\hat \nu(s)}(z)} \left\{ \indic{\theta \wedge
       \theta' \leq \phi(z) \leq \theta \vee \theta'} + \psi(s, s', z) \,
     \exp\left( \psi(s, s', z) \right) \right\} \eqsp,
 \end{align*}
 where $\psi(s,s', z) \eqdef \left| B(\hat \nu(s)) - B(\hat \nu(s')) \right| +
 \norm{\hat \nu(s) - \hat \nu(s')} \norm{S(z)}$.  By \Cref{hyp:ACE}, there
 exists a constant $C$ - depending only upon $\calK$ - such that for any
 $\vartheta \in \calK$
\begin{align*}
  \int \pi_{\vartheta}(\rmd x) \sup_{|\theta - \theta'| \leq \tau}
  \norm{\H{\vartheta}{x} - \H{\vartheta'}{x}} \leq \tau + C \tau^\alpha \eqsp.
\end{align*}

\subsection{Proofs of \Cref{sec:quantile:median}}
\label{sec:proof:quantile:median}
\begin{lemma}
 \label{lem:ControlMoment} Under \Cref{hyp:quantile:median},  for any $0 \leq  \kappa < d$, $\sup_{\param \in \Rset^d}  \int \norm{x-\param}^{-\kappa} \pi(x) \rmd x<
 \infty$.
\end{lemma}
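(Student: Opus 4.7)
The plan is to reduce to a uniform estimate by splitting the integral at the singularity. Write, for any $\param \in \Rset^d$,
\[
\int \norm{x-\param}^{-\kappa} \pi(x) \, \rmd x = \int_{\calB(\param,1)} \norm{x-\param}^{-\kappa} \pi(x) \, \rmd x + \int_{\calB(\param,1)^c} \norm{x-\param}^{-\kappa} \pi(x) \, \rmd x.
\]
On $\calB(\param,1)^c$ the integrand is bounded by $1$, so this piece is at most $\int \pi(x)\,\rmd x = 1$, uniformly in $\param$. On $\calB(\param,1)$, use that $\pi$ is bounded (by \Cref{hyp:quantile:median}) to dominate $\pi(x)\leq \|\pi\|_\infty$, then change variables $y = x-\param$ to obtain the $\param$-free bound
\[
\int_{\calB(\param,1)} \norm{x-\param}^{-\kappa} \pi(x) \, \rmd x \leq \|\pi\|_\infty \int_{\calB(0,1)} \norm{y}^{-\kappa} \, \rmd y.
\]

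The key (and only nontrivial) step is to check finiteness of this last integral. Passing to polar coordinates yields
\[
\int_{\calB(0,1)} \norm{y}^{-\kappa} \, \rmd y = \omega_{d-1} \int_0^1 r^{d-1-\kappa} \, \rmd r = \frac{\omega_{d-1}}{d-\kappa},
\]
where $\omega_{d-1}$ denotes the surface area of the unit sphere in $\Rset^d$. This is finite precisely because $\kappa < d$, which is exactly the hypothesis. Combining both pieces gives a bound independent of $\param$, proving the claim. There is no real obstacle here; the only thing to watch is that the hypothesis $\kappa < d$ (strict) is used to ensure the radial integral $\int_0^1 r^{d-1-\kappa}\,\rmd r$ converges.
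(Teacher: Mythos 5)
Your proof is correct: both pieces of your decomposition are bounded uniformly in $\param$, and you use exactly the hypotheses of \Cref{hyp:quantile:median} (boundedness of $\pi$ near the singularity, $\int \pi = 1$ away from it) together with the strict inequality $\kappa < d$ at the only place it is needed. Your route differs from the paper's, though. The paper does not split the domain of integration; instead it uses the layer-cake representation $\int \norm{x-\param}^{-\kappa} \pi(x)\,\rmd x = \int_0^{\infty} \rmd t \int_{\{x : \norm{x-\param}^{\kappa} \leq 1/t\}} \pi(x)\,\rmd x$, bounds the contribution of $t \leq 1$ by $1$ (since $\pi$ is a probability density), and for $t \geq 1$ bounds $\pi$ by $\sup_{x} \pi(x)$ so that the inner integral becomes the Lebesgue volume of a ball of radius $t^{-1/\kappa}$, which scales as $t^{-d/\kappa}$; finiteness then comes from $\int_1^{\infty} t^{-d/\kappa}\,\rmd t < \infty$, i.e.\ $d/\kappa > 1$, which is again $\kappa < d$. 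The two arguments are dual to one another: you slice the domain at the unit ball and integrate the singularity in polar coordinates, whereas the paper slices the range of the integrand and only needs the scaling of the volume of Euclidean balls (no polar coordinates). Both yield translation-invariant, hence $\param$-uniform, bounds; yours is arguably the more transparent computation, while the paper's layer-cake bound generalizes with no change to settings where an explicit polar-coordinate evaluation is unavailable.
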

\begin{proof}
  Let $0<\kappa < d$.
 \begin{align*}
   \int \norm{x-\param}^{-\kappa} \pi(x) \rmd x &= \int_0^{+\infty}  \rmd t \int_{\{x:  \norm{x-\param}^{\kappa} \leq 1/t \}} \pi(x) \rmd x \\
   & \leq 1 + \sup_{x \in \Rset^d} \pi(x) \ \int_{1}^{+\infty} \rmd t
   \sup_{\param \in \Rset^d} \int_{\{x: \norm{x-\param} \leq t^{-\kappa} \}}
   \rmd x  \\
& \leq 1  + C  \int_1^{+\infty}  t^{-d/\kappa}  \rmd t\eqsp,
 \end{align*}
 for a finite constant $C$, which does not depend on $\param$.
\end{proof}

\begin{proof}[\Cref{prop:example-quantile}]
As $\norm{\H{\param}{x}} = 1$, \Cref{hyp:PhiandH} is satisfied with the constant function $W = 1$.
By \cite[Lemma~19-(ii)]{arcones:1998}, there exists $C$ such that for any
$x,\param,\incr$ with $x \neq \param$, we have
\[
\norm{\norm{x-\param+\incr} - \norm{x -\param} + \pscal{\incr}{\frac{x-\param}{\norm{x-\param}}}} \leq C  \frac{\norm{\incr}^2}{\norm{x-\param}}
\]
Since $\sup_{\param \in \thetaset} \int \norm{x-\param}^{-1} \pi(x) \rmd x <
\infty$ (see \Cref{lem:ControlMoment} below), then this inequality implies that
$w$ is differentiable and $\nabla w(\param) = - \int (x-\param)/\norm{x-\param}
\pi(x) \rmd x$.  The dominated convergence theorem implies that $w$ is
continuously differentiable.
\Cref{hyp:LyapunovFunction}-\ref{hyp:LyapunovFunction:compacity} follows from
the lower bound $ w(\param) \geq \norm{ \param} - \int \norm{x} \pi(x) \rmd x$
and the continuity of $w$.  We have $\nabla w =-h$ from which
\Cref{hyp:LyapunovFunction}-\ref{hyp:LyapunovFunction:scal} trivially follows.
Finally, by \Cref{hyp:quantile:median} and \cite{milasevic:ducharme:1987},
$\calL$ contains a single point, and
\Cref{hyp:LyapunovFunction}-\ref{hyp:LyapunovFunction:calL} is satisfied.

Let $\param, \param' \in \thetaset$. For any $x \notin \{\param, \param' \}$,
  \begin{align*}
    \norm{\H{\param'}{x}-\H{\param}{x}} &=
    \norm{\frac{x-\param'}{\norm{x-\param'} \norm{x-\param}}
      \left(\norm{x-\param} - \norm{x-\param'}\right) + \frac{\param -
        \param'}{\norm{x-\param}}} \leq 2 \frac{\norm{ \param'-
        \param}}{\norm{x-\param}} \eqsp.
\end{align*}
Define $\calH_{\param,\delta}(x) \eqdef \sup_{\param' \in
  \mathcal{B}(\param,\delta)} \norm{\H{\param'}{x}-\H{\param}{x}}$.  Let
$0<\beta<1/d$. Then
\begin{align*}
  &  \int \pi(x) \calH_{\param,\delta} (x) \rmd x = \int_{x \in \mathcal{B}(\param,\delta+\delta^{\beta})} \pi(x) \calH_{\param,\delta}(x) \rmd x + \int_{x \notin \mathcal{B}(\param,\delta+\delta^{\beta})} \pi(x) \calH_{\param,\delta} (x) \rmd x \\
  & \leq 2 \sup_{x\ in \Rset^d} \pi(x) \, \int_{x \in \mathcal{B}(\param,\delta+\delta^{\beta})}  \rmd x + 2 \int_{x \notin \mathcal{B}(\param,\delta+\delta^{\beta})}  \sup_{\param' \in \mathcal{B}(\param,\delta)} \frac{\norm{ \param' - \param}}{\norm{x-\param}} \pi(x) \rmd x \\
  & \leq C \delta^{\beta d} + 4 \delta^{1-\beta} \eqsp,
\end{align*}
for a constant $C$ which is finite by \Cref{hyp:quantile:median}. Hence, and
\Cref{hyp:H:deltaH} is satisfied with $\alpha = (\beta d) \wedge (1-\beta) <
1$.  This holds true with $\beta =1/(1+d)$ for which $\beta d = 1-\beta$.
\end{proof}

\subsection{Proofs of \Cref{sec:vectquant}}
\label{proof:sec:vectquant}

We start with a preliminary lemma which gives a control on the intersection of
two Voronoi cells associated with  $\btheta, \bar{\btheta}
\in (\Rset^d)^N$.
\begin{lemma}
\label{lemme:inter:voronoi}
For any compact set $\calK$ of $\Theta$, there exists $\delta_{\calK}>0$ such
that for any $\btheta \in \calK$ and any $i \neq j$:
\begin{enumerate}[label=(\roman*)]
 \item  \label{lemme:inter:voronoi:angle}
 \begin{equation*} \sup_{\delta \leq
       \delta_K} \frac{1}{\sqrt{\delta}} \sup_{\bar{\btheta} \in \calB(\btheta,
       \delta)^N \cap \Theta} \norm{ \frac{\bar{\theta}^{(j)} - \bar{\theta}^{(i)}}{\norm{ \bar{\theta}^{(j)}
         - \bar{\theta}^{(i)} }} - \frac{\theta^{(j)} - \theta^{(i)}}{\norm{ \theta^{(j)} - \theta^{(i)}
         }} }< \infty \eqsp.
 \end{equation*}
\item \label{lemme:inter:voronoi:rect} for any $\delta \leq \delta_\calK$,
  there exists a measurable set $R_{i,j}(\btheta,\delta)$ such that
  \begin{align*}
  &\sup_{\bar{\btheta} \in \calB(\btheta, \delta) \cap \Theta} \indic{ C_i(\btheta) \cap C^{(j)}(\bar{\btheta}) \cap \calB(0,\Delta)} \leq \indic{R_{i,j}(\btheta,\delta)} \eqsp, \\
  &\sup_{\delta \leq \delta_K} \frac{1}{\sqrt{\delta}}  \int  \indic{R_{i,j}(\btheta,\delta)}(x) \,  \rmd x < \infty \eqsp.
  \end{align*}
 \end{enumerate}
\end{lemma}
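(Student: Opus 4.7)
The whole argument rests on a uniform lower bound for the minimal inter-centroid distance on the compact set $\calK$. Since $\btheta \mapsto \min_{i \neq j} \norm{\theta^{(i)} - \theta^{(j)}}$ is continuous and strictly positive on $\Theta$, and since $\calK \subset \Theta$ is compact, there exists a constant $c_\calK > 0$ such that $\min_{i \neq j} \norm{\theta^{(i)} - \theta^{(j)}} \geq c_\calK$ for every $\btheta \in \calK$ (this is consistent with the construction $\calK_q = \{\btheta : \min_{i \neq j} \norm{\theta^{(i)} - \theta^{(j)}} \geq 1/q\}$). I would set $\delta_\calK \eqdef c_\calK / 4$, so that any $\bar\btheta$ with $\norm{\bar\btheta - \btheta} \leq \delta \leq \delta_\calK$ still satisfies $\min_{i \neq j} \norm{\bar\theta^{(i)} - \bar\theta^{(j)}} \geq c_\calK / 2$.

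For part \ref{lemme:inter:voronoi:angle}, write $v \eqdef \theta^{(j)} - \theta^{(i)}$ and $\bar v \eqdef \bar\theta^{(j)} - \bar\theta^{(i)}$, so that $\norm{\bar v - v} \leq 2\delta$ while $\norm{v} \wedge \norm{\bar v} \geq c_\calK/2$. The elementary inequality
\[
\norm{\frac{\bar v}{\norm{\bar v}} - \frac{v}{\norm{v}}} \leq \frac{2 \norm{\bar v - v}}{\norm{v} \wedge \norm{\bar v}} \leq \frac{8 \delta}{c_\calK}
\]
yields an $O(\delta)$ bound, hence the quantity divided by $\sqrt{\delta}$ is at most $(8/c_\calK) \sqrt{\delta_\calK}$, which is finite.

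For part \ref{lemme:inter:voronoi:rect}, the idea is to realise the intersection as (a subset of) a thin slab. A point $x \in C^{(i)}(\btheta)$ satisfies $\pscal{x}{v} \leq c \eqdef \tfrac{1}{2}(\norm{\theta^{(j)}}^2 - \norm{\theta^{(i)}}^2)$, and a point $x \in C^{(j)}(\bar\btheta)$ satisfies $\pscal{x}{\bar v} \geq \bar c \eqdef \tfrac{1}{2}(\norm{\bar\theta^{(j)}}^2 - \norm{\bar\theta^{(i)}}^2)$. For $x \in \calB(0,\Delta)$, the bounds $\norm{\bar v - v} \leq 2\delta$ and $|c - \bar c| \leq 2\Delta \delta$ (using $\Theta \subset \calB(0,\Delta)^N$) combine with $|\pscal{x}{\bar v - v}| \leq 2\Delta\delta$ to give $\pscal{x}{v} \geq c - 4\Delta\delta$. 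Thus, setting $K \eqdef 4\Delta$, the set
\[
R_{i,j}(\btheta, \delta) \eqdef \bigl\{ x \in \calB(0,\Delta) : c - K \delta \leq \pscal{x}{v} \leq c \bigr\}
\]
contains $C^{(i)}(\btheta) \cap C^{(j)}(\bar\btheta) \cap \calB(0,\Delta)$ for every admissible $\bar\btheta$. Its Lebesgue measure is bounded by that of a slab of width $K\delta/\norm{v} \leq 2K\delta/c_\calK$ along the direction $v/\norm{v}$ intersected with $\calB(0,\Delta)$, which is at most $(2K\delta/c_\calK)\, \omega_{d-1} \Delta^{d-1}$, uniformly over $\btheta \in \calK$. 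This $O(\delta)$ bound is in particular $O(\sqrt\delta)$.

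There is no real obstacle here: the only content is the uniform separation $c_\calK$, which turns both statements into routine estimates on bilinear forms and on the volume of a slab in $\calB(0,\Delta)$. The slightly weaker $\sqrt{\delta}$ scaling stated in the lemma is presumably chosen to match the rate actually used downstream in the proof of \Cref{prop:qv:cv} rather than because it is sharp.
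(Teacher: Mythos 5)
Your proof is correct, but it follows a genuinely different (and in fact sharper) route than the paper's. Both arguments start identically, from the uniform separation $\min_{i\neq j}\norm{\theta^{(i)}-\theta^{(j)}}\geq c_\calK>0$ on $\calK$ and a choice of $\delta_\calK$ proportional to $c_\calK$. The divergence is in how the two halfspace conditions are compared. For part \ref{lemme:inter:voronoi:angle}, you invoke the standard normalization inequality $\norm{\bar v/\norm{\bar v}-v/\norm{v}}\leq 2\norm{\bar v-v}/(\norm{v}\wedge\norm{\bar v})$, which gives an $O(\delta)$ bound; the paper instead estimates the inner product $\pscal{n}{n'}\geq 1-4\delta/b_\calK$ and uses $\norm{n-n'}^2=2(1-\pscal{n}{n'})$, which only yields $O(\sqrt{\delta})$. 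For part \ref{lemme:inter:voronoi:rect}, you characterize the cells by the \emph{un-normalized} bisector conditions $\pscal{x}{v}\leq c$ and $\pscal{x}{\bar v}\geq \bar c$ with $c=\tfrac12(\norm{\theta^{(j)}}^2-\norm{\theta^{(i)}}^2)$, and perturb $c$ and the bilinear form directly ($|c-\bar c|\leq 2\Delta\delta$, $|\pscal{x}{\bar v-v}|\leq 2\Delta\delta$, both legitimate since $\Theta\subset(\calB(0,\Delta))^N$ and $\bar\btheta\in\Theta$), obtaining a slab of width $O(\delta)$. The paper instead centers at $\theta^{(i)}$, projects onto the unit normal $n$, and must convert the condition on $C^{(j)}(\bar{\btheta})$ from direction $n'$ to direction $n$, paying a penalty $2\Delta\norm{n-n'}=O(\sqrt{\delta})$ inherited from its part \ref{lemme:inter:voronoi:angle}; this is precisely where the $\sqrt{\delta}$ normalization in the statement of the lemma comes from. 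So your closing remark is right in spirit but inverted in attribution: the $\sqrt{\delta}$ rate is not a downstream design choice, it is simply what the paper's own estimate produces, and your argument shows it is not sharp. Your linear bound would in fact propagate: in the proof of \Cref{prop:qv:cv} it would give \Cref{hyp:H:deltaH} with $\alpha=1$ instead of $\alpha=1/2$, which weakens the step-size constraint in \Cref{hyp:sommes:pas:poly}. The only cosmetic points to tidy are that the constant $\omega_{d-1}$ (volume of the unit $(d-1)$-ball) should be introduced explicitly, and that your slab-volume bound should be stated as uniform over $\btheta\in\calK$ and $i\neq j$, which it is since it depends only on $c_\calK$, $\Delta$ and $d$.
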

\begin{proof}
  Let $\calK$ be a compact set of $\Theta$. The function on $(\Rset^d)^N$ given
  by $\btheta \mapsto \min_{i \neq j} \norm{ \theta^{(i)} - \theta^{(j)} }$ is
  continuous.  Since $\calK$ is a compact subset of $\Theta$, there exists
  $b_{\calK}>0$ such that for any $\btheta \in \calK$, $ \min_{i \neq j} \norm{
    \theta^{(i)} - \theta^{(j)} } \geq b_{\calK}$.  Choose $\delta_{\calK} \in (0,
  b_\calK /2 \wedge 1)$.  Let $i \neq j \in \{1, \cdots, N \}$ and $\btheta \in
  \calK$ be fixed.  For any $\delta \leq \delta_\calK$ and $\bar{\btheta} \in
  \calB(\btheta, \delta)$, it holds
\begin{align}
  \norm{ \bar{\theta}^{(j)} - \bar{\theta}^{(i)} } & \geq \norm{ \theta^{(j)} -
    \theta^{(i)}} - \norm{\bar{\theta}^{(j)} -
    \theta^{(j)} }- \norm{ \bar{\theta}^{(i)} -\theta^{(i)}}  \geq \norm{ \theta^{(j)} - \theta^{(i)}}- 2 \delta \label{eq:qv:minortheta'} \\
  & \geq b_\calK - 2 \delta >0 \eqsp. \nonumber
\end{align}
Similarly,
\begin{equation}
  \label{eq:qv:majortheta'}
   \norm{\bar{\theta}^{(j)} - \bar{\theta}^{(i)} } \leq \norm{ \theta^{(j)} - \theta^{(i)}} + 2 \delta \eqsp.
\end{equation}
Define $n = (\theta^{(j)} - \theta^{(i)})/\norm{ \theta^{(j)} - \theta^{(i)} }$ and  $n' = (\bar{\theta}^{(j)} - \bar{\theta}^{(i)})/\norm{ \bar{\theta}^{(j)} -\bar{\theta}^{(i)}}$.
\begin{enumerate}[label=(\roman*), wide=0pt, labelindent=\parindent]
\item We have $\norm{n - n'}^2 = 2 \left(1 - \pscal{n}{n'} \right)$. In
  addition, for any $\delta \leq \delta_\calK$ and $\bar{\btheta} \in \calB(\btheta,
  \delta)$,
\begin{align*}
  \pscal{n}{n'} &= \norm{\theta^{(j)} - \theta^{(i)} }^{-1} \pscal{\theta^{(j)}
    -\theta^{(i)}}{n'}  \\
  & = \norm{\theta^{(j)} - \theta^{(i)} }^{-1} \pscal{ \norm{
      \bar{\theta}^{(j)} -\bar{\theta}^{(i)} } n'+
    \theta^{(j)} - \bar{\theta}^{(j)} + \bar{\theta}^{(i)} - \theta^{(i)}}{n'}  \\
  & \geq \frac{\norm{\bar{\theta}^{(j)} - \bar{\theta}^{(i)}}}{ \norm{
      \theta^{(j)} - \theta^{(i)} }} - \frac{2 \delta}{\norm{\theta^{(j)} -
      \theta^{(i)} }} \geq 1 - \frac{4 \delta}{ \norm{\theta^{(j)} -
      \theta^{(i)}}} \geq 1 - \frac{4 \delta}{b_{\calK}} \eqsp,
\end{align*}
where we used (\ref{eq:qv:minortheta'}) in the last equation.
Therefore
\begin{equation}
\label{eq:qv:upperbound:cosinus}
\norm{n -n'}^2 \leq 8 \delta / b_{\calK} \eqsp,
\end{equation}
\item Let $x \in C^{(i)}(\btheta)$. We write $ x - \theta^{(i)} = \pscal{x-\theta^{(i)}}{n} n + m $ where $\pscal{m}{n} =0$.
Using $ \norm{x-\theta^{(i)}}^2 = \left|\pscal{x-\theta^{(i)}}{n} \right|^2 +
\norm{m }^2$ and
$ x - \theta^{(j)} = \pscal{x-\theta^{(i)}}{n} n - \norm{ \theta^{(i)} - \theta^{(j)}} n + m $
we get
$$
\norm{x - \theta^{(j)}}^2 = \left|\pscal{x-\theta^{(i)}}{n} - \norm{ \theta^{(i)} - \theta^{(j)}}
\right|^2 + \norm{m}^2 \eqsp.
$$
Since $x \in C^{(i)}(\btheta)$, $ \norm{ x - \theta^{(i)} }
\leq \norm{x - \theta^{(j)}}$ so that $\left|\pscal{x-\theta^{(i)}}{n} \right|^2 \leq
\left|\pscal{x-\theta^{(i)}}{n} - \norm{ \theta^{(i)} - \theta^{(j)}} \right|^2$.  This
implies that $\pscal{x-\theta^{(i)}}{n} \leq \norm{\theta^{(j)} - \theta^{(i)} }/2$.
Therefore,
\begin{align*}
 C^{(i)}(\btheta) \subset \left\{ x \in \Rset^d, \pscal{x - \theta^{(i)}}{n} \leq \frac{1}{2} \norm{ \theta^{(j)} - \theta^{(i)} } \right\} \eqsp.
\end{align*}
Let now $x \in C_{j}(\bar{\btheta}) \cap \calB(0,\Delta)$. Following the same lines as
above and using (\ref{eq:qv:majortheta'})
\begin{align}
\label{eq:qv:surCj}
\pscal{x -\bar{\theta}^{(j)}}{n'} \geq - \frac{1}{2} \norm{ \bar{\theta}^{(j)} - \bar{\theta}^{(i)}} \geq
-\frac{1}{2} \norm{\theta^{(j)} - \theta^{(i)} } - \delta\eqsp.
\end{align}
Moreover
\begin{align*}
  \pscal{x - \theta^{(i)}}{n}
  &= \pscal{x-\theta^{(i)}}{n-n'} + \pscal{x-\bar{\theta}^{(j)}}{n'} + \pscal{\bar{\theta}^{(j)}-\bar{\theta}^{(i)}}{n'} + \pscal{\bar{\theta}^{(i)}-\theta^{(i)}}{n'}  \\
  &= \pscal{x-\theta^{(i)}}{n-n'} + \pscal{x-\bar{\theta}^{(j)}}{n'} + \norm{\bar{\theta}^{(j)}-\bar{\theta}^{(i)}} + \pscal{\bar{\theta}^{(i)}-\theta^{(i)}}{n'} \eqsp.
\end{align*}
Since $x, \theta^{(i)} \in \calB(0, \Delta)$, we have by (\ref{eq:qv:minortheta'}),
(\ref{eq:qv:upperbound:cosinus}) and (\ref{eq:qv:surCj})
\begin{align*}
  \pscal{x - \theta^{(i)}}{n}
  & \geq - 2 \Delta \norm{n-n'} - \frac{1}{2} \norm{ \theta^{(j)} - \theta^{(i)}} - \delta + \norm{\theta^{(j)}-\theta^{(i)}} - 2 \delta -  \delta \\
  & \geq \frac{1}{2} \norm{ \theta^{(j)} - \theta^{(i)}} - 4 \delta -4 \Delta
  \sqrt{2/b_{\calK}} \sqrt{\delta} \eqsp.
\end{align*}
Therefore,
\begin{align*}
  C^{(j)}(\bar{\btheta}) \cap \calB(0, \Delta) \subset \left\{ x \in \Rset^d, \pscal{x -
      \theta^{(i)}}{n} \geq \frac{1}{2} \| \theta^{(j)} - \theta^{(i)} \| - 4 \delta -4 \Delta \sqrt{2/b_{\calK}} \sqrt{ \delta } \right\} \eqsp.
\end{align*}
Hence,
\begin{multline*}
C^{(i)}(\btheta) \cap C^{(j)}(\bar{\btheta}) \cap \calB(0, \Delta) \\
 \subset \left\{ x \in \calB(0,
    \Delta), \frac{1}{2} \norm{ \theta^{(j)} - \theta^{(i)} } - 4 \delta -4 \Delta \sqrt{2/b_{\calK}}
    \sqrt{\delta} \leq \pscal{x - \theta^{(i)}}{n} \leq \frac{1}{2} \norm{ \theta^{(j)} -
    \theta^{(i)} } \right\} \eqsp.
\end{multline*}
Finally, since $\delta_\calK < 1$, we have $\delta \leq \sqrt{\delta}$, and
this concludes the proof, by noticing that this last set is independent of
$\bar{\btheta}$.
\end{enumerate}
\end{proof}

\begin{proof}[Proof of \Cref{prop:qv:cv}]
  For any compact set $\calK \subset \thetaset$, there exists $C$ such that
  $\sup_{\btheta \in \calK} \norm{\H{\btheta}{u}} \leq C (\norm{u}+1)$.
  Therefore, \Cref{hyp:PhiandH} is satisfied with $W(u) = 1 + \norm{u}$. \\
  $w$ is nonnegative and continuously differentiable on $\thetaset$; since
  $\nabla w = - h$,
  \Cref{hyp:LyapunovFunction}-\ref{hyp:LyapunovFunction:scal} is satisfied. \\
  We now prove \Cref{hyp:LyapunovFunction}-\ref{hyp:LyapunovFunction:calL}; the
  proof is by contradiction.  Assume that $\calL$ is not included in a level
  set of $w$: then there exists a sequence $\{\btheta_q, q \geq 1 \}$ of
  $\calL$ such that $\lim_q w(\btheta_q) = + \infty$. Since $\widetilde w$ is
  bounded on $(\calB(0, \Delta))^N$, then $\lim_q \sum_{i \neq j}
  \norm{\theta^{(i)}_q - \theta^{(j)}_q}^{-2} = +\infty$ which implies that
  there exist a subsequence (still denoted $\{\theta_q, q \geq 1 \}$) and
  indices $i \neq j$ such that $\lim_q \norm{\theta_q^{(i)} - \theta_q^{(j)}} =
  0$. Since $\calL$ is closed, we proved that there exists a point $\lim_q
  \btheta_q$ in $\calL$ such that $\lim_q \theta_q^{(i)} = \lim_q
  \theta_q^{(j)}$. This is a
  contradiction since $\calL \subset \Theta$. \\
  Let us prove \Cref{hyp:H:deltaH}.  Let $\calK \subset \thetaset$ be a compact
  set.  We write
  \begin{align*}
    \norm{ \H{\btheta}{x} - \H{\bar{\btheta}}{x}} \leq \norm{ \tH{\btheta}{x} -
      \tH{\bar{\btheta}}{x}} + \lambda \sum_{i=1}^N \sum_{j \neq i}\left(
      \norm{\frac{\theta^{(i)} - \theta^{(j)}}{\norm{\theta^{(i)} -
            \theta^{(j)}}^4} - \frac{\bar \theta^{(i)} - \bar
          \theta^{(j)}}{\norm{\bar \theta^{(i)} - \bar \theta^{(j)}}^4}}
    \right) \eqsp.
  \end{align*}
  Since $\calK$ is a compact of $\Theta$, there exists a constant $C$ such
  that
\[
\norm{ \H{\btheta}{x} - \H{\bar{\btheta}}{x}} \leq \norm{ \tH{\btheta}{x} -
      \tH{\bar{\btheta}}{x}} + C \norm{\btheta - \bar{\btheta}} \eqsp.
\]
For any $\btheta,\bar{\btheta} \in \calK$ and any $x \in \Rset^d$,
\begin{multline*}
\norm{\tH{\bar \param}{x} - \tH{\param}{x}}^2/4 = \sum_{i=1}^N \left[ \norm{ \bar{\theta}_i - \theta^{(i)} }^2 \indic{C^{(i)}(\btheta) \cap C^{(i)}(\bar{\btheta})}(x) \right. \\
\left. + \norm{ \theta^{(i)} - x}^2 \indic{C_{i}(\btheta)
      \cap C^{(i)}(\bar{\btheta})^c}(x) + \norm{ \bar{\theta}^{(i)} - x}^2
    \indic{C^{(i)}(\btheta)^c \cap C^{(i)}(\bar{\btheta})}(x)\right] .
\end{multline*}
Therefore, for any $x \in \calB(0,\Delta)$, any $\btheta \in \calK$, and any
$\bar{\btheta} \in \calB(0,\delta)$,
\begin{align*}
  \norm{\tH{\bar{\btheta}}{x} - \tH{\btheta}{x}}/2 & \leq   \sqrt{\sum_{i=1}^N  \norm{ \bar{\theta}^{(i)} - \theta^{(i)} }^2}   +  \sum_{i=1}^N \sum_{j=1, j \neq i}^N  \norm{ \theta^{(i)} - x} \indic{C^{(i)}(\btheta) \cap C^{(j)}(\bar{\btheta})}(x) \\
  & \quad \phantom{\leq} +  \sum_{i=1}^N \sum_{j=1, j \neq i}^N  \norm{ \bar{\theta}^{(i)} - x} \indic{C^{(j)}(\btheta) \cap C^{(i)}(\bar{\btheta})}(x) \\
  & \leq \delta + 2 \Delta N^2 \sup \limits_{i \neq j} \indic{C^{(i)}(\btheta) \cap
    C^{(j)}(\bar{\btheta}) \cap \calB(0, \Delta)}(x) \eqsp.
\end{align*}
By \Cref{lemme:inter:voronoi}, there exists $\delta_{\calK}$ such that for
any $\delta \leq \delta_{\calK}$, there exist a measurable set
$R_{i,j}(\btheta,\delta)$ such that
\begin{align*}
 \sup \limits_{\bar{\btheta} \in \calB(0,\delta)} \indic{C^{(i)}(\btheta) \cap C^{(j)}(\bar{\btheta}) \cap \calB(0,\Delta)}(x) \leq \indic{R_{i,j}(\btheta,\delta)}(x) \eqsp.
\end{align*}
Therefore, $ \norm{\tH{\bar{\btheta}}{x} - \tH{\btheta}{x}}/2 \leq \delta + 2 \Delta N^2
\sup \limits_{i \neq j} \indic{R_{i,j}(\btheta,\delta)}(x)$.  Under
\Cref{hyp:qv}, $\pi$ is bounded on $\thetaset$. In addition,
\Cref{lemme:inter:voronoi} shows that
\begin{align*}
  \sup_{\delta \leq \delta_{\calK}} \frac{1}{\sqrt{\delta}} \sup_{\btheta \in
    \calK} \sup_{i \neq j} \int \indic{R_{i,j}(\btheta,\delta)}(x) \rmd x < \infty
  \eqsp.
\end{align*}
Then, there exists $C'$ such that for any $\delta \leq \delta_{\calK}$,
\begin{align*}
  \sup_{\btheta \in \mathcal{K}} \int \pi(\rmd x) \sup_{\{\bar{\btheta}, \norm{
      \bar{\btheta}- \btheta } \leq \delta \}} \norm{ \tH{\bar{\btheta}}{x} - \tH{\btheta}{x} }
  \leq C' \sqrt{\delta} \eqsp.
\end{align*}
Moreover, as $\sup_{\btheta \in \thetaset} \sup_{x \in \calB(0, \Delta)}
\norm{\tH{\btheta}{x}}< \infty$, for any $\delta \geq \delta_{\calK}$,
\begin{align*}
  \sup_{ \btheta \in \mathcal{K}} \int \pi(\rmd x) \sup_{\{\bar{\btheta}, \norm{
      \bar{\btheta}- \btheta } \leq \delta \}} \norm{\tH{\bar{\theta}}{x} - \tH{\btheta}{x} }
  \leq 2 \sup_{\Theta \times \supp(\pi)} \norm{\tH{\btheta}{x}}
  \frac{\sqrt{\delta}}{\min(1,\sqrt{\delta_{\calK}})} \eqsp.
\end{align*}
Therefore \Cref{hyp:H:deltaH} is satisfied with $\alpha =1/2$.
\end{proof}

\section*{Acknowledgments}

M.~Vihola was supported by the Academy of Finland (grants 250575 and
274740).

\bibliographystyle{plain}
\bibliography{bibliographie}

\end{document}